\titleformat{\chapter}{\normalfont\bfseries}{\Large\thechapter}{1em}{\Large}
\newtheorem{theorem}{Theorem}[section] 
\newtheorem{proposition}[theorem]{Proposition} 
\newtheorem*{proposition*}{Proposition}
\newtheorem{definition}[theorem]{Definition}
\newtheorem*{definition*}{Definition}
\newtheorem{lemma}[theorem]{Lemma}
\newtheorem*{lemma*}{Lemma}
\newtheorem{corollary}[theorem]{Corollary}
\newtheorem{remark}[theorem]{Remark}
\newtheorem{assumption}[theorem]{Assumption}
\newcommand\dist{\operatorname{dist}}
\newcommand{\SO}[1]{\operatorname{SO}(#1)}
\newcommand{\erwartung}[1]{\mathbb{E} \left[ {#1} \right]}
\newcommand{\erwartunglol}[1]{\mathbb{E}_{L} \left[ {#1} \right]}
\newcommand{\e}{_{\varepsilon}}
\newcommand{\h}{_{\mathrm{hom}}}
\newcommand{\brac}[1]{\left({#1}\right) }
\newcommand{\cb}[1]{\left\lbrace {#1} \right\rbrace}
\newcommand{\norm}[1]{\left\lVert#1\right\rVert}
\newcommand{\R}{\mathbb{R}}
\newcommand{\Z}{\mathbb{Z}}
\newcommand{\N}{\mathbb{N}}
\newcommand{\dive}{\mathrm{div}}
\newcommand{\F}{\mathcal{F}}
\newcommand{\mbbE}{\mathbb{E}}
\newcommand\Id{\operatorname{Id}}
\newcommand{\nbs}{\nobreakspace}
\newcommand{\per}{\mathrm{per}}
\newcommand{\lol}{_{L}}
\newcommand{\varft}{\varphi_{F}^{T}}
\newcommand{\varftt}{\varphi_{F}^{2T}}
\newcommand{\varfgt}{\varphi_{F,G}^{T}}
\newcommand{\varfgtt}{\varphi_{F,G}^{2T}}
\newcommand{\varfhtt}{\varphi_{F,H}^{2T}}
\newcommand{\varfht}{\varphi_{F,H}^{T}}
\newcommand{\varfght}{\varphi_{F,G,H}^T}
\newcommand{\varfghtt}{\varphi_{F,G,H}^{2T}}
\newcommand{\hatvarft}{\widehat{\varphi}_{F}^{T}}
\newcommand{\hatvarfgt}{\widehat{\varphi}_{F,G}^{T}}
\newcommand{\hatvarfht}{\widehat{\varphi}_{F,H}^{T}}
\newcommand{\hatvarfght}{\widehat{\varphi}_{F,G,H}^T}
\newcommand{\hatomega}{\widehat{\omega}}
\newcommand{\loc}{_{\mathrm{loc}}}
\newcommand{\step}[1]{\medskip\noindent\textbf{Step #1. }}
\newcommand{\substep}[1]{\medskip\noindent\textit{Substep #1. }}
\newcommand{\setword}[2]{%
  \phantomsection
  #1\def\@currentlabel{\unexpanded{#1}}\label{#2}%
}
\newcommand{\twopartdef}[4]
{
	\left\{
		\begin{array}{ll}
			#1 &  #2 \\
			#3 &  #4
		\end{array}
	\right.
}
\def\Xint#1{\mathchoice
{\XXint\displaystyle\textstyle{#1}}%
{\XXint\textstyle\scriptstyle{#1}}%
{\XXint\scriptstyle\scriptscriptstyle{#1}}%
{\XXint\scriptscriptstyle\scriptscriptstyle{#1}}%
\!\int}
\def\XXint#1#2#3{{\setbox0=\hbox{$#1{#2#3}{\int}$ }
\vcenter{\hbox{$#2#3$ }}\kern-.6\wd0}}
\def\dashint{\Xint-}
\title{Quantitative stochastic homogenization of nonlinearly elastic, random laminates}
\author[1]{Stefan Neukamm\thanks{stefan.neukamm@tu-dresden.de}} \author[2]{Mathias Sch{\"a}ffner\thanks{mathias.schaeffner@tu-dortmund.de}}
\author[1]{Mario Varga\thanks{mario.varga@tu-dresden.de}} 
\affil[1]{Fakult\"at Mathematik, Technische Universit\"at Dresden}
\affil[2]{Fakult\"at f\"ur Mathematik, Technische Universit\"at Dortmund}
\begin{document}
\maketitle

\begin{abstract}
In this paper we study quantitative stochastic homogenization of a nonlinearly elastic composite material with a laminate microstructure. We prove that  for deformations close to the set of rotations the homogenized stored energy function $W_{\hom}$ is $C^3$ and that $W_{\hom}$, the stress-tensor $DW_{\hom}$, and the tangent-moduli $D^2W_{\hom}$ can be represented with help of stochastic correctors. Furthermore, we study the error of an approximation of these quantities via representative volume elements. More precisely, we consider periodic RVEs obtained by periodizing the distribution of the random material. For materials with a fast decay of correlations on scales larger than a unit scale, we establish error estimates on the random and systematic error of the RVE with optimal scaling in the size of the RVE and with a multiplicative random constant that has exponential moments.
\end{abstract}
\setlength{\parindent}{0pt}
\tableofcontents

\section{Introduction}

A standard model for a nonlinearly elastic, composite material is given by the non-convex energy functional $\mathcal E_\varepsilon(u):=\int_O W(\tfrac{x}{\e},\nabla u)\,dx$, where $O\subseteq\R^d$ denotes the reference domain occupied by the elastic body, $u:O\to\R^d$ its deformation, and $W: \R^d\times\R^{d\times d}\to\R\cup\{+\infty\}$ the stored energy function, which encodes the mechanical properties of the material. We are interested in the macroscopic behavior of composites that oscillate on a microscopic scale $0<\varepsilon\ll 1$. Therefore, we consider the homogenization limit $\varepsilon\to 0$ of $\mathcal E_\varepsilon$ in the framework of $\Gamma$-convergence. The first results in this direction are due to Braides \cite{Braides85} and M\"uller \cite{Mueller87}: For periodic composites (that is, $x\mapsto W(x,F)$ is periodic for all $F$) and assuming additional growth conditions for $W$ (in particular, standard $p$-growth with $1<p<\infty$), they show that $\mathcal E_\varepsilon$ $\Gamma$-converges to an energy functional of the form $u\mapsto \int_O W_{\hom}(\nabla u)\,dx$ with a homogenized stored energy function given by the \textit{multi-cell formula},
\begin{equation}\label{neukamm:2}
  W_{\hom}(F):=\lim\limits_{L\to\infty}W_{\hom,L}(F),\quad W_{\hom,L}(F) := \inf_{\varphi \in W^{1,p}_{\mathrm{per}}(\Box_L; \R^d)}\dashint_{\Box_L} W(x, F+ \nabla \varphi)\, dx,
\end{equation}
where $\Box_L:=[-\frac L2,\frac L2)^d$ and $W^{1,p}_{\mathrm{per}}(\Box_L; \R^d)$ denotes the space of $L$-periodic Sobolev functions. A similar homogenization result and, in particular, formula \eqref{neukamm:2} are also valid in the random case, that is, when $x\mapsto W(x,F)$ is a stationary and ergodic random field, see \cite{DalMaso1985, DalMaso198621, messaoudi1994stochastic}.
\smallskip

The multi-cell formula for the homogenized stored energy function $W_{\hom}$ is the main subject of this paper. Its definition invokes a nonconvex minimization problem and an asymptotic limit $L\to\infty$, and, therefore, a priori not much is known about its analytic properties. $W_{\hom}$ describes the effective mechanical behavior of the composite. It is especially important to understand the \textit{first Piola-Kirchhoff stress tensor} and the \textit{tangent modulus}, i.e., the Jacobian $DW_{\hom}(F)$ and the Hessian $D^2W_{\hom}(F)$.
Therefore, it is natural and relevant to investigate the following questions:
\begin{itemize}
\item[(Q1)] Is $W_{\hom}$ twice continuously differentiable?
\item[(Q2)] How can $W_{\hom}(F)$, $DW_{\hom}(F)$ and $D^2W_{\hom}(F)$ be evaluated or approximated?
\end{itemize}
In this paper we study these questions in the regime of small, but finite strains for random laminates composed of frame indifferent materials with a stress-free, non-degenerate reference state (see Definition~\ref{def:walphap} below). In a nutshell, the main results of this paper can be summarized as follows:
\begin{itemize}
\item (Qualitative results and answer to (Q1)). We show $W_{\hom}\in C^2(U)$ for an open neighborhood $U$ of $\SO d$, see Theorem~\ref{T:1:0}~(iii). This result is based on a representation of $W_{\hom}(F)$ for $F\in U$ with help of a sublinear corrector $\varphi_F$ that solves the nonlinear corrector equation $-{\rm div}DW(x,F+\nabla\varphi_F)=0$ in $\R^d$, see  Theorem~\ref{T:1:0}~(ii). Since $W$ is non-convex, this corrector representation is highly non-trivial and extends a recent results for periodic composites by the first two authors  \cite{neukamm2018quantitative,neukamm2019lipschitz}.
\item (Quantitative results and answer to (Q2)). We prove optimal error estimates for a periodic representative volume element approximation of $W_{\hom}(F)$, $DW_{\hom}(F)$ and $D^2W_{\hom}(F)$ for $F$ close to $\SO d$, see Theorem~\ref{Theorem:2} and the discussion at the end of this introduction.
\end{itemize}

Before we present our results in detail, we discuss previous, related works.\smallskip

{\it Convex case.} We first note that a complete understanding of (Q1) is currently only available for convex integrands with quadratic growth. Indeed, as shown by M\"uller \cite{Mueller87}, if $W(x,F)$ is periodic in $x$ and convex in $F$, the multi-cell formula reduces to a \textit{one-cell formula} that can be represented with help of a \textit{corrector}, i.e., $W_{\hom}(F)=W_{\hom,1}(F)=\int_{\Box}W(y,F+\nabla\varphi_F)\,dy$, where the corrector $\varphi_F$ is a minimizer of the minimization problem in the definition of $W_{\hom,1}(F)$. Furthermore, in \cite[Theorem~5.4]{Mueller93} it is shown that, if $W(x,F)$ is additionally $C^2$ in $F$ and satisfies a quadratic growth condition in $F$, then $W_{\hom}$ is $C^2$. The argument exploits the \textit{corrector representation} of $W_{\hom}$ and only uses basic energy estimates for the corrector. It extends verbatim to the random, convex case.

To reach regularity beyond $C^2$ is a nontrivial task and requires improved regularity of the corrector. For convex potentials with quadratic growth, the corrector $\varphi_F$ can be characterized by a monotone, uniformly elliptic system with quadratic growth and thus, a stronger regularity theory is available; we refer to \cite{neukamm2018quantitative} for $C^3$-regularity of the homogenized potential in the periodic, vector-valued case, and \cite{fischer2019optimal, armstrong2020higher, clozeau2021quantitative} for related result for random, monotone equations.

{\it Non-convex case.}
The  results mentioned so far do not extend to the case of nonlinear elasticity, since there the stored energy function is necessarily non-convex.
In fact, M\"uller \cite[Theorem 4.3]{Mueller87} provides a counterexample in form of a laminate material that features a buckling instability under compressive loading; in particular, one has $W_{\hom}(F)<W_{\hom,L}(F)$ for some $F\not\in\SO d$ and all $L\in\mathbb N$. This shows that we cannot even expect a one-cell formula nor a corrector representation to hold for general deformations $F$. However, a better behaviour can be expected for deformations close to $\SO d$ for materials with a non-degenerate, stress-free reference state at the identity. Indeed, in \cite{MN11} the first author and M\"uller show for periodic composites that $W_{\hom}$ admits a quadratic expansion at identity:
\begin{equation}\label{eq:intro1}
  W_{\hom}(Id+G)=Q_{\hom}(G)+o(|G|^2)
\end{equation}
where $Q_{\hom}(G):=\inf_{\varphi \in H^{1}_{\mathrm{per}}(\Box; \R^d)}\int_{\Box} Q(x, G+ \nabla \varphi)\, dx$, and $Q(x,G):=\frac12D^2W(x,Id)[G,G]$ denotes the quadratic term in the expansion of $W$ at identity. The argument uses soft properties of the corrector $\varphi_F$ and appeals to the geometric rigidity estimate of \cite{friesecke2002theorem}. An analogous statement holds in the random case, see \cite{GN11}. Identity \eqref{eq:intro1} says that the tangent modulus of $W_{\hom}$ at  identity is given by the homogenization of the tangent modulus of $W$.
\smallskip

{\it Differentiability for small, but finite strains.} To establish an expansion similar to \eqref{eq:intro1} for $F\not\in SO(d)$ is nontrivial --- and, in fact, not always possible, cf.~M\"uller's counterexample mentioned above. Therefore, it is natural to focus on the regime of small strains, i.e., when $F$ is close to $\SO d$. The first differentiability result for $W_{\hom}$ away from $\SO d$ has been recently obtained by the first and second author in \cite{neukamm2018quantitative,  neukamm2019lipschitz} for a periodic composite with a ``regular'' microstructure, e.g., a matrix material with smooth, possibly touching inclusions. It is shown that if $W(x,\cdot)$ is $C^2$ in a neighbourhood of $\SO d$, then the homogenized stored energy function $W_{\hom}$ is $C^2$ in a neighbourhood of $\SO d$, say in $U_{\rho}:=\big\{F\in\R^{d\times d}\,:\,{\rm dist}(F,SO(d))<\rho\,\big\}$ for some $\rho>0$. The argument critically relies on the observation that for $F\in U_{\rho}$ the multi-cell formula simplifies to a single-cell formula that features a representation with help of a periodic corrector. More precisely, in \cite{neukamm2018quantitative,  neukamm2019lipschitz} we prove that for any $F\in U_{\rho}$ there exists a periodic corrector  $\varphi_F\in H^1_{\mathrm{per}}(\Box; \R^d)$ such that
\begin{equation}\label{eq:intro2}
  W_{\hom}(F)=W_{\hom,1}(F)=\dashint_{\Box}W(x,F+\nabla\varphi_F(x))\,dx.
\end{equation}
Moreover, we obtain a similar representation for the  derivatives of $W_{\hom}$, in particular, we show that the tangent modulus admits the respresentation
\begin{equation*}
  D^2W_{\hom}(F)H\cdot G=\,\dashint_{\Box}D^2W(x,F+\nabla \varphi_F)(H+\nabla \varphi_{F,H}) \cdot G\,dx
\end{equation*}
for the periodic, linearized corrector $\varphi_{F,H}\in H^{1}_{\mathrm{per}}(\Box; \R^d)$. These corrector representations are the starting point for a quantitative analysis of $W_{\hom}$ and minimizers of $\mathcal E_\varepsilon$; in particular, in \cite{neukamm2018quantitative, neukamm2019lipschitz} we prove error estimates for the nonlinear two-scale expansion and we establish Lipschitz estimates for minimizers of $\mathcal E_\varepsilon$ that are uniform in $\varepsilon>0$. The general strategy of proof for \eqref{eq:intro2} relies on a reduction to the convex case based on two major ingredients:
\begin{itemize}
\item A \textit{matching convex lower bound}: Inspired by \cite{CDKM06} we construct in \cite[Corollary 2.3]{neukamm2018quantitative} a strongly convex integrand $V$ with quadratic growth such that
  \begin{align}
    &W(x,F)+\mu \det F\ \geq V (x,F)\qquad\mbox{for all $F\in \R^{d\times d}$,} \nonumber \\
    \label{eq:intro3b}
    &W(x,F)+\mu \det F\ =V (x,F)\qquad \mbox{for all }F\in U_{\rho},
  \end{align}
  where $\mu>0$ and $\rho>0$ only depend on $W$. 
With help of $V$, convex homogenization theory and the fact that $F\mapsto\det F$ is a Null-Lagrangian, we obtain the lower bound $W_{\hom}(F)+\mu\det F\geq V_{\hom}(F)=\int_\Box V(x,F+\nabla\varphi_F)$  with a convex corrector $\varphi_F$ given as the unique (up to a constant) sublinear solution to the monotone corrector problem, 
\begin{equation}\label{intro:convexcorrectoreq}
-\nabla\cdot DV(x,F+\nabla\varphi_F)=0\qquad\mbox{in $\R^d$.}
\end {equation}  

\item \textit{Global Lipschitz regularity.} By the regularity theory for uniformly elliptic,  monotone systems with piecewise constant, periodic coefficients, we establish the following global  Lipschitz estimate for the convex corrector
  \begin{equation*}
    \|{\rm dist}(F+\nabla\varphi_F,SO(d))\|_{L^\infty(\Box)}\leq C{\rm dist}(F,SO(d)),
  \end{equation*}
  where for $d>2$ we require the right-hand side to satisfy an additional smallness condition, see \cite[Corollary 1]{neukamm2019lipschitz}. In view of the matching property of $V$ (cf.~\eqref{eq:intro3b}), this allows us to deduce the sought for corrector representation \eqref{eq:intro2}.
\end{itemize}
\smallskip

{\it Random composites and laminates.} The construction of the matching convex lower bound $V$ verbatim extends to the random case, see Lemma~\ref{C:wv} below. Nevertheless the analysis in \cite{neukamm2018quantitative, neukamm2019lipschitz} is restricted to periodic composites, since periodicity is critically used to obtain the required global Lipschitz estimate for the corrector: Indeed, in \cite{neukamm2019lipschitz} we obtain the Lipschitz estimate by combining a small-scale Lipschitz estimate (cf.~\cite[Theorem 4]{neukamm2019lipschitz}) of the form $\|\nabla\varphi_F\|_{L^\infty(\Box_{1/2})}\leq C\|\nabla\varphi_F\|_{L^2(\Box)}$ with the energy estimate $\|\nabla\varphi_F\|_{L^2(\Box)}\leq C{\rm dist}(F,SO(d))$ for sublinear solutions to \eqref{intro:convexcorrectoreq}. While the latter is standard in the periodic case, in the random setting such an estimate generally only holds in the weaker form of a \textit{large scale} $L^2$-estimate: For all $L$ larger than a random minimal radius we have $L^{-\frac d2}\|\nabla\varphi_F\|_{L^2(\Box_L)}\leq C{\rm dist}(F,SO(d))$; we refer to \cite{fischer2019optimal} where such an estimate has been established for monotone systems.  Thus, in the random setting we may only expect the bound
\begin{equation*}
  \|{\rm dist}(F+\nabla\varphi_F,SO(d))\|_{L^\infty(B_1(x))}\leq \mathcal C(x){\rm dist}(F,SO(d)),
\end{equation*}
where $\mathcal C(x)$ is a stationary random field with stretched exponential moments. We note that this Lipschitz estimate is not global, which prevents us to follow the strategy of \cite{neukamm2019lipschitz}.

In contrast, for random laminates, as considered in this paper, the situation is better, since then the corrector problems for $\varphi_F$ and the linearized corrector $\varphi_{F,G}$, simplify to ordinary differential equations, see~Theorem~\ref{T:1:0}(ii)(b). This allows us to retrieve global, deterministic Lipschitz estimates by appealing to ODE-arguments, see Lemma~\ref{lemma:388:new} below.
\medskip

{\it Quantitative periodic RVEs.} We finally comment on our quantitative analysis of the RVE approximation for $W_{\hom}$, $DW_{\hom}$, and $D^2W_{\hom}$. As we explain in detail in the next section, in our paper we consider parametrized models to describe random laminates. More precisely, we denote by $(\Omega,\mathbb P)$ a probability space of parameter fields $\omega:\R\to\R$ where $\mathbb P$ is stationary and ergodic w.r.t.~the shifts $\omega\mapsto\omega(\cdot+z)$, $z\in\R$. We then consider stored energy functions of the form $(x,F)\mapsto W(\omega(x_d),F)$, where $x_d$ denotes the $d$th coordinate of $x\in\R^d$, see Assumption~\ref{ass:W:1}. Our qualitative Theorem~\ref{T:1:0} then yields the corrector representation formula
\begin{equation}\label{neukamm:21}
  W_{\hom}(F)=\mathbb E\Big[\dashint_{\Box}W(\omega(x_d),F+\nabla\varphi_F)\,dx\Big],
\end{equation}
for $F$ sufficiently close to $SO(d)$. A direct evaluation of \eqref{neukamm:21} is not possible in practice, since the expression invokes two ``infinities'': The domain of the corrector equation \eqref{intro:convexcorrectoreq} is unbounded and the probability space has infinite degrees of freedom. Hence, a suitable approximation is required.  The \textit{representative volume element} (RVE) method is a well-established procedure for this purpose. In this method, the corrector problem is considered (for a finite number of samples) on a domain of a \textit{finite} size $L$ together with suitable boundary conditions. It is an ongoing discussion in the computational mechanics community how to choose the size of the RVE and the appropriate boundary conditions, e.g.~see \cite{kanit,schneider2021} and the references therein. In particular, we note that \cite{kanit} also provides a numerical study of the convergence rate for the RVE-approximation  of the tangent modulus of a nonlinear material.

The first analytic convergence results with optimal scaling in $L$ for the RVE-approximation have been obtained by Gloria and Otto in \cite{GloriaOtto, GloriaOtto2} for discrete, linear elliptic equations. Periodic-RVEs have been first investigated in \cite{gloria2015quantification} in the discrete setting and recently in \cite{fischer2019optimal} in the case of monotone systems. In periodic-RVEs (in the form of \cite{gloria2015quantification,fischer2019optimal}) the original probability measure $\mathbb P$ is approximated by a stationary, probability measure $\mathbb P_L$ that is supported on $L$-periodic parametrizations. The effect of this approximation is that for $\mathbb P_L$-almost every realization, the corrector equation simplifies to an equation on a \textit{finite} domain (of size $L$) with \textit{periodic boundary conditions}. These equations are well-posed pathwise and thus for $\mathbb P_L$-a.e.~parameter field $\omega_L\in\Omega$ the proxy
\begin{equation*}
  W_{\hom, L}(\omega_L,F):=\dashint_{\Box_{L}}W(\omega_L(x_d),F+\nabla\varphi_F(\omega_L,\cdot))\,dx
\end{equation*}
is well-defined (provided $F\in U_{\rho}$). As already observed in \cite{GloriaOtto} the RVE-error naturally decomposes as
\begin{equation*}
  W_{\hom,L}(F)-W_{\hom}(F)=\underbrace{W_{\hom,L}(F)-\mathbb E_L\Big[W_{\hom,L}(F)\Big]}_{=:{\rm err}_{\rm rand}(L)} + \underbrace{\mathbb E_L\Big[W_{\hom,L}(F)-W_{\hom}(F)\Big]}_{=:{\rm err}_{\rm syst}(L)},
\end{equation*}
into a random error (due to random fluctuations w.r.t. $\mathbb P_L$) and a systematic error due to differences of $\mathbb P_L$ and $\mathbb P$. As already observed in \cite{gloria2015quantification}, in order to quantify the error, the assumption of ergodicity needs to be quantified. Moreover, both errors are sensitive to the decay of correlations of the random material, see also \cite{gloria2019quantitative} for a detailed analysis on the impact of the decay rate of correlations and the scaling of the sublinear corrector.
In the present paper we study the periodic-RVE approximation under the strongest possible assumption on the decay of correlations, namely,  materials that feature a rapid decay of correlations on scales larger than $1$. In the existing literature two different approaches exist to quantify ergodicity: Either by means of mixing conditions as in \cite{armstrong2016quantitative,ArmstrongKuusiMourrat16} or in form of functional inequalities, see \cite{naddaf1998estimates, GloriaOtto, GloriaOtto2, gloria2019quantitative, DuerinckxGloria2020b, duerinckx2017multiscale}. In the present paper we follow the second approach and work with a spectral gap estimate. In essence, we make the following assumptions:
\begin{itemize}
\item \textit{Existence of  an $L$-periodic approximation}:  There exists a shift-invariant probability measure $\mathbb P_L$ that is supported on $L$-periodic fields in $\Omega$ and that recovers the distribution of $\mathbb P$ in $\Box_{L/2}$, see Definition~\ref{def:pl} for details.
\item \textit{Fast decorrelations}: $(\Omega,\mathbb P)$ and $(\Omega,\mathbb P_L)$ feature \textit{spectral gap estimates} that encode fast decorrelations on scales larger than $1$, see Definition~\ref{sgs}.
\end{itemize}
If $W(\omega,F)$ is sufficiently regular in $\omega$ and $F$, and for deformations $F$ that are sufficiently close to $SO(d)$, our main result then yields estimates on the error of the RVE-approximation of $W_{\hom}(F)$, the stress tensor $DW_{\hom}$ and the tangent modulus $D^2W_{\hom}$. In particular, for $W_{\hom,L}$ we get,
\begin{align*}
|{\rm err}_{\rm rand}|\,\leq \mathcal C\,{\rm dist}^2(F,SO(d))L^{-\frac12},\qquad |{\rm err}_{\rm syst}(L)|\,\leq C\, {\rm dist}^2(F,SO(d)) \frac{\ln L}{L},
\end{align*}
where $\mathcal C$ denotes a random constant with $\mathbb E_L\big[\exp(\tfrac1 C\mathcal C)\big]\leq 2$, and $C$ denotes a deterministic constant, see Theorem~\ref{Theorem:2}. These estimates are optimal w.r.t.~scaling in $L$ and w.r.t.~the integrability of $\mathcal C$. Moreover, they are the first analytical estimates for RVEs for geometrically nonlinear composites. Due to the different scaling of the random and systematic error, our results allows for optimizing the ratio between the number of Monte-Carlo iterations and the size of the RVE, see Corollary~\ref{cor:T:2}.
\bigskip

{\bf Structure of the paper:} We present the assumptions and main results of this paper in detail in Section \ref{sec:setting}. In Section \ref{sec:determ} we collect and prove the deterministic auxiliary results needed for the proofs of the main results. In Section \ref{sec:pr:qual} the proof of the main qualitative result is presented. In Section \ref{sec:stochastic} we collect and prove the stochastic auxiliary estimates that are needed for the proof of the quantitative error estimates. Section \ref{sec:rve} is dedicated to the proof of the main quantitative results. 

\smallskip
{\bf Notation.} 
\begin{itemize}
\item We write $\dashint_{A}\cdot$ for the averaged integral $\frac{1}{|A|}\int_{A}\cdot$. We frequently write $(f)_{A}$ to denote $\dashint_{A}f$.
\item For $L>0$ and $d\in \N$, we use the notation $\Box_{L}:=(0,L)^d$.
\item We identify one-dimensional functions $\R \ni x \mapsto u(x)\in \R$ with their extension to $\R^d$ that depends only on the $d$-th coordinate direction, i.e., $\R^{d} \ni x \mapsto u(x_d)\in \R$. We usually write $x_d$ for a one-dimensional variable in $\R$. Derivatives with respect to $x_d$ are denoted by $\partial_{x_d}$.  In particular, for $\varphi\in H^1\loc(\R;\R^m)$ we write $\nabla \varphi$ to denote $\partial_{x_d}\varphi \otimes e_d$ and not $\partial_{x_d}\varphi$. The same applies to divergence, in particular, $\dive$ always denotes the $d$-dimensional divergence operator.  
\item For a matrix valued mapping $v: A\to \R^{m\times d}$, we denote by $v_d: A \to \R^{m}$ its $d$-th column, i.e., $v_d(x)=[v_{1d}(x),...,v_{md}(x)]$. 
\item We write $W^{1,p}_{\mathrm{per},0}(\Box_{L}):= \cb{\varphi \in W^{1,p}_{\mathrm{per}}(\Box_{L}): \; \dashint_{\Box_{L}}\varphi \; dx = 0}$, $H^1_{\mathrm{per},0}(\Box_{L}):=W^{1,2}_{\mathrm{per},0}(\Box_{L})$.
\item We set $W_{\rm uloc}^{1,p}(\R^d):=\{u\in W_{\rm loc}^{1,p}(\R^d)\,:\,\sup_{x\in\R^d}\|u\|_{W^{1,p}(B_1(x))}<\infty\}$, $H_{\rm uloc}^1(\R^d)=W_{\rm uloc}^{1,2}(\R^d)$
\item For $\delta > 0$, we use the notation $U_{\delta}:= \cb{F \in \R^{d \times d} : \: \dist(F,\SO d)< \delta}$.
\end{itemize}

\section{Setting and main results}\label{sec:setting}

In this section we first introduce the standard setting for the description of stationary random media. Consequently we present the main results about nonconvex integral functionals in the vicinity of rotations: In Theorem \ref{T:1:0} we argue that the RVE approximations are well-defined and the so-called stochastic one-cell formula holds. Theorem \ref{Theorem:2} describes the precision of the RVE method in terms of the size of the representative elements.

\bigskip
We consider the parameter space $\Omega=\cb{\omega: \R\to \R \quad \text{measurable}}$ equipped with a $\sigma$-algebra $\mathcal{S}$ and a probability measure $\mathbb{P}$. The mathematical expectation is denoted by $\erwartung{\cdot}:= \int_{\Omega} \cdot d\mathbb{P}(\omega)$. A sample $\omega \in \Omega$ describes the spatially-varying (layered) material properties and for this reason we frequently refer to it as a \textit{configuration}. We assume the following standard assumptions for the random configuration space $(\Omega,\mathcal{S},\mathbb{P})$:
\begin{enumerate}[label = (P\arabic*)]
\item \label{stationarity} (\textit{Stationarity}.) For any $z \in \R$, the vectors $\brac{\omega(x_1),...,\omega(x_n)}$ and $\brac{\omega(x_1+z),..., \omega(x_n+z)}$  have the same joint distribution for arbitrary $x_1,..., x_n \in \R$ and $n\in \N$.
\item \label{ass:ergodicity} (\textit{Ergodicity}.) For any  random variable $\mathcal{F}\in L^1(\Omega)$ we have
\begin{equation*}
\lim_{R\to \infty} \dashint_{0}^R \mathcal{F}(\omega(\cdot+ x)) dx = \erwartung{\mathcal{F}}.
\end{equation*} 
\end{enumerate}

We state the assumption on the considered energy density $W$ as follows. We first introduce a class of frame-indifferent stored energy functions that are minimized, non-degenerate and smooth at identity, and satisfy the growth condition stated below.
\begin{definition}\label{def:walphap}
  For $\alpha>0$ and $p>1$, we denote by $\mathcal W_{\alpha}^p$ the class of Borel functions $W:\R^{d\times d}\to[0,+\infty]$ which satisfies the following  properties:
  \begin{itemize}
  \item[(W1)] $W$ satisfies $p$-growth from below, i.e.
    \begin{equation*}
      \alpha|F|^p-\frac{1}{\alpha}\leq W(F)\quad\mbox{for all $F\in\R^{d\times d}$};
    \end{equation*}
  \item[(W2)] $W$ is frame indifferent, i.e.
    \begin{equation*}
      W(RF)=W(F)\quad\mbox{for all $R\in\SO d$, $F\in\R^{d\times d}$};
    \end{equation*}
  \item[(W3)] $F=\Id$ is a natural state and $W$ is non-degenerate, i.e.\ $W(\Id)=0$ and 
    \begin{align}
      W(F)&\geq \alpha\dist^2(F,\SO d)\quad\mbox{for all $F\in\R^{d\times d}$;}\label{ass:onewell}
    \end{align}
  \item[(W4)] $W$ is $C^3$ in a neighborhood of $\SO d$ and
    \begin{equation}\label{ass:wd2lip}
      \|W\|_{C^3(\overline {U_{\alpha}})}<\frac{1}{\alpha}.
    \end{equation}
    \end{itemize}
 \end{definition}
\begin{assumption}\label{ass:W:1}
  Fix $\alpha>0$ and $p\geq d$. We suppose that $W:\R\times \R^{d\times d}\to [0,+\infty]$ is a Borel function and $W(\omega,\cdot)\in\mathcal W_{\alpha}^p$ for almost every $\omega\in \R$.
\end{assumption}
 \begin{remark}
   Note that we use the same constant $\alpha$ in the growth condition
   (W1), the non-degeneracy condition (W3) and the regularity
   assumption (W4). The only reason for this is that we want to reduce the number
   of parameters invoked in the assumption for $W$. Let us anticipate
   that the region, in which the multi-cell formula reduces to a
   single-cell expression, will depend (in a quite implicit way) on
   the constants in (W1), (W3) and (W4). Hence, working with the
   single parameter $\alpha$ simplifies the presentation. Note that we
   have $W^p_{\alpha}\subset W^{p}_{\alpha'}$ for $0<\alpha'<\alpha$.
 \end{remark}
We associate with $W$ the following approximation for the homogenized stored energy function 
\begin{equation}\label{eq:364:2}
W_{\mathrm{hom},L}: \Omega \times \R^{d\times d} \to \R, \quad W_{\mathrm{hom}, L}(\omega,F) = \inf_{\varphi \in W^{1,p}_{\mathrm{per}, 0}(\Box_L; \R^d)}\dashint_{\Box_L} W(\omega(x_d), F+ \nabla \varphi) dx.
\end{equation}

We state the first main result of this paper as follows. Based on a merely qualitative ergodicity assumption combined the laminated structure of the material, we show that in the vicinity of rotations correctors exist and the homogenized stored energy function is characterized by a stochastic one-cell formula. We also discuss regularity properties of the homogenized stored energy function. 

\begin{theorem}[Corrector representation and regularity of {$W_{\mathrm{hom}}$}]\label{T:1:0} Let $\brac{\Omega, \mathcal{S}, \mathbb{P}}$ satisfy \ref{stationarity}-\ref{ass:ergodicity} and let Assumption~\ref{ass:W:1} hold. Then there exists $\overline{\delta}=\overline{\delta}(\alpha,d,p)>0$ and $c=c(\alpha,d,p)\in[1,\infty)$ such that for all $F \in U_{\overline{\delta}}$ the following statements hold true:
\begin{enumerate}[label = (\roman*)]
\item (Homogenized energy density). The following limit exists and it is deterministic 
\begin{align*}
 W_{\hom}(F)
 :=&\lim_{L\to\infty} W_{\hom,L}(\omega,F)\qquad\mbox{for $\mathbb P$-a.a. $\omega\in \Omega$.}
\end{align*}
\item (Corrector). There exists a unique random field $\varphi_F:\Omega\times\R^d\to\R^d$ with the following properties:
\begin{enumerate}
 \item For $\mathbb P$-a.a. $\omega\in \Omega$ it holds $\varphi_F(\omega,\cdot)\in W^{1,\infty}_{\rm loc}(\R^d;\R^d)$, $\dashint_{\Box}\varphi_F(\omega,\cdot)=0$ and 
   \begin{equation}\label{eq:3912}
  -\dive DW(\omega(x_d),F+\nabla \varphi_F)=0 \quad \text{in } \R^d,
 \end{equation} 
and the corrector $\varphi_F$ is sublinear in the sense
 \begin{equation}\label{eq:correctorsublinear}
  \lim_{R \to \infty} \frac{1}{R}\|\varphi_F(\omega,\cdot)\|_{L^\infty(\Box_{R})} = 0.
 \end{equation}
 \item The corrector $\varphi_F$ is one-dimensional in the sense of $\varphi_F(\omega,x)=\varphi_F(\omega,x_d)$. 
 \item The gradient of the corrector $\varphi_F$ is stationary and satisfies $\erwartung{\nabla \varphi}=0$ and
 \begin{equation}\label{est:stationarygradient}
 \|\nabla \varphi_F(\omega,\cdot)\|_{L^\infty(\R^d)}\leq c\dist(F,\SO d)\qquad\mbox{for $\mathbb P$-a.a. $\omega\in \Omega$}.
 \end{equation}
\end{enumerate} 
The homogenized energy density given in (i) satisfies the stochastic one-cell formula
\begin{equation}\label{eq:whomviastationarycorrector}
 W_{\hom}(F)=\erwartung{\dashint_{\Box}W(\omega(x_d),F+\nabla\varphi_F(\omega))dx}.
\end{equation}
\item (Regularity). The energy densities $W_{\hom,L}(\omega,\cdot)$ and $W_{\hom}$ defined in \eqref{eq:364:2} and in (i) are of class $C^{3}(U_{\overline{\delta}})$ and it holds, for all $G,H \in \R^{d\times d}$,
\begin{align}
 DW_{\hom}(F)\cdot G=& \erwartung{\dashint_{\Box}DW(\omega(x_d),F+\nabla \varphi_F)\cdot G \;dx},\label{eq:def:dwhom} \\
 D^2W_{\hom}(F)H\cdot G=&\erwartung{ \dashint_{\Box}D^2W(\omega(x_d),F+\nabla \varphi_F)(H+\nabla \varphi_{F,H}) \cdot G\,dx},\label{eq:def:ddwhom}
\end{align}
where the linearized corrector $\varphi_{F,H}:\Omega\times\R^d\to\R^d$ is uniquely characterized by
\begin{equation}\label{eq:linearizedcorrector:1}
-\dive \brac{D^2W(\omega(x_d), F+ \nabla \varphi_F)\brac{H+\nabla \varphi_{F,H}}}= 0 \quad \text{in }\R^d, \quad\mbox{for $\mathbb P$-a.a.\ $\omega \in \Omega$,}
\end{equation}
and
\begin{equation}\label{eq:linearizedcorrector:2}
\dashint_{\Box} \varphi_{F,H} dx = 0, \quad \nabla \varphi_{F,H} \text{ is stationary,} \quad \erwartung{\nabla \varphi_{F,H}} =0, \quad \erwartung{|\nabla \varphi_{F,H}|^2}< \infty.
\end{equation}
Moreover, $\varphi_{F,H}$ is one-dimensional in the sense $\varphi_{F,H}(\omega,x)=\varphi_{F,H}(\omega,x_d)$ and for all $H\in\R^{d\times d}$ it holds $\lim_{R\to\infty}\frac1R\|\varphi_{F,H}\|_{L^\infty(\Box_R)}=0$ and $\norm{\nabla \varphi_{F,H}(\omega,\cdot)}_{L^{\infty}(\R^d)}\leq c |H|$ for $\mathbb P$-a.a. $\omega\in \Omega$.

\item (Strong rank-one convexity). 
\begin{equation}\label{limit:nondegenerate}
  D^2W_{\hom}(F)[a\otimes b,a\otimes b]\geq  \frac1c|a\otimes b|^2\quad\mbox{for all $a,b\in\R^d$}.
\end{equation}
\end{enumerate} 
\end{theorem}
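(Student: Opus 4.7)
The plan is to reduce the strong rank-one convexity of $W_{\hom}$ to strong convexity of a convex companion density $V_{\hom}$, using the matching convex lower bound together with the null-Lagrangian property of the determinant. This is the stochastic analogue of the strategy established in the periodic setting of \cite{neukamm2018quantitative, neukamm2019lipschitz}.

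By the matching convex lower bound (Lemma~\ref{C:wv}), there exist $\mu,\rho>0$ depending only on $\alpha,d$ and a uniformly strongly convex and smooth density $V:\R\times\R^{d\times d}\to\R$ satisfying $W(\omega,F)+\mu\det F\geq V(\omega,F)$ for all $F$, with equality on $U_\rho$. Combining this with the Lipschitz bound (ii)(c) on the nonlinear corrector $\varphi_F$ and the analogous (standard) Lipschitz bound for the convex corrector $\psi_F$ associated with $V$, after possibly shrinking $\bar\delta$ we may ensure that $F+\nabla\varphi_F$ and $F+\nabla\psi_F$ lie in $U_\rho$ almost surely for $F\in U_{\bar\delta}$. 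Since $\det$ is a null-Lagrangian, i.e.\ $\erwartung{\dashint_\Box \det(F+\nabla\varphi)\,dx}=\det F$ for every corrector with stationary, mean-zero gradient, this yields the matching identity
\begin{equation}\label{eq:match:plan}
W_{\hom}(F)+\mu\det F=V_{\hom}(F)\qquad\text{for all }F\in U_{\bar\delta}.
\end{equation}

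Both sides of \eqref{eq:match:plan} are $C^3$ on $U_{\bar\delta}$ by (iii). For invertible $F$ the map $t\mapsto \det(F+t\,a\otimes b)=\det(F)\bigl(1+t\,b\cdot F^{-1}a\bigr)$ is affine, hence $D^2\det(F)[a\otimes b,a\otimes b]=0$. Differentiating \eqref{eq:match:plan} twice therefore gives $D^2W_{\hom}(F)[a\otimes b,a\otimes b]=D^2V_{\hom}(F)[a\otimes b,a\otimes b]$. It thus suffices to show that $V_{\hom}$ is uniformly strongly convex: using the linearized corrector $\psi_{F,H}$ for $V$ and the representation analogous to \eqref{eq:def:ddwhom}, strong convexity of $V$ combined with Jensen's inequality (and $\erwartung{\nabla\psi_{F,H}}=0$) yields
\[
D^2V_{\hom}(F)[H,H]\geq c\,\erwartung{\dashint_\Box|H+\nabla\psi_{F,H}|^2\,dx}\geq c|H|^2
\]
for all $H\in\R^{d\times d}$. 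Specializing to $H=a\otimes b$ gives \eqref{limit:nondegenerate}.

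The main (minor) obstacle is establishing the matching identity \eqref{eq:match:plan}, which requires the Lipschitz estimate on both correctors---already part of Theorem~\ref{T:1:0} and its well-known convex counterpart. Once this is secured, the remaining steps are algebraic; the key mechanism is that subtracting the null-Lagrangian $\mu\det$ from the strongly convex $V_{\hom}$ preserves all second derivatives in rank-one directions, transferring strong convexity of $V_{\hom}$ into strong rank-one convexity of $W_{\hom}$.
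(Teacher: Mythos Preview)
Your proposal addresses only part~(iv) of the theorem, taking parts~(i)--(iii) as given; for part~(iv) it is correct and follows essentially the same route as the paper's Step~6: the matching identity $W_{\hom}(F)+\mu\det F=V_{\hom}(F)$ on $U_{\bar\delta}$ (established in the paper as \eqref{eq:lim:whomL}), rank-one affinity of $\det$, and uniform strong convexity of $V_{\hom}$. Your explicit Jensen argument for $D^2V_{\hom}(F)[H,H]\geq c|H|^2$ is exactly what the paper defers to \cite[Proof of Theorem~2, Step~6]{neukamm2018quantitative}; one minor redundancy is that you introduce a separate convex corrector $\psi_F$, whereas the paper shows earlier that the $W$- and $V$-correctors coincide on $U_{\bar\delta}$, so no second corrector is needed.
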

The proof is presented in Section\nobreakspace\ref{sec:pr:qual}.
\begin{remark}[$C^k$-regularity]
In Theorem\nbs\ref{T:1:0}, we show that $ \sup_{\omega\in \R}\norm{W(\omega,\cdot)}_{C^3(\overline{U_{\alpha}})}<\infty$  implies $W_{\mathrm{hom},L}(\omega,\cdot), W\h \in C^{3}(U_{\overline{\delta}})$. By an inductive argument that exploits the laminate structure, it is also possible to show that $\sup_{\omega\in \R}\norm{W(\omega,\cdot)}_{C^k(\overline{U_{\alpha}})}<\infty$ implies $W_{\mathrm{hom},L}(\omega,\cdot), W\h \in C^{k}(U_{\overline{\delta}})$ for an arbitrary $k \in \N$. Moreover,  explicit formulas for the derivatives in terms of linearized correctors of higher order can be obtained, for example, we have, for $F,G,H,I\in \R^{d \times d}$,
\begin{align}\label{eq:def:d3whom}
D^3W_{\mathrm{hom}}(F)IHG  = &\erwartung{  \dashint_{\Box} D^3W(\omega(x_d),F+ \nabla \varphi_{F})\brac{I+ \nabla \varphi_{F,I}}\brac{H+ \nabla \varphi_{F,H}} \cdot (G+\nabla \varphi_{F,G}) dx}.
\end{align}
\end{remark}

For deterministic, periodic composites, the conclusion of Theorem~\ref{T:1:0} was proven in \cite{neukamm2018quantitative,neukamm2019lipschitz} under various assumptions on the regularity of $x\mapsto W(x,F)$. However, Theorem~\ref{T:1:0} is the first corrector representation result for nonlinearly elastic materials with a random microstructure. Furthermore, the smallness assumption $F\in U_{\bar \delta}$ in Theorem~\ref{T:1:0} cannot be relaxed as can be seen by M\"uller's counterexample \cite[Theorem 4.7]{Mueller87} which features a periodic laminate material that undergoes buckling for sufficiently strong compressions.
\medskip

We remark that Assumption~\ref{ass:W:1} does not impose any condition on the growth of $W$ from above; $W(\omega,F)$ might even be equal to $+\infty$ for $F$ outside of an open neigbourhood of $\SO d$. Therefore, it is unclear if the energy functional $u\mapsto \int_{O}W(\omega(\frac{x_d}{\varepsilon}), \nabla u)dx$ homogenizes (in the sense of $\Gamma$-convergence). However, if $W$ additionally satisfies $p$-growth of the form
\begin{equation}\label{eq:standard:pgrowth}
W(\omega,F) \lesssim1 + |F|^p \quad \text{for all }F \in \R^{d\times d}\mbox{ and $\mathbb P$-a.a.\ $\omega\in\Omega$,}
\end{equation}
the corresponding $\Gamma$-limit takes the form $u\mapsto \int_{O}\overline{W}_{\mathrm{hom}}(\nabla u)dx$ where
\begin{equation*}
\overline{W}\h(F) = \lim_{L\to \infty} \inf_{\varphi \in W^{1,p}_0(\Box\lol; \R^d)}\dashint_{\Box\lol} W(\omega(x_d),F+ \nabla \varphi(x))dx.
\end{equation*}
The following corollary shows that $W\h$ and $\overline{W}\h$ match in the vicinity of rotations.
\begin{corollary}\label{cor:T:1}
Consider the setting of Theorem~\ref{T:1:0}. There exists ${\overline{\overline \delta}}={\overline{\overline \delta}}(\alpha,d,p)>0$ such that
\begin{equation*}
W\h(F) = \overline{W}\h(F)\qquad\mbox{ for all $F \in U_{\overline{\overline{\delta}}}$}.
\end{equation*}
\end{corollary}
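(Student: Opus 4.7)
I will prove the two opposite inequalities and set $\overline{\overline\delta} := \overline\delta$. The easy direction $W_{\hom}(F) \leq \overline W_{\hom}(F)$ is a straightforward competitor comparison: given $\varphi \in W^{1,p}_0(\Box_L;\R^d)$, its $L$-periodic extension to $\R^d$ lies in $W^{1,p}_{\per}(\Box_L;\R^d)$ (no singular boundary contribution in the weak derivative, because $\varphi$ vanishes on $\partial\Box_L$); subtracting its mean gives an admissible element of $W^{1,p}_{\per,0}(\Box_L;\R^d)$ in \eqref{eq:364:2} with identical gradient. Hence $W_{\hom,L}(\omega,F) \leq \dashint_{\Box_L}W(\omega(x_d),F+\nabla\varphi)\,dx$; taking the infimum over $\varphi$ and then the limit $L\to\infty$ (Theorem~\ref{T:1:0}(i) on the left, the definition of $\overline W_{\hom}$ on the right) yields the claim.

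For the reverse inequality $\overline W_{\hom}(F) \leq W_{\hom}(F)$ I would construct a zero-trace test function from the corrector $\varphi_F$ of Theorem~\ref{T:1:0}(ii). The relevant properties are: $\varphi_F$ depends only on $x_d$, the uniform bound $\|\nabla\varphi_F\|_{L^\infty(\R^d)} \leq c\,\dist(F,\SO d)$ from \eqref{est:stationarygradient}, and the sublinearity \eqref{eq:correctorsublinear}, which yields, for $\mathbb{P}$-a.e.\ $\omega$, a scalar $s(\omega,L) := L^{-1}\|\varphi_F(\omega,\cdot)\|_{L^\infty(\Box_L)} \to 0$ as $L\to\infty$. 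Choose a product cutoff $\eta_L \in C^\infty_c(\Box_L;[0,1])$ equal to $1$ outside a boundary layer $\Sigma_L$ of width $\ell_L := L\sqrt{s(\omega,L)}$, with $|\nabla\eta_L| \lesssim 1/\ell_L$, and set $\psi_L(x) := \eta_L(x)\,\varphi_F(\omega,x_d) \in W^{1,p}_0(\Box_L;\R^d)$. Splitting $\dashint_{\Box_L}W(\omega(x_d),F+\nabla\psi_L)\,dx$ into the interior $\{\eta_L=1\}$ (where $\nabla\psi_L = \nabla\varphi_F$) and the boundary layer $\Sigma_L$, the Birkhoff ergodic theorem applied to the stationary ergodic field $y \mapsto W(\omega(y_d),F+\nabla\varphi_F(\omega,y))$ shows that the interior contribution converges to $W_{\hom}(F)$ by the one-cell formula \eqref{eq:whomviastationarycorrector}.

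The main obstacle is the boundary-layer contribution. Using the $p$-growth from above \eqref{eq:standard:pgrowth} together with $|\nabla\psi_L| \lesssim \dist(F,\SO d) + s(\omega,L)L/\ell_L$, one bounds the boundary-layer term by a constant times $\frac{\ell_L}{L}\bigl(1 + |F|^p + \dist(F,\SO d)^p + (s(\omega,L)L/\ell_L)^p\bigr)$. The choice $\ell_L = L\sqrt{s(\omega,L)}$ balances the two quantities: both the relative volume $\ell_L/L = \sqrt{s(\omega,L)}$ and the dominant summand $(\ell_L/L)(s(\omega,L)L/\ell_L)^p = s(\omega,L)^{(p+1)/2}$ tend to zero. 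Combining the two pieces gives $\limsup_L \dashint_{\Box_L} W(\omega(x_d),F+\nabla\psi_L)\,dx \leq W_{\hom}(F)$, and since $\psi_L$ is Dirichlet-admissible, this yields $\overline W_{\hom}(F) \leq W_{\hom}(F)$. The delicate point is that the $L^\infty$ bound on $\nabla\varphi_F$ alone only gives $\|\varphi_F\|_{L^\infty(\Box_L)} = O(L)$; it is the sublinearity $s(\omega,L)\to 0$ from Theorem~\ref{T:1:0}(ii)(a), combined with the growth bound \eqref{eq:standard:pgrowth} (which is not needed for Theorem~\ref{T:1:0}), that makes the cutoff argument close.
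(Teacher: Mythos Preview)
Your proof is correct and follows the same overall architecture as the paper's: the inequality $W_{\hom}\leq\overline W_{\hom}$ is the trivial direction, and the reverse is obtained by plugging the cut-off corrector $\eta\varphi_F$ into the Dirichlet problem, using sublinearity of $\varphi_F$ to control the contribution of $\varphi_F\otimes\nabla\eta$ in the boundary layer, and the ergodic theorem for the interior.

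The genuine difference is in how the boundary-layer integral is bounded. You invoke the upper $p$-growth \eqref{eq:standard:pgrowth} and balance the layer width $\ell_L=L\sqrt{s(\omega,L)}$ against the blow-up of $|\varphi_F\,\nabla\eta_L|$. The paper instead observes that \eqref{est:stationarygradient} and sublinearity give
\[
\|\dist(F+\nabla(\eta\varphi_F),\SO d)\|_{L^\infty(\Box_L)}
\lesssim \dist(F,\SO d)+\tfrac{1}{\varepsilon L}\|\varphi_F\|_{L^\infty(\Box_L)}
\lesssim \dist(F,\SO d)
\]
for $L$ large, so that for $\overline{\overline\delta}$ sufficiently small the perturbed gradient stays in $U_\alpha$, where $W$ is uniformly bounded by the regularity assumption (W4). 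This replaces the use of \eqref{eq:standard:pgrowth} entirely. What this buys: the paper's argument works even when $W$ has no upper growth control away from $\SO d$ (which is the point of their ``direct argument'' and is relevant for physically realistic stored energies), at the price of possibly shrinking $\overline{\overline\delta}<\overline\delta$. Your argument, by contrast, requires the additional hypothesis \eqref{eq:standard:pgrowth}---which is the setting in which $\overline W_{\hom}$ is introduced in the paper anyway---but allows you to take $\overline{\overline\delta}=\overline\delta$ without further smallness.
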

(See Section\nobreakspace\ref{sec:pr:qual} for the proof.)

\begin{remark}
 Homogenization of $u\mapsto \int_{O}W(\omega(\frac{x_d}{\varepsilon}), \nabla u)dx$ in the sense of $\Gamma$-convergence has been established under weaker conditions than the upper-growth condition \eqref{eq:standard:pgrowth}, e.g., see the classic textbooks~\cite{BraidesBookHom,ZhikovBook}, and more recent works~\cite{DG16,HM11,HNS21,NSS17}. However, to our knowledge all existing results require some control from above or some convexity condition on the set $\{W<\infty\}$ that rule out the application to stored energy functions that satisfy physical growth conditions. In contrast,  with help of the corrector representation of $W_{\rm hom}$ it is possible to prove homogenization in a small-strain regime without imposing an additional growth condition for $W$. More precisely, by adapting the method of \cite[Theorem 3]{neukamm2018quantitative} we get the following: Let $O\subset\R^d$ be sufficiently smooth and let $(u\e)\e\subset g+W_0^{1,p}(O;\R^d)$ be such that
\begin{equation*}
 \mathcal I_{\varepsilon}(u\e)=\inf_{g+W_0^{1,p}(A)}\mathcal I\e+o(1)\quad\mbox{where}\quad \mathcal I_{\varepsilon}(u):=\int_OW(\omega(\tfrac{x_d}\e),\nabla u)-f\cdot u\,dx.
\end{equation*} 
Suppose that the assumptions of Theorem~\ref{T:1:0} are satisfied and assume the smallness condition $\|f\|_{L^q(A)}+\|g-{\rm id}\|_{W^{2,q}(A)}\ll1$ for some $q>d$. Then it holds
\begin{equation}\label{hom:qualitative}
\lim_{\varepsilon\to0}\|u\e-u_0\|_{L^p(O)}=0\qquad \mbox{$\mathbb P$-a.s.,}
\end{equation}
where $u_0$ denotes the unique minimizer of the homogenized energy functional
\begin{align*}
 \mathcal I_{\hom}(u):=\int_O W_{\hom}(\nabla u)-f\cdot u\,dx\qquad\mbox{subject to}\quad u-g\in W_0^{1,p}(O).
\end{align*}
Since in Theorem~\ref{T:1:0} we do not assume any growth conditions from above on $F\mapsto W(\omega,F)$, the convergence \eqref{hom:qualitative} does not follow from the known homogenization theory. 
\end{remark}

\medskip

In the following we discuss an approximation of $W_{\mathrm{hom}}$ and its derivatives by periodic RVEs. In contrast to the approximation \eqref{eq:364:2}, which relies on the introduction of periodic boundary conditions, the periodic RVE that we discuss below is analogous to the one considered in \cite{fischer2019optimal} and is based on a periodic approximation of the probability space $(\Omega, \mathcal{S}, \mathbb{P})$:

\begin{definition}[$L$-periodic approximation of $\mathbb P$]\label{def:pl}
Let $L\geq 1$. A probability space $(\Omega_L = \Omega, \mathcal{S}\lol = \mathcal{S}, \mathbb{P}\lol)$ is called an $L$-periodic approximation of $(\Omega, \mathcal{S}, \mathbb{P})$ if the following conditions hold:
\begin{enumerate}[label = (\roman*)]
\item \label{periodic_stationary}  $\mathbb{P}\lol$ concentrates on $L$-periodic functions and it is stationary in the sense of \ref{stationarity}.
\item \label{periodization}  If the random field $\Omega \times \R \ni (\omega,x) \mapsto \omega(x)$ is distributed according to $\mathbb{P}$ and $\Omega\lol \times \R \ni (\omega\lol,x) \mapsto \omega\lol(x)$ according to $\mathbb{P}\lol$, then the random fields
\begin{equation*}
\Omega \times B_{\frac{L}{4}}(0) \ni (\omega,x) \mapsto \omega(x) \quad \text{and} \quad \Omega\lol \times B_{\frac{L}{4}}(0) \ni (\omega\lol,x) \mapsto \omega\lol(x)
\end{equation*}
have the same distribution. 
\end{enumerate}
\end{definition}

In order to quantify the statement in Theorem \ref{T:1:0} (i) we will make an additional quantitative ergodicity assumption on the probability space and its periodic approximation. This relies on a Malliavin type functional calculus, see \cite{duerinckx2017multiscale,DuerinckxGloria2020b} for a systematic discussion.  

\begin{definition}[Spectral gaps] \label{sgs} Let $L \geq 1$. Let $(\Omega, \mathcal{S}, \mathbb{P})$ be a probability space and $(\Omega\lol, \mathcal{S}\lol, \mathbb{P}\lol)$ its $L$-periodic approximation.
\begin{enumerate}[label=(\roman*)]
\item \label{def:spectral:gap} We say that $(\Omega,\mathcal{S}, \mathbb{P})$ satisfies a spectral gap estimate with constant $\rho>0$ if the following holds:
For any random variable $\mathcal{F}: \Omega \to \R$, we have
\begin{equation*}
\erwartung{|\F-\erwartung{\F}|^2}\leq \frac{1}{\rho^2} \erwartung{\int_{\R}\brac{\int_{B_1(s)} \left| \frac{\partial\F}{\partial \omega} \right|}^2 ds},
\end{equation*}
where $\int_{B_1(s)} \left| \frac{\partial\F}{\partial \omega} \right|$ denotes
\begin{equation*}
\sup_{\delta \omega} \limsup_{t\to 0} \frac{|\F(\omega+t \delta \omega)-\F(\omega)|}{t},
\end{equation*}
where the supremum is taken over all configurations $\delta \omega: \R\to \R$ supported in $B_1(s)$ with $\norm{\delta \omega}_{L^{\infty}(\R)}\leq 1$.

\item \label{definition:379}(\textit{Periodic spectral gap}.) We say that $(\Omega\lol,\mathcal{S}\lol, \mathbb{P}\lol)$ satisfies a periodic spectral gap estimate with constant $\rho>0$ if the following holds: For any random variable $\F\lol: \Omega\lol \to \R$, we have
\begin{equation*}
\mbbE\lol\left[ |\F - \mbbE\lol[\F]|^2 \right]\leq \frac{1}{\rho^2}\mbbE\lol\left[ \int_{0}^L \brac{\int_{B(s)}\left| \frac{\partial \F}{\partial \omega\lol}\right|}^2 ds\right],
\end{equation*}
where $\int_{B(s)}\left| \frac{\partial \F}{\partial \omega\lol}\right|$ denotes
\begin{equation*}
\sup_{\delta\omega\lol} \limsup_{t\to 0}\frac{|\F(\omega\lol + t \delta \omega\lol)-\F(\omega\lol)|}{t},
\end{equation*}
where the supremum is taken over all $L$-periodic configurations $\delta \omega\lol: \R \to \R$ supported in $B_1(s)+L\Z$ and with $\norm{\delta \omega\lol}_{L^{\infty}(0,L)}\leq 1$. 

\end{enumerate}
\end{definition}

\begin{remark}[Example of Gaussian type]It is convenient to give examples of admissible distributions in the class of stationary Gaussian random fields, since the latter are uniquely determined by their covariance functions and allow to apply Malliavin calculus to establish spectral gap inequalities. A specific example is the following: Suppose that $\mathbb P$ describes a stationary, centered Gaussian random field $\omega:\R\to\R$ with a bounded and compactly supported covariance function $\mathcal C(s):={\rm Cov}(\omega(x+s),\omega(s))$ satisfying  $|\mathcal C(s)|=0$ for $|s|\geq\ell/2$ for some correlation length $\ell>0$. Then $(\Omega,\mathcal S,\mathbb P)$ satisfies the spectral gap estimate of Definition~\ref{sgs} (i), e.g.~see~\cite{duerinckx2017multiscale}. For $L\geq 4\ell$ we may define an $L$-periodic approximation by periodizing the covariance function: Set $\mathcal C_L(x):=\mathcal C(\{x\}_L)$, where $\{x\}_L\in[-\frac{L}2,\frac{L}2)$ is uniquely defined by $x-\{x\}_L\in L\Z$. Let $\mathbb P_L$ describe the stationary, centered Gaussian random field with covariance function $\mathcal C_L$. Then $\mathbb P_L$ is indeed an $L$-periodic approximation of $\mathbb P$ in the sense of Definition~\ref{def:pl} and additionally satisfies the spectral gap estimate of Definition~\ref{sgs} (ii) with a constant $\rho$ that is independent of $L$.
\end{remark}

We present our main quantitative result:
\begin{theorem}[Quantitative RVE approximations]\label{Theorem:2} Suppose Assumption~\ref{ass:W:1} is satisfied and in addition that $W$ satisfies 
\begin{equation*}
\norm{D^4W(\omega,\cdot)}_{C(\overline{U_{\alpha}})} + \|\partial_\omega W(\omega,\cdot)\|_{C^3({\overline{U_\alpha}})}\leq \frac1\alpha  \quad \text{for all }\omega \in \R.
\end{equation*}
We let $\brac{\Omega, \mathcal S, \mathbb P}$ satisfy \ref{stationarity} and the spectral gap estimate with constant $\rho>0$ of Definition\nbs\ref{sgs} (i). Let $L\geq 2$ and let $\brac{\Omega,\mathcal{S}, \mathbb P_L}$ be an $L$-periodic approximation of $\mathbb P$ in the sense of Definition~\ref{def:pl} which satisfies the periodic spectral gap estimate with constant $\rho>0$ of Definition \ref{sgs} (ii). $W_{\hom,L}$ denotes the corresponding representative volume element approximation defined in \eqref{eq:364:2}. 

Then, there exists $c=c(\alpha,p, d, \rho)\in[1,\infty)$ such that for all $F \in U_{\overline{\delta}}$ with $\overline\delta>0$ as in Theorem~\ref{T:1:0} the following statements hold true:
\begin{enumerate}[label = (\roman*)]
\item (\textit{Estimate on random fluctuations}). 
\begin{align}
|W_{\mathrm{hom},L}(\cdot,F)- \erwartunglol{W_{\mathrm{hom},L}(F)}|& \leq \mathcal C(\cdot) \dist^2(F,\SO d)L^{-\frac12},\label{est:fluctwhl}\\
|DW_{\mathrm{hom},L}(\cdot,F)- \erwartunglol{DW_{\mathrm{hom},L}(F)}| &  \leq \mathcal C(\cdot) \dist(F,\SO d)L^{-\frac12},\label{est:fluctdwhl} \\
|D^2W_{\mathrm{hom},L}(\cdot,F)- \erwartunglol{D^2W_{\mathrm{hom},L}(F)}| & \leq \mathcal C(\cdot)  L^{-\frac12},\label{est:fluctddwhl}
\end{align} 
where $\mathcal C$ denotes a random variable  satisfying  
\begin{equation*}
\erwartunglol{\exp(\tfrac1c\mathcal C)}\leq 2.
\end{equation*}

\item (\textit{Systematic error.}) 
\begin{align*}
|\erwartunglol{W_{\mathrm{hom},L}(F)}-W\h(F)| & \leq c \dist^2(F,\SO d) \frac{\ln L}{L},\\
|\erwartunglol{DW_{\mathrm{hom},L}(F)}-DW\h(F)| & \leq c\dist(F,\SO d) \frac{\ln L}{L},\\
|\erwartunglol{D^2W_{\mathrm{hom},L}(F)}-D^2W\h(F)| & \leq c \frac{\ln L}{L}.
\end{align*}
\end{enumerate}
\end{theorem}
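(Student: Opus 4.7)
The plan is to handle the two parts of the theorem separately, with common deterministic ingredients for the periodic corrector $\varphi_F^L$, the periodic linearized corrector $\varphi_{F,H}^L$, and the Malliavin sensitivity $\psi_s:=\partial_{\omega_L(s)}\varphi_F^L$. Thanks to the one-dimensional (ODE) structure of the laminate, Theorem~\ref{T:1:0} applied to $(\Omega,\mathbb{P}_L)$ yields the deterministic bounds $\|\nabla\varphi_F^L\|_{L^\infty}\leq c\,\dist(F,\SO d)$ and $\|\nabla\varphi_{F,H}^L\|_{L^\infty}\leq c|H|$. The sensitivity $\psi_s$ solves a linearized ODE driven by a $B_1(s)$-supported source, which upon one integration together with its zero-mean condition yields $\|\nabla\psi_s\|_{L^\infty}\leq c\,\dist(F,\SO d)$ with essentially $B_1(s)$-localized support up to an $O(L^{-1})$ constant-of-integration term; analogous bounds hold for $\partial_{\omega_L(s)}\varphi_{F,H}^L$.

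For (i), I apply the periodic spectral gap of Definition~\ref{sgs}(ii) to each of $W_{\hom,L}(\cdot,F)$, $DW_{\hom,L}(\cdot,F)$ and $D^2W_{\hom,L}(\cdot,F)$. The envelope theorem (optimality of $\varphi_F^L$ with periodic boundary conditions) removes the $\partial_{\omega_L}\varphi_F^L$-contribution from $\partial_{\omega_L}W_{\hom,L}$, so that
\[\int_{B_1(s)}\Big|\tfrac{\partial W_{\hom,L}}{\partial\omega_L}\Big|=\Big|\dashint_{\Box_L}\partial_\omega W(\omega_L(x_d),F+\nabla\varphi_F^L)\,\mathbf{1}_{B_1(s)}(x_d)\,dx\Big|.\]
Since frame indifference forces $W(\omega,R)=DW(\omega,R)=0$ for every $R\in\SO d$, differentiation in $\omega$ together with the regularity hypotheses gives $|\partial_\omega W(\omega,\cdot)|\leq c\,\dist^2$, $|\partial_\omega DW|\leq c\,\dist$ and $|\partial_\omega D^2W|\leq c$ on $\overline{U_\alpha}$. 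Combined with the Lipschitz bounds above this yields per-ball sensitivities of order $L^{-1}\dist^2$, $L^{-1}\dist$ and $L^{-1}$ respectively for $W_{\hom,L}$, $DW_{\hom,L}$ and $D^2W_{\hom,L}$; the extra contributions to the sensitivities of the derivatives that arise from $\nabla\psi_s$ and $\nabla\partial_{\omega_L(s)}\varphi_{F,H}^L$ scale identically. Inserting into the spectral gap inequality yields the $L^2$-variance estimates with the claimed $\dist$-scaling. To upgrade to exponential moments I would follow the Herbst-type iteration of Duerinckx and Gloria, which applies here because all relevant sensitivities are uniformly pointwise bounded by the deterministic Lipschitz estimate.

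For (ii), stationarity and the one-cell formula~\eqref{eq:whomviastationarycorrector} reduce the systematic error to
\[\mathbb{E}_L[W_{\hom,L}(F)]-W_{\hom}(F)=\mathbb{E}_L\!\left[W(\omega_L(0),F+\nabla\varphi_F^L(0))\right]-\mathbb{E}\!\left[W(\omega(0),F+\nabla\varphi_F(0))\right].\]
Integrating the corrector ODE gives the pointwise algebraic identity $DW(\omega(x_d),F+G\otimes e_d)\,e_d=c$; inverting the map $G\mapsto DW(\omega,F+G\otimes e_d)\,e_d$ (monotone near $\SO d$ thanks to the matching convex lower bound of Lemma~\ref{C:wv}) yields $\nabla\varphi_F(x)=h(\omega(x_d),F,c_F)\otimes e_d$, where $c_F$ is the unique deterministic vector with $\mathbb{E}[h(\omega(0),F,c_F)]=0$, and analogously $\nabla\varphi_F^L(x)=h(\omega_L(x_d),F,c_F^L(\omega_L))\otimes e_d$ with $c_F^L$ the random vector satisfying $\dashint_0^L h(\omega_L(r),F,c_F^L)\,dr=0$. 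Since $\omega_L(0)$ and $\omega(0)$ share the same marginal ($0\in B_{L/4}$), a Taylor expansion in $c_F^L-c_F$ combined with the defining equation of $c_F$ cancels the first-order contribution and reduces the systematic error for $W_{\hom,L}$ to a second-order term controlled by $\mathbb{E}_L[|c_F^L-c_F|^2]\leq cL^{-1}$ from part~(i). The logarithmic factor enters when the same scheme is carried out for $DW_{\hom,L}$ and $D^2W_{\hom,L}$: the Taylor residual produces covariances of the form $\mathrm{Cov}_L\!\bigl(\partial_c h(\omega_L(0)),c_F^L\bigr)$, whose sharp estimate requires a Malliavin computation together with a truncation of the dependence of the corrector on $\omega_L$ to a window of size $T\sim\ln L$, yielding the announced $(\ln L)/L$ rate after optimization in $T$.

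The principal obstacle will be the exponential integrability of $\mathcal{C}$ in~(i): the Herbst iteration requires deterministic sensitivity bounds on the \emph{higher-order} linearized correctors (in particular on the Malliavin derivative of $\varphi_{F,H}^L$), each enjoying the same dimensional scaling in $\dist$ as its first-order counterpart. This bootstrap relies crucially on the one-dimensional ODE structure, without which the global Lipschitz estimate is unavailable. A secondary technical point is the $\mathbb{P}_L$-a.s.\ well-posedness and continuity in $s\in[0,L]$ of $\partial_{\omega_L(s)}\varphi_F^L$, which must be established before the pointwise sensitivity computations are rigorously justified.
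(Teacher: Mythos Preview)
Your outline for part~(i) is essentially the paper's argument: apply the $q$-th moment periodic spectral gap (Lemma~\ref{L:SQq}) to $W_{\hom,L}$, $DW_{\hom,L}$ and $D^2W_{\hom,L}$, use the envelope theorem to drop the $\nabla\delta\varphi_F^L$--term for $W_{\hom,L}$, and extract the powers of $\dist(F,\SO d)$ from $\partial_\omega W(\cdot,R)=\partial_\omega DW(\cdot,R)=0$. The only cosmetic difference is that for the sensitivities of the derivatives the paper does not bound $\nabla\psi_s$ and $\nabla\partial_{\omega_L(s)}\varphi_{F,H}^L$ directly, but instead introduces periodic solutions $h,h_1$ of dual linearized problems and rewrites $\delta\mathcal F$ so that $\delta\omega_L$ appears only as a multiplier; in the laminate setting both routes give the same deterministic $O(L^{-1})$ bound on $\int_{B_1(s)}|\partial\mathcal F/\partial\omega_L|$ and hence the same exponential moments.

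Your approach to part~(ii) is genuinely different from the paper's. The paper never uses the pointwise inversion $\nabla\varphi_F=h(\omega(x_d),c_F)\otimes e_d$. Instead it introduces the massive localized correctors $\varphi_F^T,\varphi_{F,G}^T,\varphi_{F,G,H}^T$ of Lemma~\ref{lemma:388:new}, a proxy $W_{\eta,T}(F)=\int\eta\,W(\omega,F+\nabla\varphi_F^T)$, and the four--term decomposition~\eqref{eq:1107}. The localization error $|\mathbb E[W_{\eta,T}]-W_{\hom}|$ and its $\mathbb P_L$--counterpart are $\lesssim T^{-1/2}$ (Lemma~\ref{lem:1131}, resting on the stochastic corrector bounds of Lemmas~\ref{lemma:388}--\ref{lemma:13:58}); the periodization error $|\mathbb E_L[W_{\eta,T}(\pi_L\cdot)]-\mathbb E_L[W_{\eta,T}]|$ is $\lesssim\exp(-L/(c\sqrt T))$ (Lemma~\ref{lem:1279}, via Lemma~\ref{L:esttruncatedcorrector}). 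The logarithm is the cost of optimizing $\sqrt T\sim L/\ln L$ so that the exponential is $O(L^{-1})$.

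Your flux--constant scheme avoids this mechanism altogether, and your last paragraph is confused about its own output. The identity $\partial_c g=c^T\partial_c\bar h$ (from $DW(\omega,F+h\otimes e_d)e_d=c$), combined with $\bar h(\omega_L,c_F^L)=0$ and $\mathbb E_L[\bar h(\omega_L,c_F)]=\mathbb E[h(\omega(0),c_F)]=0$, kills the first--order term for $W_{\hom,L}$ exactly as you say; but for $D^\ell W_{\hom,L}$ the remaining terms are covariances of \emph{two spatial $L$--averages}, hence $O(L^{-1})$ by Cauchy--Schwarz and the spectral gap. No truncation window is needed, and a correct execution of your own argument gives the \emph{stronger} rate $L^{-1}$ (which of course implies the stated $(\ln L)/L$). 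Your ``$T\sim\ln L$'' is a garbled import of the paper's method, where in fact $\sqrt T\sim L/\ln L$; drop it and follow your covariance bound through.
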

The proof is presented in Section \ref{sec:rve}.

The total error is dominated by the random component of order $L^{-\frac{1}{2}}$. However, the deviations from the mean may be reduced by empirical averaging: For $N \in \N$, we consider  $\cb{\omega_i}_{i \in \cb{1,...,N}}$ $N$ independent copies of the coefficient field sampled according to $\mathbb{P}\lol$  and we define the Monte Carlo approximation by
\begin{equation*}
W_{\mathrm{hom},L,N}(\omega,F) = \frac{1}{N} \sum_{i=1}^{N} W_{\mathrm{hom},L}(\omega_i,F).
\end{equation*}
With Theorem~\ref{Theorem:2} at hand, we obtain the following:
\begin{corollary}[Monte Carlo approximation]\label{cor:T:2} Let the assumptions of Theorem\nobreakspace\ref{Theorem:2} be satisfied. There exists $c= c(\alpha,d,p,\rho)>0$ such that for all $F\in U_{\overline{\delta}}$ with $\overline \delta>0$ as in Theorem~\ref{Theorem:2} and for $\ell\in\{0,1,2\}$ 
\begin{align*}
\erwartunglol{|D^\ell W_{\mathrm{hom},L,N}(F) - D^\ell W_{\rm hom}(F)|^2}^\frac12 & \leq c \; \mathrm{dist}^{2-\ell}(F,\SO d)\biggl(\frac1{\sqrt{NL}}+\frac{\log(L)}{L}\biggr).
\end{align*}
\end{corollary}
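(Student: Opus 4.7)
\textbf{Proof plan for Corollary \ref{cor:T:2}.} The plan is to perform the usual bias/variance decomposition for a Monte Carlo estimator and plug in the two bounds provided by Theorem~\ref{Theorem:2}. Write
\begin{equation*}
D^\ell W_{\mathrm{hom},L,N}(F) - D^\ell W_{\mathrm{hom}}(F)
= \underbrace{\Bigl(D^\ell W_{\mathrm{hom},L,N}(F)-\mathbb E_L[D^\ell W_{\mathrm{hom},L}(F)]\Bigr)}_{=:R_N}
+\underbrace{\Bigl(\mathbb E_L[D^\ell W_{\mathrm{hom},L}(F)]-D^\ell W_{\mathrm{hom}}(F)\Bigr)}_{=:S_L}.
\end{equation*}
The systematic term $S_L$ is deterministic and bounded directly by Theorem~\ref{Theorem:2}(ii), giving $|S_L|\leq c\,\mathrm{dist}^{2-\ell}(F,\SO d)\,\tfrac{\log L}{L}$.

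For the random term, I would use that the samples $\omega_i$ are i.i.d.\ under $\mathbb P_L$, so
\begin{equation*}
R_N=\frac{1}{N}\sum_{i=1}^N X_i,\qquad X_i:=D^\ell W_{\mathrm{hom},L}(\omega_i,F)-\mathbb E_L[D^\ell W_{\mathrm{hom},L}(F)],
\end{equation*}
with $X_i$ centered and i.i.d. Hence $\mathbb E_L[|R_N|^2]=\tfrac{1}{N}\mathbb E_L[|X_1|^2]$. By Theorem~\ref{Theorem:2}(i), $|X_1|\leq \mathcal C\,\mathrm{dist}^{2-\ell}(F,\SO d)L^{-1/2}$ with $\mathbb E_L[\exp(\tfrac{1}{c}\mathcal C)]\leq 2$; in particular $\mathcal C$ has bounded second moments by a pure numerical constant (times $c^2$). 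This yields
\begin{equation*}
\mathbb E_L[|R_N|^2]^{1/2}\leq c\,\mathrm{dist}^{2-\ell}(F,\SO d)\,\frac{1}{\sqrt{NL}}.
\end{equation*}

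Combining the two estimates by the triangle inequality in $L^2(\Omega_L)$ (noting that $S_L$ contributes deterministically so $\mathbb E_L[|S_L|^2]^{1/2}=|S_L|$) gives the claimed bound. There is no serious obstacle here: the only care needed is to observe that exponential integrability of $\mathcal C$ implies the required uniform control on $\mathbb E_L[\mathcal C^2]$, and that the $N$-independence of the samples is what produces the $\tfrac{1}{\sqrt N}$ gain in the fluctuation term, while the systematic error is unaffected by Monte Carlo averaging.
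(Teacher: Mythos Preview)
Your proposal is correct and is precisely the standard bias/variance decomposition the paper has in mind; indeed the paper simply writes ``The proof of Corollary~\ref{cor:T:2} is standard and thus omitted here.'' The only minor remark is that for $\ell\in\{1,2\}$ the quantities $X_i$ are tensor-valued, but the identity $\mathbb E_L[|R_N|^2]=\tfrac{1}{N}\mathbb E_L[|X_1|^2]$ still holds componentwise (or for the Frobenius norm), so no change is needed.
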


The proof of Corollary~\ref{cor:T:2} is standard and thus omitted here. Note that the total $L^2$-error is of order $L^{-1}\ln L$ if we choose $N \sim \frac{L}{\ln^2 L}$.
\section{Deterministic intermediate results}\label{sec:determ}
In the following we present the deterministic ingredients of our analysis. In particular, in the first part we recall a matching convex lower bound construction that allows us to relate the nonconvex corrector problem with a convex corrector problem. In the second part we collect some standard results about elliptic equations, and in the third part we establish deterministic Lipschitz estimates for correctors that rely on the laminate structure of the material.
\subsection{Reduction to a convex problem}

The proof of Theorem~\ref{T:1:0} follows the strategy of \cite{neukamm2018quantitative}. The starting point is the observation that $W\in\mathcal W_\alpha^p$ implies the existence of a ``matching convex lower bound''. For the precise statement we introduce the following class of strongly convex functions:
\begin{definition}[Convex energy density]\label{def:vbeta}
  For $\beta>0$ we denote by $\mathcal V_{\beta}$ the set of functions $V\in C^2(\R^{d\times d})$ satisfying for all $F,G\in\R^{d\times d}$
   \begin{align*}
   &\beta |F|^2-\frac{1}{\beta}\leq V(F)\leq \frac{1}{\beta}(|F|^2+1),\\
      & |DV(F)[G]|\leq \frac1\beta(1+|F|)|G|,\\
   &\beta|G|^2\leq D^2V(F)[G,G]\leq \frac{1}{\beta}|G|^2. 
 \end{align*}
\end{definition}

The following lemma is proven in \cite{neukamm2018quantitative} extending a construction that appeared earlier in the context of discrete energies in \cite{CDKM06,FT02}.
\begin{lemma}[Matching convex lower bound, see {\cite[Corollary~2.3]{neukamm2018quantitative}}]\label{C:wv} Let
 Assumption~\ref{ass:W:1} be satisfied. Then there exist $\delta,\mu,\beta>0$ (depending on $\alpha,d$ and $p$), and a Borel measurable function $V: \R \times \R^{d \times d} \to \R$ satisfying for almost every $\omega \in\R$, $V(\omega,\cdot)\in \mathcal V_\beta$, $V(x,\cdot)\in C^3(\R^{d\times d})$, and 
\begin{align}
  &W(\omega,F)+\mu \det F\ \geq V (\omega,F)\qquad\mbox{for all $F\in \R^{d\times d}$,}\label{WgeqV}\\
  &W(\omega,F)+\mu \det F\ =V (\omega,F)\qquad \mbox{for all }F\in\R^{d\times d}\mbox{ with }\dist(F,\SO d)<\delta,\label{W=V}\\
  &V(\omega,RF)=V(\omega,F)\qquad\text{for all }F\in \R^{d\times d}\mbox{ and }R\in\SO d.\notag
\end{align}
\end{lemma}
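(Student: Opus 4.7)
My plan follows the strategy introduced for discrete energies in \cite{FT02,CDKM06} and transferred to continuous elasticity in \cite{neukamm2018quantitative}. It has two stages: first make $W(\omega,\cdot)+\mu\det$ strongly convex on a neighborhood of $\SO d$ by exploiting frame indifference, then extend this local object to a globally strongly convex $C^3$ function that lies below $W+\mu\det$ and matches it on a slightly smaller neighborhood.

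For the local step, I would show that there exist $\mu\in(0,2\alpha)$ and $\delta_0,\beta_0>0$ depending only on $\alpha,d$ such that $D^2(W(\omega,\cdot)+\mu\det)(F)\geq\beta_0\Id$ on $U_{\delta_0}$ for a.e.\ $\omega$. At $F=R\in\SO d$ this reduces to algebra. Writing any perturbation as $G=(A+S)R$ with $A^T=-A$ and $S^T=S$, frame indifference (W2) combined with the minimum property $W(\omega,R)=0$ forces $D^2W(\omega,R)$ to annihilate the tangent directions $AR$ and, by symmetry of the Hessian, their cross terms with $SR$, while (W3) and (W4) give $D^2W(\omega,R)[SR,SR]\geq 2\alpha|S|^2$. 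A direct computation using that $R^TAR$ is skew and $R^TSR$ is symmetric yields $D^2(\mu\det)(R)[G,G]=\mu|A|^2+\mu((\mathrm{tr}\,S)^2-|S|^2)$ with vanishing cross term. Combining,
\[
D^2(W(\omega,\cdot)+\mu\det)(R)[G,G]\geq(2\alpha-\mu)|S|^2+\mu|A|^2\geq\min(2\alpha-\mu,\mu)|G|^2,
\]
so choosing $\mu=\alpha$ gives $\beta_0=\alpha$. Compactness of $\SO d$ together with the $C^3$-bound in (W4) extends this uniform Hessian bound to the full neighborhood $U_{\delta_0}$.

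For the global extension, I would combine $\tilde V(\omega,F):=W(\omega,F)+\mu\det F$, which is strongly convex on $U_{\delta_0}$, with a strongly convex quadratic majorant $Q(F)=\beta|F|^2+c$ chosen so that $Q\leq W+\mu\det$ everywhere; such a $Q$ exists because (W1) gives $W(\omega,F)\gtrsim|F|^p$ while $|\mu\det F|\lesssim|F|^d$ with $p\geq d$, and $\tilde V$ is bounded on $U_{\delta_0}$. Following \cite{neukamm2018quantitative}, one may realize $V$ as the supremum of affine supporting hyperplanes of $\tilde V$ on $U_{\delta_0}$ augmented with $Q$, followed by a mollification at scale $\ll\delta_0-\delta$. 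The resulting $V(\omega,\cdot)$ belongs to $\mathcal V_\beta$, is $C^3$, coincides with $\tilde V$ on a smaller neighborhood $U_\delta$ because $\tilde V$ is already strongly convex there and equals its own convex envelope, lies below $W+\mu\det$ globally by construction, and inherits frame indifference provided the extension is performed $\SO d$-equivariantly (e.g.\ in terms of $F^TF$ on $U_{\delta_0}$, where the polar decomposition is smooth). Borel measurability in $\omega$ is inherited from that of $W$ since the construction depends on $\omega$ only through $\tilde V$. The main obstacle is this extension step: one must enforce global strong convexity with uniform constants, quadratic upper and lower bounds, $C^3$ regularity, exact matching on $U_\delta$, and frame indifference simultaneously, which forces $U_\delta$ to be chosen strictly inside $U_{\delta_0}$ so that the mollification needed for $C^3$ smoothness does not disturb the matching property.
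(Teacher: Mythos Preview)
The paper does not give its own proof of this lemma; it simply cites \cite[Corollary~2.3]{neukamm2018quantitative} and remarks that the construction goes back to \cite{FT02,CDKM06}. Your sketch follows precisely that reference: the local strong convexity of $W(\omega,\cdot)+\mu\det$ on a neighborhood of $\SO d$ via the skew/symmetric decomposition and the Hessian computation for $\det$, and then the global extension by taking the supremum of affine supports augmented with a suitable quadratic and mollifying. Your computation at $R\in\SO d$ is correct, including the identification $D^2(\mu\det)(R)[G,G]=\mu|A|^2+\mu((\operatorname{tr}S)^2-|S|^2)$ and the use of the non-negativity of $D^2W(\omega,R)$ to kill the cross terms. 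So your proposal is correct and is the same route the paper (via the cited reference) takes; you have also correctly identified that the delicate part is the simultaneous enforcement of $C^3$-regularity, uniform strong convexity, quadratic growth, exact matching on $U_\delta$, and frame indifference in the extension step, which is why $\delta$ must be chosen strictly smaller than $\delta_0$.
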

As a consequence of the matching property \eqref{W=V} together with the strong convexity of $V$ and the fact that $\det$ is a Null-Lagrangian, we obtain:
\begin{corollary}\label{cor:613}
We consider the assumptions and setting of Lemma\nbs\ref{C:wv}. Let $\omega \in \Omega$ and $\varphi \in W^{1,p}_{\mathrm{loc}}(\R^d; \R^d)$ satisfy
\begin{equation}\label{eq:615:3}
\dist(F+ \nabla\varphi(\cdot),\SO d)< \delta \quad \text{a.e.}
\end{equation}
Then the following statements hold:
\begin{enumerate}[label=(\roman*)]
\item For every $u\in H_{\rm uloc}^1(\R^d;\R^d)$ and $v,w\in W_{\rm loc}^{1,\infty}(\R^d;\R^d)$, the following identities hold in the sense of distributions
\begin{align}
\dive DW(\omega(x_d),F+\nabla \varphi)=&\dive DV(\omega(x_d),F+\nabla \varphi),\label{eq:cor:613:1}\\
\dive (D^2W(\omega(x_d),F+\nabla \varphi)\nabla u)=&\dive (D^2V(\omega(x_d),F+\nabla \varphi)\nabla u),\label{eq:cor:613:2}\\
\dive (D^3W(\omega(x_d),F+\nabla \varphi)\nabla v\cdot \nabla w)=&\dive (D^3V(\omega(x_d),F+\nabla \varphi)\nabla v\cdot \nabla w).\label{eq:cor:613:3}
\end{align}
\item There exists $c=c(\alpha,d,p)>0$ such that $\mathbb{L}(x):=D^2W(\omega(x_d),F+ \nabla \varphi(x))$ satisfies
\begin{align*}
\int_{\R^d} |\nabla \eta|^2 dx & \leq c \int_{\R}\mathbb{L}(x)\nabla \eta \cdot \nabla \eta dx \quad \text{for all }\eta \in C^{\infty}_{c}(\R^d; \R^d),\\
|\mathbb{L}(\cdot)| & \leq c \quad \text{a.e.}
\end{align*} 
\end{enumerate} 
\end{corollary}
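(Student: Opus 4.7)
The plan is to use the matching identity of Lemma~\ref{C:wv} together with the Null-Lagrangian property of $\det$. Since $V(\omega,\cdot)\in C^2(\R^{d\times d})$ and $W(\omega,\cdot)\in C^3(\overline{U_\alpha})$, I may shrink $\delta$ so that $\delta\le\alpha$ and differentiate the matching identity $W(\omega,G)+\mu\det G=V(\omega,G)$ pointwise in $G$ on $U_\delta$, obtaining
\[
D^kW(\omega,G)=D^kV(\omega,G)-\mu D^k\det(G),\qquad k=1,2,3.
\]
By \eqref{eq:615:3} these identities hold pointwise a.e.\ at $G=F+\nabla\varphi(x)$, so \eqref{eq:cor:613:1}--\eqref{eq:cor:613:3} reduce to the three distributional Null-Lagrangian identities
\[
\dive D\det(F+\nabla\varphi)=0,\quad \dive\!\big(D^2\det(F+\nabla\varphi)\nabla u\big)=0,\quad \dive\!\big(D^3\det(F+\nabla\varphi)\nabla v\cdot\nabla w\big)=0.
\]

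I would establish these from the classical fact that $\psi\mapsto \int_{\R^d}\det(\nabla\psi)\,dx$ is invariant under compactly supported perturbations: for smooth $\psi$ and $\xi_1,\ldots,\xi_k\in C_c^\infty(\R^d;\R^d)$, the polynomial $(t_1,\ldots,t_k)\mapsto \int\det(\nabla(\psi+\sum_i t_i\xi_i))\,dx$ is constant, hence all its mixed partial derivatives at zero vanish and $\int D^k\det(\nabla\psi)[\nabla\xi_1,\ldots,\nabla\xi_k]\,dx=0$. To transfer this to the setting at hand, I first note that \eqref{eq:615:3} together with the boundedness of $\overline{U_\delta}$ forces $\nabla\varphi\in L^\infty(\R^d;\R^{d\times d})$, so all coefficients $D^k\det(F+\nabla\varphi)$ are bounded measurable; mollifying $\varphi$ and passing to the limit by dominated convergence then delivers the identities for smooth, compactly supported test functions. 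For the non-compactly-supported arguments $u\in H^1_{\mathrm{uloc}}$ and $v,w\in W^{1,\infty}_{\mathrm{loc}}$, I would cut off with $\chi\in C_c^\infty$ equal to $1$ on $\mathrm{supp}(\eta)$: since the distributional pairing against $\eta\in C_c^\infty$ only sees the values of its arguments on $\mathrm{supp}(\eta)$, replacing $u,v,w$ by $\chi u,\chi v,\chi w$ does not alter the integral, and the compactly-supported case just handled applies.

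For part (ii), the bound $|\mathbb{L}|\le c$ is immediate from $\|W(\omega,\cdot)\|_{C^3(\overline{U_\alpha})}\le 1/\alpha$ and $F+\nabla\varphi\in U_\delta\subset U_\alpha$. The coercivity follows by testing the $k=2$ pointwise identity against $\nabla\eta\otimes\nabla\eta$ for $\eta\in C_c^\infty$ and using the Null-Lagrangian identity (with $u=\eta$) to drop the $\det$ term:
\[
\int_{\R^d}\mathbb{L}\nabla\eta\cdot\nabla\eta\,dx=\int_{\R^d}D^2V(\omega(x_d),F+\nabla\varphi)[\nabla\eta,\nabla\eta]\,dx\ge \beta\int_{\R^d}|\nabla\eta|^2\,dx,
\]
where the last step is the uniform convexity bound from Definition~\ref{def:vbeta}. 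Taking $c=\max(\beta^{-1},\alpha^{-1})$ yields the claim. The main technical hurdle is the third-order identity \eqref{eq:cor:613:3}, which involves two non-compactly-supported gradient factors and requires tracking $v$ and $w$ simultaneously through the mollification-plus-cutoff reduction; once that is in place, the remaining verifications are routine.
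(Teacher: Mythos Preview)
Your proof is correct. Both your approach and the paper's rest on the matching identity \eqref{W=V} together with the Null-Lagrangian property of $\det$, and for part (ii) the two arguments are identical. The one difference in part (i) is organizational: rather than establishing the higher-order Null-Lagrangian identities $\int D^k\det(F+\nabla\varphi)[\nabla\xi_1,\ldots,\nabla\xi_k]\,dx=0$ directly as you do, the paper obtains \eqref{eq:cor:613:2} and \eqref{eq:cor:613:3} by a difference-quotient bootstrap from \eqref{eq:cor:613:1}. For smooth $u$ and a fixed test function $\eta\in C_c^\infty$, one has $F+\nabla\varphi+h\nabla u\in U_\delta$ on $\mathrm{supp}\,\eta$ for all small $h$, so the pointwise matching yields
\[
\frac1h\int_{\R^d}\big(DW(\omega,F+\nabla\varphi+h\nabla u)-DW(\omega,F+\nabla\varphi)\big)\cdot\nabla\eta\,dx
=\frac1h\int_{\R^d}\big(DV(\ldots)-DV(\ldots)\big)\cdot\nabla\eta\,dx,
\]
and sending $h\to 0$ gives \eqref{eq:cor:613:2}; iterating gives \eqref{eq:cor:613:3}. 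This only invokes the first-order Null-Lagrangian identity (applied at the perturbed base point $\varphi+hu$) and thereby sidesteps the mollification/cutoff bookkeeping you flag for the third-order case, at the cost of a final approximation from smooth $u$ to $H^1_{\mathrm{uloc}}$. Your route is more transparent in that it isolates exactly which identity for $D^k\det$ is doing the work at each order. One small point of phrasing: the integral $\int_{\R^d}\det(\nabla(\psi+\sum_i t_i\xi_i))\,dx$ need not converge; what is constant (and what you differentiate) is the difference $\int_{\R^d}\big[\det(\nabla(\psi+\sum_i t_i\xi_i))-\det(\nabla\psi)\big]\,dx$, whose integrand is compactly supported.
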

The proof is presented in Section\nbs\ref{det:proofs}. 

This corollary implies that the corrector equations corresponding to $W$ and $V$ have the same solutions for $F$ in a sufficiently small neighborhood of $\SO d$, see, e.g., the proof of Lemma \ref{lemma:388:new}. In particular, this also implies that 
\begin{equation*}
W_{\mathrm{hom},L}(F)+ \mu \det(F) = V_{\mathrm{hom},L}(F), \quad W_{\mathrm{hom}}(F) + \mu \det(F) = V_{\mathrm{hom}}(F),
\end{equation*}
where $V_{\mathrm{hom}}$ is the homogenized integrand corresponding to $V$ and $V_{\mathrm{hom},L}$ its corresponding RVE approximation, see Section \ref{sec:pr:qual} for the details. 
\subsection{Standard estimates for elliptic equations}

We formulate three simple deterministic existence, uniqueness and regularity statements for uniformly elliptic equations. 

\begin{definition}
Let $d,m \in \N$. For given $0<\lambda\leq\Lambda<\infty$ we denote by $\mathcal A_{\lambda}^\Lambda$ the set of functions $A:\R^{m\times d}\to\R^{m\times d}$ satisfying for all $F_1,F_2\in\R^{m\times d}$.
 \begin{align}
\begin{split}\label{eq:202}
\brac{A(F_1)-A(F_2)}\cdot (F_1-F_2)\geq \lambda |F_1-F_2|^2,\\
|A(F_1)-A(F_2)|\leq \Lambda |F_1-F_2|, \quad A(0)=0.
\end{split}
\end{align}
\end{definition}

\begin{lemma}\label{lemma:222} For $d,m \in \N$, let $A: \R^{d} \times \R^{m\times d} \to \R^{m\times d}$ be Borel measurable and there exists $0<\lambda\leq\Lambda<\infty$ such that $A(x,\cdot)\in\mathcal A_\lambda^\lambda$ for all $x\in \R^d$. Then the following statements hold:
\begin{enumerate}[label=(\roman*)]
\item For every $g \in L^2(\R^d;\R^{m\times d})$, $f\in L^2(\R^d;\R^m)$ and $T>0$ there exists a unique weak solution $\varphi \in H^1(\R^d;\R^m)$ of
\begin{equation}\label{eq:218}
\frac{1}{T}\varphi- \dive A(x,\nabla \varphi) = \dive g + f \quad \text{in }\R^d,
\end{equation}
and it holds
\begin{equation}\label{est:monotonesystemfull}
\int_{\R^d}\frac{1}{T}|\varphi|^2+ \lambda|\nabla \varphi|^2 dx \leq  \int_{\R^d}\frac1\lambda|g|^2 + T |f|^2 dx.
\end{equation}
\item For every $g \in L^{\infty}(\R^d;\R^{m\times d})$, $f\in L^2(\R^d;\R^m)$ and $T>0$, there exists a unique $\varphi \in H^1_{\mathrm{uloc}}(\R^d;\R^d)$ satisfying equation \eqref{eq:218}. Moreover, there exists $c\in[1,\infty)$ such that for all $x_0 \in \R^d$, $R\geq \sqrt{T}$,
\begin{align*}
\int_{\R^d}(\frac{1}{T} |\varphi|^2 + \lambda|\nabla \varphi|^2)\eta dx & \leq c \int_{\R^d}  \biggl(\frac1\lambda|g|^2+T|f|^2\biggr) \eta dx
\end{align*}
where $\eta: \R^d\to \R$ is given by $\eta(x)=\exp(-\frac{\gamma}{R}|x-x_0|)$ with $\gamma= \gamma(\frac{\lambda}{\Lambda})\in(0,1]$. In particular, we have
\begin{equation}\label{eq:378}
\dashint_{B_{R}(x_0)}\frac{1}{T}|\varphi|^2 + \lambda |\nabla \varphi|^2 dx \leq c \brac{\frac1{\lambda\gamma^d}\norm{g}^2_{L^{\infty}(\R^d)}+\frac{1}{R^d} \int_{\R^d}T \eta |f|^2 dx}.
\end{equation}
\end{enumerate}
\end{lemma}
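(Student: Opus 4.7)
For part (i), the plan is to apply standard monotone operator theory. The operator $\mathcal L\colon H^1(\R^d;\R^m)\to H^{-1}(\R^d;\R^m)$ given by $\mathcal L\varphi:=\tfrac{1}{T}\varphi-\dive A(\cdot,\nabla\varphi)$ is strongly monotone and Lipschitz because $A(x,\cdot)\in\mathcal A_\lambda^\Lambda$, so the Browder--Minty theorem produces a unique weak solution of $\mathcal L\varphi=\dive g+f$. The energy estimate \eqref{est:monotonesystemfull} then follows by testing with $\varphi$ itself: strong monotonicity combined with $A(x,0)=0$ gives $A(x,\nabla\varphi)\cdot\nabla\varphi\geq\lambda|\nabla\varphi|^2$, and two Young inequalities absorb $g\cdot\nabla\varphi$ into $\tfrac{\lambda}{2}|\nabla\varphi|^2+\tfrac{1}{2\lambda}|g|^2$ and $f\cdot\varphi$ into $\tfrac{1}{2T}|\varphi|^2+\tfrac{T}{2}|f|^2$.

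For part (ii), the core ingredient is a weighted a priori estimate, which I would obtain by testing the equation with $\varphi\eta$ where $\eta(x)=\exp(-\tfrac{\gamma}{R}|x-x_0|)$ satisfies $|\nabla\eta|\leq\tfrac{\gamma}{R}\eta$. Expanding $\nabla(\varphi\eta)=\eta\,\nabla\varphi+\varphi\otimes\nabla\eta$, strong monotonicity contributes $\lambda|\nabla\varphi|^2\eta$ on the left, while the cross term $A(x,\nabla\varphi)\cdot(\varphi\otimes\nabla\eta)$ is controlled by $\Lambda\tfrac{\gamma}{R}|\nabla\varphi||\varphi|\eta$ and split via Young into $\tfrac{\lambda}{4}|\nabla\varphi|^2\eta+\tfrac{\Lambda^2\gamma^2}{\lambda R^2}|\varphi|^2\eta$. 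Since $R\geq\sqrt{T}$ forces $T/R^2\leq 1$, choosing $\gamma=\gamma(\lambda/\Lambda)\in(0,1]$ small enough dominates this error by $\tfrac{1}{2T}|\varphi|^2\eta$, which is absorbed into the mass term on the left. Analogous splittings handle the forcing terms $g\cdot\nabla(\varphi\eta)$ and $f\varphi\eta$. The ball-averaged estimate \eqref{eq:378} then drops out from $\eta\geq e^{-1}$ on $B_R(x_0)$ combined with $\int_{\R^d}\eta\lesssim\gamma^{-d}R^d$.

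Existence in $H^1_{\rm uloc}$ I would obtain by approximation: set $g_n:=g\chi_{B_n(0)}\in L^2$, apply part (i) to obtain $\varphi_n\in H^1(\R^d)$, and invoke the weighted estimate (uniformly in $n$ for fixed $\eta$) to deduce a uniform $H^1_{\rm uloc}$ bound. Weak compactness in $H^1_{\rm loc}$ produces a candidate limit $\varphi$; the nonlinear flux is identified via the Minty monotonicity trick, relying on Rellich for $L^2_{\rm loc}$ strong convergence. Uniqueness follows by subtracting two $H^1_{\rm uloc}$-solutions and running the same weighted test on the difference $\psi=\varphi_1-\varphi_2$, this time using the incremental monotonicity $(A(x,\nabla\varphi_1)-A(x,\nabla\varphi_2))\cdot\nabla\psi\geq\lambda|\nabla\psi|^2$, which forces $\psi\equiv 0$. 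The main obstacle I expect is the rigorous justification of $\varphi\eta$ as a test function when $\varphi\in H^1_{\rm uloc}$ is not globally integrable: one must first truncate $\eta$ against a smooth compactly supported cutoff and pass to the limit using exponential decay of $\eta$ and the uniformly-local bound on $\varphi$. Once this admissibility is established, the absorption algebra is routine.
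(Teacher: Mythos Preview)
Your proposal is correct and matches the paper's approach exactly: the paper simply states that part (i) is completely standard and that part (ii) follows by testing \eqref{eq:218} with $\eta\varphi$ for a suitable choice of $\gamma$, referring to \cite[Lemma~36]{fischer2019optimal} for details. Your write-up fills in precisely those details, including the absorption via $R\geq\sqrt T$ and the derivation of \eqref{eq:378} from $\eta\geq e^{-\gamma}$ on $B_R(x_0)$ together with $\int_{\R^d}\eta\lesssim \gamma^{-d}R^d$.
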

Part (i) is completely standard and part (ii) follows by testing \eqref{eq:218} with $\eta\varphi$ and a suitable choice of $\gamma$, see {\cite[Lemma~36]{fischer2019optimal} for details.

The following simple lemma is central to the analysis of the present paper. Based on the observation that in the case of laminates the PDE \eqref{eq:218} reduces to an ODE, we establish the following crucial Lipschitz estimate:
\begin{lemma}\label{lemma:279} 
Let $d \in \N$. Let $A: \R \times \R^{d\times d}\to \R^{d\times d}$ be a Borel measurable  function and suppose that there exist $\Lambda\geq\lambda>0$ such that $A(z,\cdot)\in\mathcal A_\lambda^\Lambda$ for all $z\in \R$. For $T>0$ and $g\in L^{\infty}(\R)^{d\times d}$, let $\varphi\in H^1_{{\rm uloc}}(\R^d;\R^d)$ be the unique solution to
\begin{equation*}
\frac{1}{T} \varphi(x) - \dive A(x_d,\nabla \varphi(x)) = \dive (g(x_d)) \quad \text{in }\R^d.
\end{equation*}
Then the following statements are true:
\begin{enumerate}[label=(\roman*)]
\item $\varphi$ is one-dimensional in the sense that $\varphi(x)=\varphi(x_d)$ and it solves 
\begin{equation}\label{eq:616}
\frac{1}{T} \varphi(x_d) - \partial_{x_d} a(x_d,\partial_d \varphi(x_d))= \partial_{x_d}g_d(x_d) \quad \text{in }\R,
\end{equation}
where $a: \R \times \R^d \to \R^d$ is given by $a(x_d,f)= A(x_d,f\otimes e_d)e_d$, and $g_d(x_d)=g(x_d)e_d$.

If we replace $g\in L^{\infty}(\R; \R^{d\times d})$ by $g \in L^2(\R; \R^{d\times d})$, the analogous claim holds for the weak solution of the equation.
\item There exists $c=c(\frac\Lambda\lambda)\in[1,\infty)$ such that
\begin{equation}\label{lemma:279:est}
\norm{\nabla \varphi}_{L^{\infty}(\R^d)} \leq \frac{c}\lambda \norm{g}_{L^{\infty}(\R)}.
\end{equation}
\end{enumerate}
\end{lemma}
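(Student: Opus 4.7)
\textbf{Proof plan for Lemma~\ref{lemma:279}.}

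\emph{Part (i) (one-dimensionality).} I plan to exploit the uniqueness from Lemma~\ref{lemma:222}~(ii). Since both $A(x_d,\cdot)$ and $g(x_d)$ depend only on $x_d$, for every $y'\in\R^{d-1}\times\{0\}$ the shift $\tilde\varphi(x):=\varphi(x+y')$ lies in $H^1_{\mathrm{uloc}}(\R^d;\R^d)$ and satisfies the same equation with the same right-hand side. Uniqueness then forces $\tilde\varphi=\varphi$ a.e., so $\varphi$ is independent of the first $d-1$ coordinates, i.e.\ $\varphi(x)=\varphi(x_d)$. Plugging $\nabla\varphi=\partial_{x_d}\varphi\otimes e_d$ into the PDE and using that $\dive B(x_d)=\partial_{x_d}(B(x_d)e_d)$ for any matrix depending only on $x_d$ yields the 1D ODE \eqref{eq:616}. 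The $L^2$-case is analogous, using Lemma~\ref{lemma:222}~(i) for uniqueness.

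\emph{Part (ii) (Lipschitz bound).} By (i), $\norm{\nabla\varphi}_{L^\infty(\R^d)}=\norm{\partial_d\varphi}_{L^\infty(\R)}$ and we work purely in 1D. A rescaling $A\mapsto A/\lambda$, $T\mapsto\lambda T$, $g\mapsto g/\lambda$ leaves both the equation and the target bound invariant, so we may assume $\lambda=1$ and the bound to be proved becomes $\norm{\partial_d\varphi}_\infty\leq c(\Lambda)\norm{g}_\infty$. The key observation is that
\[
h(x_d):=a(x_d,\partial_d\varphi(x_d))+g_d(x_d)
\]
satisfies $h'=\varphi/T$ in $\mathcal D'(\R)$ (a direct rewriting of \eqref{eq:616}), and the strong monotonicity of $a(x_d,\cdot)\in\mathcal A^{\Lambda}_{1}$ together with $a(x_d,0)=0$ gives $|a(x_d,\xi)|\geq|\xi|$. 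Hence
\[
|\partial_d\varphi(x_d)|\leq |h(x_d)-g_d(x_d)|\leq |h(x_d)|+\norm{g}_\infty,
\]
reducing the task to the pointwise bound $\norm{h}_{L^\infty(\R)}\leq c(\Lambda)\norm{g}_\infty$.

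For the bound on $h$ I use the identity $h(x)=h(y)+\int_y^x\varphi/T\,ds$ and average over $y\in J:=[x,x+\sqrt T]$ to obtain
\[
|h(x)|\leq \dashint_J|h(y)|\,dy+\int_J|\varphi|/T\,ds.
\]
Both terms are controlled by $c(\Lambda)\norm{g}_\infty$ via the weighted $L^2$-energy estimate of Lemma~\ref{lemma:222}~(ii) at scale $R=\sqrt T$. A direct Fubini computation --- using that $\varphi$ depends only on $x_d$ and that the cross-sectional weight $|B^{d-1}_{\sqrt{T-s^2}}|/|B^d_{\sqrt T}|$ is of order $1/\sqrt T$ on $s\in[-\sqrt T/2,\sqrt T/2]$ --- transfers the $d$-dimensional average into the uniform (in $T$ and in the position of $J$) 1D estimates $\dashint_J|\partial_d\varphi|^2\leq c\norm{g}_\infty^2$ and $\int_J\varphi^2\leq cT^{3/2}\norm{g}_\infty^2$. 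The first, combined with $|h|\leq \Lambda|\partial_d\varphi|+\norm{g}_\infty$, controls $\dashint_J|h|$; the second, via Cauchy--Schwarz, yields $\int_J|\varphi|/T\leq T^{1/4}\cdot T^{3/4}\cdot T^{-1}\cdot\sqrt c\,\norm{g}_\infty=\sqrt c\,\norm{g}_\infty$, independent of $T$.

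\emph{Main obstacle.} The crux is the $T$-uniformity of $\int_J|\varphi|/T$. Although $\norm{\varphi}_{L^\infty}$ can grow like $\sqrt T$ for large $T$ (as seen already in the linear Green's-function example), the specific choice $|J|=\sqrt T$ produces the exact cancellation $T^{1/4}\cdot T^{3/4}\cdot T^{-1}=1$ that makes the ODE-integration term bounded independent of $T$. Identifying this natural scale --- rather than unit length or a $T$-dependent one --- is the key insight that allows a purely deterministic global Lipschitz bound in the laminate setting, and is what ultimately distinguishes the 1D argument here from the general $d$-dimensional case where only a stochastic, large-scale analogue is available.
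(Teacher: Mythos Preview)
Your proof is correct and shares the same skeleton as the paper's: identical treatment of part~(i) via uniqueness under transversal shifts, the same normalization $\lambda=1$, the same ``flux'' variable $h=\Psi:=a(\cdot,\partial_d\varphi)+g_d$ with the key identity $\Psi'=\varphi/T$, and the same use of the energy estimate \eqref{eq:378} at scale $R=\sqrt T$.

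Where you diverge is in extracting a pointwise bound on $\Psi$. The paper runs a Campanato-style argument: it bounds the oscillation $\dashint_{r\tilde B}|\Psi-(\Psi)_{r\tilde B}|^2\lesssim r\|g\|_\infty^2$ via Poincar\'e and $\Psi'=\varphi/T$, and then controls the mean $|(\Psi)_{r\tilde B}|$ by a dyadic telescoping sum over scales $q^k\sqrt T$. You instead use the fundamental theorem of calculus directly---write $h(x)=h(y)-\int_x^y\varphi/T$, average over $y$ in an interval of length $\sqrt T$, and bound the two resulting terms by the $L^2$-energy estimate. Your route is more elementary: it bypasses the dyadic iteration entirely and makes the scaling cancellation $T^{1/4}\cdot T^{3/4}\cdot T^{-1}=1$ transparent. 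The paper's Campanato approach, on the other hand, is closer in spirit to large-scale regularity arguments and would generalize more readily if one wanted finer oscillation information (e.g.\ H\"older-type control on $\Psi$), but for the Lipschitz bound stated here your argument is the shorter path.
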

The proof is presented in Section\nobreakspace\ref{det:proofs}.

\begin{lemma}\label{lem:425} Let $L> 0$ and $d,m \in \N$. Let $A: (0,L) \times \R^{d\times d}\to \R^{d\times d}$ be a Borel measurable  function and suppose that there exist $\Lambda\geq\lambda>0$ such that $A(z,\cdot)\in\mathcal A_\lambda^\Lambda$ for all $z\in(0,L)$. For given $g\in L^\infty((0,L);\R^{d\times d})$, there exists $\varphi \in H^1_{\mathrm{per}}(\square_L;\R^d)$ a unique weak solution to
\begin{equation*}
-\dive A(x_d,\nabla \varphi(x)) = \dive \brac{g(x_d)} \quad \text{in }\square_L
\end{equation*}
satisfying $\dashint_{\square_L}\varphi \; dx = 0$.
Moreover, $\varphi$ is one-dimensional in the sense that $\varphi(x) = \varphi(x_d)$ and it weakly solves 
\begin{equation*}
- \partial_{x_d} a(x_d,\partial_d \varphi(x_d))= \partial_{x_d}g_d(x_d) \quad \text{in }\R,
\end{equation*}
where $a$ and $g_d$ are defined as in Lemma~\ref{lemma:279}. Also, there exists $c=c(\lambda,\Lambda)\in[1,\infty)$ such that
\begin{equation*}
\norm{\nabla \varphi}_{L^{\infty}(\square_L)} \leq c\norm{g}_{L^{\infty}(0,L)}.
\end{equation*}
\end{lemma}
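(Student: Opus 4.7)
The plan is to combine a standard monotone-operator existence argument with the one-dimensional reduction already used in Lemma~\ref{lemma:279}, followed by explicit inversion of the resulting ODE. First, for existence and uniqueness, I would consider the weak formulation: find $\varphi \in H^1_{\mathrm{per},0}(\square_L;\R^d)$ satisfying $\int_{\square_L} A(x_d,\nabla\varphi):\nabla\psi\,dx = -\int_{\square_L} g(x_d):\nabla\psi\,dx$ for all $\psi \in H^1_{\mathrm{per}}(\square_L;\R^d)$. Because $A(x_d,\cdot) \in \mathcal A_\lambda^\Lambda$ uniformly in $x_d$, the induced operator on $H^1_{\mathrm{per},0}$ is strongly monotone (via the Poincar\'e inequality for periodic, mean-zero functions) and Lipschitz continuous, so Browder--Minty yields a unique $\varphi$.

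Next I would extract the one-dimensional structure: since $A(\cdot,\cdot)$ and $g(\cdot)$ depend only on $x_d$, for every $j\in\{1,\dots,d-1\}$ and $z\in\R$ the translate $\varphi(\cdot + ze_j)$ still lies in $H^1_{\mathrm{per},0}(\square_L;\R^d)$ and solves the same equation; uniqueness then forces $\varphi(\cdot + ze_j)=\varphi$, so $\varphi$ depends only on $x_d$. Testing the equation against $\psi(x)=\eta(x_d)v$ for a smooth $L$-periodic scalar $\eta$ and an arbitrary $v\in\R^d$ then reduces the PDE to the identity $a(x_d,\partial_{x_d}\varphi(x_d)) + g_d(x_d) = c$ a.e.\ on $(0,L)$, for some integration constant $c\in\R^d$, where $a(x_d,f):=A(x_d,f\otimes e_d)e_d$ as in Lemma~\ref{lemma:279}.

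Finally, for the $L^\infty$ bound I would use that the assumption $A(x_d,\cdot)\in\mathcal A_\lambda^\Lambda$ transfers to $a(x_d,\cdot):\R^d\to\R^d$ the properties of strong $\lambda$-monotonicity, $\Lambda$-Lipschitz continuity and vanishing at the origin; consequently $a(x_d,\cdot)$ is invertible with $a^{-1}(x_d,\cdot)$ being $\lambda^{-1}$-Lipschitz and $(\lambda/\Lambda^2)$-strongly monotone. Inverting yields $\partial_{x_d}\varphi(x_d) = a^{-1}(x_d, c - g_d(x_d))$, and the zero-mean condition $\int_0^L\partial_{x_d}\varphi\,dx_d=0$ pins down $c$ via the implicit equation $\int_0^L a^{-1}(x_d, c - g_d(x_d))\,dx_d = 0$. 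The main (mild) obstacle is bounding $c$ independently of $L$: testing this identity against $c$ itself and invoking strong monotonicity of $a^{-1}(x_d,\cdot)$ along with $|a^{-1}(x_d,-g_d(x_d))|\leq \lambda^{-1}\|g\|_{L^\infty(0,L)}$ yields $|c|\leq (\Lambda^2/\lambda^2)\|g\|_{L^\infty(0,L)}$, and substituting back produces the desired estimate $\|\nabla\varphi\|_{L^\infty(\square_L)} \leq c(\lambda,\Lambda)\|g\|_{L^\infty(0,L)}$.
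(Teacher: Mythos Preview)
Your proposal is correct. The paper omits the proof, stating only that it is similar to that of Lemma~\ref{lemma:279}; your existence/uniqueness and one-dimensionality arguments are exactly that adaptation. For the Lipschitz bound you take a slightly different but equally valid route: rather than adapting the dyadic argument on the flux $\Psi=a(x_d,\partial_{x_d}\varphi)+g_d$ from Lemma~\ref{lemma:279}, you exploit that in the periodic case (no massive term) $\Psi$ is \emph{exactly} a constant $c$, invert $a(x_d,\cdot)$, and bound $c$ via the strong monotonicity of $a^{-1}$ together with the zero-mean constraint. This is a clean simplification of what the paper's approach would collapse to here---in Lemma~\ref{lemma:279} the dyadic machinery is needed precisely because $\partial_{x_d}\Psi=\tfrac1T\varphi\neq0$, whereas in your periodic setting that obstruction disappears and direct inversion is the natural endpoint.
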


The proof of Lemma~\ref{lem:425} is similar to the proof of Lemma~\ref{lemma:279} and it is thus omitted.

\begin{remark}\label{rem:layereded}
The properties \eqref{eq:202} imply that the operator $a:\R\times \R^{d}\to \R^d$ given in Lemmas\nobreakspace\ref{lemma:279} and \ref{lem:425} satisfies for all $x_d$ and all $f_1,f_2\in \R^d$
\begin{align*}
(a(x_d,f_1)-a(x_d,f_2))\cdot (f_1-f_2)\geq \lambda |f_1-f_2|^2,\\
|a(x_d,f_1)-a(x_d,f_2)|\leq \Lambda |f_1-f_2|, \quad a(x_d,0)=0.
\end{align*}
If additionally $A(x_d,\cdot)$ is linear and symmetric, then $a$ is also linear and symmetric.
\end{remark}

\subsection{Lipschitz estimates and differentiability of localized correctors}

For $T>0$, $F\in \R^{d \times d}$ and $\omega \in \Omega$, we consider the following localized corrector equation
\begin{equation}\label{eq:372:new}
\frac{1}{T}\varphi^{T}_{F} - \dive DW(\omega(x_d),F + \nabla \varphi^{T}_{F}) = 0 \quad \text{in }\R^d.
\end{equation}

\begin{lemma}\label{lemma:388:new} Let $T>0$ and $\omega \in \Omega$. Suppose Assumption~\ref{ass:W:1} is satisfied. Then there exist $\overline{\delta}=\overline{\delta}(\alpha,p,d)>0$ and $c=c(\alpha,p,d)>0$ such that the following statements hold:
\begin{enumerate}[label=(\roman*)]
\item For all $F \in U_{\overline{\delta}}$, there exists a unique solution $\varphi_{F}^T \in W^{1,\infty}_{\mathrm{uloc}}(\R^d; \R^d)$ to \eqref{eq:372:new} which satisfies 
$$
\|F+\nabla \varphi_{F}^T\|_{L^\infty(\R^d)}<\delta,
$$
where $\delta=\delta(\alpha,d,p)>0$ is as in Lemma~\ref{C:wv}. Moreover, $\varft$ is one-dimensional in the sense that $\varft(x)= \varft(x_d)$ and satisfies
\begin{equation}\label{eq:393:new}
\norm{\nabla \varft}_{L^{\infty}(\R^d)} \leq c \dist(F,\SO d).
\end{equation}

\item The function $U_{\overline{\delta}}\ni F \mapsto \varft \in W_{\rm uloc}^{1,\infty}(\R^d;\R^d)$ is $C^2$ in the sense of Fr{\'e}chet. The first derivative $\varphi_{F,G}^T:=D_F\varft(F)G \in W^{1,\infty}_{\mathrm{uloc}}(\R^d;\R^d)$ at $F \in U_{\overline{\delta}}$ in direction $G\in \R^{d\times d}$ is characterized by 
\begin{equation}\label{eq:516:new}
\frac{1}{T}\varfgt - \dive \brac{D^2W(\omega(x_d),F+ \nabla \varft)\brac{G+ \nabla \varfgt}}= 0 \quad \text{in }\R^d.
\end{equation}
Moreover, it is one-dimensional in the sense that $\varfgt(x)= \varfgt(x_d)$  and satisfies
\begin{equation}\label{eq:520:new}
\norm{\nabla \varfgt}_{L^{\infty}(\R^d)} \leq c|G| \quad \text{for all }G \in \R^{d \times d}.
\end{equation}

\item The second derivative $\varphi^T_{F,G,H} := D^2_F \varft HG$ at $F\in U_{\overline \delta}$ with $G,H\in\R^{d\times d}$ is characterized by
\begin{eqnarray}\label{eq:611:new}
& & \frac{1}{T}\varfght - \dive\brac{D^2W(\omega(x_d),F+ \nabla \varft)\nabla \varfght} \nonumber \\ & = & \dive \brac{D^3W(\omega(x_d),F + \nabla \varft)\brac{H+ \nabla \varphi_{F,H}^T }\brac{G+ \nabla \varfgt}} \quad \text{in }\R^d. 
\end{eqnarray}
Moreover, it is one-dimensional in the sense that $\varfght(x)= \varfght(x_d)$  and satisfies
\begin{equation}\label{eq:615:new}
\norm{\nabla \varfght}_{L^{\infty}(\R^d)} \leq c |H||G| \quad \text{for all }G,H \in \R^{d \times d}.
\end{equation}
\end{enumerate}
\end{lemma}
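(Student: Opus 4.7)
The overall strategy is to reduce the nonlinear equation \eqref{eq:372:new} to a strongly monotone ODE by combining three ingredients: the matching convex lower bound of Lemma~\ref{C:wv}, the one-dimensional structure of laminate correctors (Lemma~\ref{lemma:279}), and a rotation trick based on frame-indifference (W2). Namely, for $F \in U_{\overline\delta}$ we pick $R \in \SO d$ with $|F-R| = \dist(F,\SO d)$ and write $F = R(\Id+G)$ with $|G| = \dist(F,\SO d)$. If $\varphi$ solves \eqref{eq:372:new} at $F$, then $\tilde\varphi := R^T\varphi$ solves the same equation with $F$ replaced by $\Id+G$, so throughout we may assume $F = \Id + G$ with $|G| = \dist(F,\SO d)$.

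For part~(i), we first solve the auxiliary strongly monotone equation
\begin{equation*}
\frac{1}{T}\psi - \dive DV(\omega(x_d),F+\nabla\psi) = 0 \quad\text{in $\R^d$},
\end{equation*}
with $V$ from Lemma~\ref{C:wv}. Rewriting as
\begin{equation*}
\frac{1}{T}\psi - \dive A(x_d,\nabla\psi) = \dive g(x_d),\quad A(x_d,\cdot) := DV(\omega(x_d),F+\cdot) - DV(\omega(x_d),F),
\end{equation*}
we have $A(x_d,\cdot)\in\mathcal A_\beta^{1/\beta}$, and Lemmas~\ref{lemma:222}, \ref{lemma:279} produce a unique $\psi \in W^{1,\infty}_{\rm uloc}(\R^d;\R^d)$ depending only on $x_d$. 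For the sharp bound \eqref{eq:393:new} we exploit the reduced regime: since $\mu D\det(\Id+G)$ is constant in $x_d$ (hence divergence-free), we may replace $g$ by $DW(\omega(x_d),\Id+G)$ without altering the equation, and $DW(\omega,\Id)=0$ combined with (W4) gives $\|DW(\omega(\cdot),\Id+G)\|_{L^\infty}\leq c|G|$. Lemma~\ref{lemma:279}~(ii) then yields $\|\nabla\psi\|_{L^\infty}\leq c\dist(F,\SO d)$, and for $\overline\delta$ small enough this forces $F+\nabla\psi \in U_\delta$ a.e.; by \eqref{eq:cor:613:1} in Corollary~\ref{cor:613}, $\psi$ also solves \eqref{eq:372:new}. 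Uniqueness in the class $\{\|F+\nabla\varphi\|_{L^\infty}<\delta\}$ transfers from the monotone auxiliary problem via the same matching identity.

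For parts~(ii) and (iii) we verify Fr\'echet $C^2$-dependence of $F\mapsto \varft$ directly. Set $\mathbb{L}(x_d) := D^2W(\omega(x_d),F+\nabla\varft)$, which by Corollary~\ref{cor:613}~(ii) is bounded and Legendre--Hadamard elliptic; the linear ODE \eqref{eq:516:new} then falls into the framework of Lemma~\ref{lemma:279} and Remark~\ref{rem:layereded}, so $\varfgt$ exists uniquely in $W^{1,\infty}_{\rm uloc}$ and, by recasting \eqref{eq:516:new} as $\frac{1}{T}\varfgt - \dive(\mathbb{L}\nabla\varfgt) = \dive(\mathbb{L} G)$ with $\|\mathbb{L} G\|_{L^\infty}\leq c|G|$, satisfies \eqref{eq:520:new}. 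To identify $\varfgt$ with the directional derivative of $\varft$ we consider the remainder $\theta_t := t^{-1}(\varphi^T_{F+tG}-\varft)-\varfgt$: subtracting the corrector equations at $F+tG$ and $F$, Taylor-expanding $DW$ around $F+\nabla\varft$ (this is legal in the matching region by Corollary~\ref{cor:613}), and applying Lemma~\ref{lemma:279} to the resulting linear ODE for $\theta_t$, whose source is $o(1)$ in $L^\infty$, yields $\|\nabla\theta_t\|_{L^\infty}\to 0$. The same recipe applied to \eqref{eq:516:new} produces $\varfght$ characterized by \eqref{eq:611:new} together with \eqref{eq:615:new}: the right-hand side of \eqref{eq:611:new} is $L^\infty$-bounded by $c|H||G|$ using \eqref{eq:393:new}, \eqref{eq:520:new}, its analogue for $H$, and the $C^3$-bound \eqref{ass:wd2lip}, and Lemma~\ref{lemma:279} closes the estimate.

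The main obstacle is the sharp Lipschitz estimate \eqref{eq:393:new} in~(i). A direct application of Lemma~\ref{lemma:279} only produces $\|\nabla\psi\|_{L^\infty}\leq c\|DV(\omega(\cdot),F)\|_{L^\infty}$, which stays of order one even as $F\to\SO d$ because $DV$ absorbs the Piola contribution $\mu D\det$ that does not vanish on $\SO d$. The frame-indifference reduction $F\rightsquigarrow \Id+G$, combined with the observation that $\mu D\det(\Id+G)$ is constant in $x_d$ and hence divergence-free, is what forces the effective source to scale like $\dist(F,\SO d)$; this scaling is essential to keep $F+\nabla\varft$ inside the matching region $U_\delta$ so that Lemma~\ref{C:wv} and Corollary~\ref{cor:613} apply consistently and the reduction to the convex problem actually closes.
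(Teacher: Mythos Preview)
Your approach for part~(i) is valid and close to the paper's, but the frame-indifference reduction is an unnecessary detour. The paper simply takes $g(x_d):=DV(\omega(x_d),F)-DV(\omega(x_d),R)$ for an arbitrary $R\in\SO d$; since $DW(\omega,R)=0$ forces $DV(\omega,R)=\mu D\det(R)$, which is a constant matrix, this $g$ has the same divergence as $DV(\omega(x_d),F)$ and satisfies $\|g\|_{L^\infty}\leq\tfrac1\beta|F-R|$ by the bound $|D^2V|\leq\tfrac1\beta$ in Definition~\ref{def:vbeta}. Minimizing over $R$ gives \eqref{eq:393:new} directly, without first rotating $F$ to $\Id+G$.

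For parts~(ii) and~(iii) there is a genuine gap. You set $\mathbb L(x_d)=D^2W(\omega(x_d),F+\nabla\varft)$ and invoke Lemma~\ref{lemma:279}, but that lemma (and Lemma~\ref{lemma:222} behind it) requires $A(x_d,\cdot)\in\mathcal A_\lambda^\Lambda$, i.e.\ the pointwise bound $\mathbb L G\cdot G\geq\lambda|G|^2$ for \emph{all} $G\in\R^{d\times d}$. This fails for $D^2W$: since $W$ vanishes on all of $\SO d$, the Hessian $D^2W(\omega,\Id)$ annihilates skew-symmetric matrices. Corollary~\ref{cor:613}~(ii) only yields the integral G\aa rding inequality, and even pointwise Legendre--Hadamard ellipticity is not what Lemma~\ref{lemma:279} assumes; in particular the exponentially weighted estimate of Lemma~\ref{lemma:222}~(ii), which underlies the one-dimensionality and uniqueness in $H^1_{\rm uloc}(\R^d)$, uses the full pointwise lower bound. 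The paper avoids this by running the entire differentiability argument with $D^2V$ and $D^3V$ in place of $D^2W$ and $D^3W$ (see \eqref{eq:1068:2} and \eqref{eq:1127:2}): since $V\in\mathcal V_\beta$, the tensor $D^2V(\omega(x_d),F+\nabla\varft)$ is pointwise positive definite, so Lemmas~\ref{lemma:222} and~\ref{lemma:279} apply verbatim. Only at the end does the paper convert the characterizing equations back to the $W$-formulation via \eqref{eq:cor:613:2} and \eqref{eq:cor:613:3}. Your argument can be repaired either along these lines, or by noting that the reduced one-dimensional matrix $a(x_d)f=[\mathbb L(x_d)(f\otimes e_d)]e_d$ \emph{is} positive definite (because $D^2W=D^2V-\mu D^2\det$ in $U_\delta$ and $D^2\det$ vanishes on rank-one matrices) and then carrying out the existence, uniqueness and Lipschitz estimates directly at the ODE level; but as written the citation of Lemma~\ref{lemma:279} for $\mathbb L=D^2W$ is not justified.
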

The proof is presented in Section\nobreakspace\ref{det:proofs}.

For the proof of the systematic error estimates of Theorem\nbs\ref{Theorem:2}, we need the following result. In the following Lemma, we denote by $\varft,\, \varfgt,\, \varfght$ the correctors from Lemma\nobreakspace\ref{lemma:388:new} and by $\hatvarft,\, \hatvarfgt,\, \hatvarfght$ the same objects if we replace $\omega\in \Omega$ by another realization $\widehat{\omega}\in \Omega$. 
\begin{lemma}\label{L:esttruncatedcorrector}
Suppose that Assumption~\ref{ass:W:1} is satisfied and that it holds $W(\omega,\cdot)\in C^4(\overline{U_{\alpha}})$ with 
\begin{equation*}
\norm{D^4W(\omega,\cdot)}_{C(\overline{U_{\alpha}})} \leq \frac1\alpha \quad \text{for all }\omega \in \R.  
\end{equation*}
Let $T,L>0$, $\omega, \widehat{\omega} \in \Omega$ with $\omega(x_d) = \widehat{\omega}(x_d)$ for $x_d \in [-\frac{L}{4},\frac{L}{4}]$. Then there exists $c=c(\alpha,p,d)\in[1,\infty)$ such that for all $F \in U_{\overline{\delta}}$ with  $\overline{\delta}=\overline{\delta}(\alpha,p,d)>0$ as in Lemma~\ref{lemma:388:new}, we have
\begin{align}
 \dashint_{B_{\sqrt{T}}} |\nabla \hatvarft - \nabla \varft|^2 dx_d \leq& c \dist^2(F,\SO d) \exp\brac{-\frac{1}{c} \frac{L}{\sqrt{T}}}, \label{eq:1122} \\
 \dashint_{B_{\sqrt{T}}}  |\nabla \hatvarfgt - \nabla \varfgt|^2 dx_d \leq& c |G|^2 \exp\brac{-\frac{1}{c}\frac{L}{\sqrt{T}}}, \label{eq:1123}\\
\dashint_{B_{\sqrt{T}}} |\nabla \hatvarfght - \nabla \varfght|^2 dx_d  \leq & c|G|^2 |H|^2 \exp\brac{-\frac{1}{c}\frac{L}{\sqrt{T}}}. \label{eq:1124}
\end{align}
\end{lemma}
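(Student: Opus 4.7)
My approach is to reduce the nonlinear corrector equations to monotone systems via the matching convex potential $V$ of Lemma~\ref{C:wv}, derive an equation for the difference of the two correctors whose source is supported outside the coincidence set $\{|x_d|\le L/4\}$, and then apply the weighted energy estimate of Lemma~\ref{lemma:222}(ii) with the exponential weight $\eta(x):=\exp(-\gamma|x|/\sqrt{T})$ to transport decay from $\{|x_d|>L/4\}$ into $B_{\sqrt{T}}$.

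For \eqref{eq:1122}, set $\psi:=\hatvarft-\varft$. By Lemma~\ref{lemma:388:new}(i), both $F+\nabla\varft$ and $F+\nabla\hatvarft$ take values in $U_\delta$, so by \eqref{eq:cor:613:1} we may replace $DW$ by $DV$ in both corrector equations. Subtracting and factoring the monotone term yields
\begin{equation*}
\tfrac1T\psi - \dive(\mathbb L\nabla\psi) = \dive g, \qquad g(x) := DV(\widehat\omega(x_d), F+\nabla\hatvarft) - DV(\omega(x_d), F+\nabla\hatvarft),
\end{equation*}
with $\mathbb L(x):=\int_0^1 D^2V(\omega(x_d), F+\nabla\varft + s\nabla\psi)\,ds$ uniformly elliptic and bounded by Corollary~\ref{cor:613}(ii). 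By construction $g\equiv 0$ on $\{|x_d|\le L/4\}$. The essential observation needed to obtain the prefactor $\dist^2(F,\SO d)$ is that $DV(\omega,R)=\mu R$ for every $R\in\SO d$ is independent of $\omega$---a consequence of the matching identity \eqref{W=V} and of $DW$ vanishing on $\SO d$; combined with $\|D^2V\|_\infty\le 1/\beta$ and \eqref{eq:393:new} this produces the pointwise bound $|g|\le c\,\dist(F,\SO d)$. Lemma~\ref{lemma:222}(ii) with $R=\sqrt{T}$, $f=0$, followed by integrating out the tangential directions (the weight $\eta$ reduces to $\widetilde\eta(x_d)\sim T^{(d-1)/2}\exp(-\gamma|x_d|/(c\sqrt{T}))$) then yields
\begin{equation*}
\int_\R |\partial_{x_d}\psi|^2 e^{-\gamma|x_d|/(c\sqrt{T})}\,dx_d \leq c\,\dist^2(F,\SO d)\sqrt{T}\,\exp(-L/(c\sqrt{T})),
\end{equation*}
and restricting to $|x_d|\le\sqrt{T}$ where the weight is bounded below by a constant proves \eqref{eq:1122}. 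The displayed weighted bound is retained for later use.

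The proofs of \eqref{eq:1123} and \eqref{eq:1124} follow the same template applied to the linearized equations \eqref{eq:516:new} and \eqref{eq:611:new}. For part (ii), $\psi:=\hatvarfgt-\varfgt$ satisfies $\tfrac1T\psi - \dive(\widehat{\mathbb L}\nabla\psi)=\dive g$ with $\widehat{\mathbb L}(x):=D^2V(\widehat\omega(x_d),F+\nabla\hatvarft)$ and
\begin{equation*}
g(x) = \bigl(D^2V(\widehat\omega(x_d),F+\nabla\hatvarft) - D^2V(\omega(x_d),F+\nabla\varft)\bigr)(G+\nabla\varfgt).
\end{equation*}
Outside $\{|x_d|\le L/4\}$ I bound $|g|\le c|G|$ via $\|D^2V\|_\infty$ and \eqref{eq:520:new}; inside this set the coincidence $\omega=\widehat\omega$ collapses $g$ to a $D^2V$-difference in the $F$-variable only, which is controlled pointwise by $c|G||\nabla\hatvarft-\nabla\varft|$ using $\|D^3V\|_\infty\le c$. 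Feeding the interior contribution into $\int g^2\widetilde\eta$ with the weighted bound from part (i) gives $\int g^2\widetilde\eta \leq c|G|^2 T^{d/2}\exp(-L/(c\sqrt{T}))$, and the same averaging argument as in part (i) produces \eqref{eq:1123}. Estimate \eqref{eq:1124} follows analogously from \eqref{eq:611:new}; its source decomposes into an outer piece of size $c|G||H|$ (where the regularity of the coefficient in $\omega$ is not needed, only $\|D^3V\|_\infty$ and $\|D^4V\|_\infty$, the latter justified by the extra $D^4W$ hypothesis of the lemma) and an inner piece controlled by plugging the weighted estimates from parts (i) and (ii) into $\int g^2\widetilde\eta$.

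The main delicate point is the bookkeeping in parts (ii) and (iii): one needs the \emph{weighted} (not merely averaged) bound on $\nabla\hatvarft-\nabla\varft$, and in part (iii) also on $\nabla\hatvarfgt-\nabla\varfgt$, in order to absorb the interior contribution of the source term into the weighted energy at the next bootstrap level, while keeping the constants independent of $T$ and $L$ so that the exponential rate $1/\sqrt{T}$ survives. All required ingredients---Lemma~\ref{C:wv} for the reduction to $V$, Lemma~\ref{lemma:388:new} for the Lipschitz bounds on the (linearized) correctors, and Lemma~\ref{lemma:222}(ii) for the weighted energy estimate---are already in place.
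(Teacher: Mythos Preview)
Your proposal is correct and follows essentially the same route as the paper: reduce to the convex potential $V$, write the equation for the difference with a source supported (up to lower-order terms) outside the coincidence set, and apply the exponentially weighted energy estimate of Lemma~\ref{lemma:222}(ii), bootstrapping the weighted bounds from level to level. Two cosmetic differences: the paper writes the Step~1 equation in nonlinear monotone form $A(x_d,G):=DV(\widehat\omega,F+\nabla\varft+G)-DV(\widehat\omega,F+\nabla\varft)$ rather than via your integrated Hessian $\mathbb L$, and it works directly in one dimension (all correctors here are functions of $x_d$ only by Lemma~\ref{lemma:279}) so there is no need to integrate out tangential variables. Also, the ellipticity of your $\mathbb L$ follows directly from $V\in\mathcal V_\beta$ (Definition~\ref{def:vbeta}), not from Corollary~\ref{cor:613}(ii), which concerns $D^2W$ and would in any case require the interpolated argument to stay in $U_\delta$.
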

The proof is presented in Section\nobreakspace\ref{det:proofs}.

\subsection{Proofs of deterministic intermediate results}\label{det:proofs}

\begin{proof}[Proof of Corollary \ref{cor:613}]
\step 1 Proof of part (i).

Equation \eqref{eq:cor:613:1} is a direct consequence of \eqref{eq:615:3}, \eqref{W=V} and the fact that $\dive D\det(F+ \nabla\varphi)=0$ in the sense of distributions since $\det$ is a Null-Lagrangian.

Next we show \eqref{eq:cor:613:2}. We suppose $u\in C^\infty(\R^d;\R^d)$, the general claim follows by approximation. For every $\eta\in C_c^\infty(\R^d;\R^d)$ there exists $h_0>0$ such that  $\|\dist(F+\nabla \varphi+h\nabla u,\SO d)\|_{L^\infty({\rm supp}\eta)}<\delta$ $\forall h\in[0,h_0]$ and thus
\begin{align*}
&\frac1h\int_{\R^d}(DW(\omega(x_d),F+\nabla \varphi+h\nabla u)-DW(\omega(x_d),F+\nabla \varphi))\cdot\nabla \eta\,dx\\
=&\frac1h\int_{\R^d}(DV(\omega(x_d),F+\nabla \varphi+h\nabla u)-DV(\omega(x_d),F+\nabla \varphi))\cdot\nabla \eta\,dx\qquad \forall h\in(0,h_0].
\end{align*}
Letting $h\to0$, we obtain
$
 \int_{\R^d}D^2W(\omega(x_d),F+\nabla \varphi)\nabla u\cdot\nabla \eta\,dx=\int_{\R^d}D^2V(\omega(x_d),F+\nabla \varphi)\nabla u\cdot\nabla \eta\,dx
$
and \eqref{eq:cor:613:2} follows from the arbitrariness of $\eta\in C_c^\infty(\R^d;\R^d)$. The argument for \eqref{eq:cor:613:3} is analogous and left to the reader.

\step 2 By \eqref{eq:615:3} and \eqref{W=V} we have
\begin{align*}
\int_{\R^d}\mathbb{L}(x)\nabla \eta \cdot \nabla \eta dx & = \int_{\R^d} D^2V(\omega(x_d),F+ \nabla \varphi)\nabla \eta \cdot \nabla \eta - \mu D^2\det(F+ \nabla \varphi)\nabla \eta \cdot \nabla \eta dx \\
& \geq \beta \int_{\R^d}|\nabla \eta|^2 dx,
\end{align*}
where the inequality follows by strong convexity of $V(\omega(x_d),\cdot)$ and $\int_{\R^d}D^2\det(F+\nabla \varphi)\nabla \eta \cdot \nabla \eta = 0$,
which follows from the fact that $\det$ is a Null-Lagrangian. Estimate $|\mathbb{L}(\cdot)|\leq c$ is a consequence of $W\in C^2(\overline{U_{\delta}})$.
\end{proof}

\begin{proof}[Proof of Lemma~\ref{lemma:279}]
\step 1 Proof of part (i). For any $h \in \R$ and $i\in \cb{1,..., d-1}$, consider $w:=\varphi(\cdot+he_i)-\varphi$. Obviously, we have $w\in H_{\rm uloc}^1(\R^d;\R^m)$ and 
\begin{equation*}
\frac{1}{T} w(x) - \dive\brac{A(x_d,\nabla \varphi(x) +\nabla w(x))-A(x_d,\nabla \varphi(x))} = 0 \quad \text{in }\R^d.
\end{equation*}
Hence, $w\equiv0$ and thus $\varphi$ depends only on $x_d$ and satisfies \eqref{eq:616}.

\step 2 Proof of part (ii).

Without loss of generality we suppose $\lambda=1$, the general case follows by replacing $a$, $T$ and $g$ by $a/\lambda$, $T\lambda$ and $g/\lambda$. Assumption \eqref{eq:202} and Remark~\ref{rem:layereded} imply that for any ball $B\subset \R$ we have
\begin{align}\label{eq:672}
\begin{split}
\dashint_{B} |\partial_{x_d} \varphi|^2\, dx_d & \leq  \dashint_{B} |a(x_d,\partial_{x_d}\varphi)|^2\, dx_d \\ & \leq 2 \brac{\dashint_{B}|a(x_d,\partial_{x_d}\varphi)+g_d|^2 dx_d + \dashint_{B}|g_d|^2 dx_d} \\ & \leq 2\brac{\dashint_{B}|\Psi(x_d)|^2 dx_d + \norm{g}_{L^{\infty}(\R)}^2},
\end{split}
\end{align}
where we use the shorthand $\Psi(x_d):=a(x_d,\partial_{x_d} \varphi(x_d)\otimes e_d)+ g_d(x_d)$.

We prove the following: There exists $c=c(\Lambda)\in[1,\infty)$ such that for every $x_0\in \R$ it holds

\begin{equation}\label{eq:lemmalip:keyest}
\sup_{r\in (0,1)} \dashint_{r \widetilde{B}}|\Psi(x_d)|^2dx_{d} \leq c \norm{g}_{L^{\infty}(\R)}^2\quad\mbox{where}\quad r\widetilde{B}:=rB_{\sqrt{T}}(x_0) \subset \R.
\end{equation}
Clearly, \eqref{eq:lemmalip:keyest} together with \eqref{eq:672} and the arbitrariness of $x_0$ imply the claimed estimate \eqref{lemma:279:est}.

In the following we use the notation $(u)_{B}:= \dashint_{B}u(x_d) dx_d$

We have
\begin{equation*}
\dashint_{r \widetilde{B}}|\Psi(x_d)|^2dx_{d} \leq 2 \dashint_{r \widetilde{B}}|\Psi(x_d)-(\Psi)_{r\widetilde{B}}|^2dx_{d}+ 2 |(\Psi)_{r\widetilde{B}}|^2,
\end{equation*}
where we use the shorthand $(u)_{B}:= \dashint_{B}u(x_d) dx_d$. With help of the (one-dimensional) Poincar{\'e} inequality, equation \eqref{eq:616}, and \eqref{eq:378}, we estimate the first term on the right-hand side 
\begin{eqnarray}\label{est:psidiff}
\dashint_{r \widetilde{B}}|\Psi(x_d)-(\Psi)_{r\widetilde{B}}|^2dx_{d} & \leq &  |r\widetilde{B}|^2 \dashint_{r\widetilde{B}}|\partial_{d}\Psi(x_d)|^2 dx_d 
 \stackrel{\eqref{eq:616}}{\leq}  |r\widetilde{B}| \int_{r\widetilde B} |\frac{1}{T} \varphi|^2 dx_d \notag \\ 
 & = &  r \dashint_{B_{\sqrt{T}}(x_0)} \frac{1}{T} |\varphi|^2 dx_d
  \stackrel{\eqref{eq:378}}{\leq}  cr \norm{g}_{L^{\infty}(\R)}^2.
\end{eqnarray}
To estimate $|(\Psi)_{r\widetilde{B}}|$, we use \eqref{est:psidiff} and a standard dyadic argument. Firstly, we observe that for $r\in[\frac14,1]$, there exists $c=c(\Lambda)\in[1,\infty)$ such that 
\begin{equation}\label{eq:696}
|(\Psi)_{r\widetilde{B}}| \leq 2 \brac{\dashint_{B_{\sqrt{T}}}|\Psi|^2 dx_d}^{\frac{1}{2}} \leq 2 \brac{\Lambda\brac{\dashint_{B_{\sqrt{T}}}|\partial_{d} \varphi|^2 dx_d}^{\frac{1}{2}}+\norm{g}_{L^{\infty}(\R)}} \stackrel{ \eqref{eq:378}}{\leq} c \norm{g}_{L^{\infty}(\R)}.
\end{equation}
For $r\in (0,\frac{1}{4})$, we choose $q \in (\frac{1}{4},\frac{1}{2})$ and $k\in \N$ satisfying $r= q^k$. We have 
\begin{align*}
|(\Psi)_{r\widetilde{B}}| & \leq  |\sum_{i=0}^{k-1} (\Psi)_{q^{i+1}\widetilde{B}}- (\Psi)_{q^{i}\widetilde{B}}| +  |(\Psi)_{\widetilde{B}}|
\stackrel{\eqref{eq:696}}{\leq}  \sum_{i=0}^{k-1}|(\Psi)_{q^{i+1}\widetilde{B}}- (\Psi)_{q^{i}\widetilde{B}}|+ c(\tfrac\Lambda\lambda)\norm{g}_{L^{\infty}(\R)}.
\end{align*}
Using
\begin{align*}
|(\Psi)_{q^{i+1}\widetilde{B}}- (\Psi)_{q^{i}\widetilde{B}}| & \leq \dashint_{q^{i+1}\widetilde{B}}|\Psi - (\Psi)_{q^i\widetilde{B}}| dx_d\\
  &\leq \frac1q\brac{\dashint_{q^{i}\widetilde{B}}|\Psi-(\Psi)_{q^i\widetilde{B}}|^2 dx_d}^{\frac{1}{2}} \stackrel{ \eqref{est:psidiff}}{\leq} c(\sqrt{q})^{i-2} \norm{g}_{L^{\infty}(\R)}
\end{align*}
we obtain
\begin{align*}
\sum_{i=0}^{k-1}|(\Psi)_{q^{i+1}\widetilde{B}}- (\Psi)_{q^{i}\widetilde{B}}| \leq \frac{c}{q} \norm{g}_{L^{\infty}(\R)} \sum_{i=0}^{k-1}(\sqrt{q})^i & \leq \frac{c}{q(1-\sqrt q)} \norm{g}_{L^{\infty}(\R)}
\end{align*}
which concludes the proof.

\end{proof}

\begin{proof}[Proof of Lemma~\ref{lemma:388:new}]

Throughout the proof we write $\lesssim$ if $\leq$ holds up to a multiplicative constant depending only on $\alpha,d$ and $p$.

\step 1 Proof of part (i).

First, we consider the convex lower bound  $V$ for $W$ from Lemma\nobreakspace\ref{C:wv} and the equation
\begin{equation}\label{eq:795:2}
\frac{1}{T}\varphi^{T}_{F} - \dive DV(\omega(x_d),F + \nabla \varphi^{T}_{F}) = 0 \quad \text{in }\R^d.
\end{equation}
Since $DW(\omega(x_d),R) = 0$ and thus $DV(\omega(x_d),R)= \mu D\det(R)$ for any $R\in \SO d$, we may rewrite this equation as
\begin{equation*}
\frac{1}{T}\varft -\dive A(x_d,\nabla \varft) = \dive g  \quad \text{in }\R^d,
\end{equation*}
with
\begin{equation*}
A(x_d,G):=DV(\omega(x_d),F+G)-DV(\omega(x_d),F)\quad\mbox{and}\quad g(x_d):=DV(\omega(x_d),F)-DV(\omega(x_d),R).
\end{equation*}
Note that $A$ satisfies the assumptions of Lemma \ref{lemma:279} and it holds $\|g\|_{L^\infty(\R)} \lesssim |F-R|$.  
Hence, Lemma~\ref{lemma:222} and Lemma~\ref{lemma:279} imply that there exists a unique $\varphi_F^T\in H_{\rm uloc}^1(\R^d;\R^d)$ satisfying \eqref{eq:795:2} and which is one-dimensional and satisfies
$\norm{\nabla\varft}_{L^{\infty}(\R)} \lesssim  |F-R|$. Minimizing over $R\in \SO d$ we obtain $\norm{\nabla\varft}_{L^{\infty}(\R)} \lesssim \dist(F,\SO d)$. Hence, we find $\overline \delta=\overline\delta(\alpha,d,p)>0$ such that for all $F\in U_{\overline \delta}$, it holds
\begin{equation}\label{eq:fpvarftinudelta}
\|\dist(F+ \nabla \varft,\SO d)\|_{L^\infty(\R^d)} <\delta,
\end{equation}
with $\delta>0$ as in Lemma~\ref{C:wv}. Combining, \eqref{eq:fpvarftinudelta} with Corollary\nbs\ref{cor:613}\nbs(i), we obtain that for all $F\in \SO d$, $\varft$ solves \eqref{eq:372:new}. On the other hand, any solution of \eqref{eq:372:new} satisfying \eqref{eq:fpvarftinudelta} satisfies \eqref{eq:795:2}.  However, the latter equation admits a unique solution. This implies the claimed uniqueness.

\step 2 Proof of part~(ii) except $C^2$-regularity of $F\mapsto \varphi_F^T$.

\substep{2.1} Lipschitz estimate on $F\mapsto \varphi_F$. For $F,F'\in\R^{d\times d}$ let $\varphi_F^T$ and $\varphi_{F'}^T$ be defined via \eqref{eq:795:2}. We claim
\begin{equation}\label{eq:691new}
\|\nabla (\varphi_F^T-\varphi_{F'}^T)\|_{L^\infty(\R^d)}\lesssim |F-F'|.
\end{equation}
Indeed, $\Phi := \varphi_{F}^T - \varphi_{F'}^T$ solves
\begin{eqnarray*}
& &\frac{1}{T}\Phi -\dive \brac{DV(\omega(x_d),F + \nabla \Phi + \nabla \varphi_{F'}^T)-DV(\omega(x_d),F + \nabla  \varphi_{F'}^T)} \\
 & = & \dive \brac{DV(\omega(x_d), F+ \nabla \varphi_{F'}^T)-DV(\omega(x_d),F'+ \nabla \varphi_{F'}^T)} \quad \text{in }\R^d.
\end{eqnarray*}
Lemma \ref{lemma:279} together with the Lipschitz continuity of $DV(\omega(x_d),\cdot)$ imply \eqref{eq:691new}.

\substep{2.2} Differentiability of $F\mapsto \varphi_F$. 

Lemma~\ref{lemma:279} and the one-dimensionality of $\varft$  imply that
\begin{equation}\label{eq:1068:2}
\frac{1}{T}\varfgt - \dive \brac{D^2V(\omega(x_d),F+ \nabla \varft)\brac{G+ \nabla \varfgt}}= 0 \quad \text{in }\R^d,
\end{equation}
admits a unique solution $\varfgt\in H_{\rm uloc}^1(\R^d;\R^d)$ which depends only on $x_d$ and satisfies \eqref{eq:520:new}. Combining \eqref{eq:795:2} and \eqref{eq:1068:2}, we obtain that $\Psi:= \varphi_{F+G}^T-\varft - \varfgt$ solves 
\begin{equation}\label{eq:538}
\frac{1}{T} \Psi - \dive\brac{D^2V(\omega(x_d),F+ \nabla \varft)\nabla \Psi}=\dive g \quad \text{in }\R^d,
\end{equation}
where
\begin{align*}
g:=& DV(\omega(x_d),F+G+\nabla \varphi_{F+G}^T)-DV(\omega(x_d),F+ \nabla \varft)\\
&-D^2V(\omega(x_d),F+ \nabla \varft)(G+ \nabla \varphi_{F+G}^T - \nabla \varft).
\end{align*}
For $|G|$ small enough, we have $F+G \in U_{\overline{\delta}}$ and thus $F+G + \nabla\varphi_{F+G}^T \in U_{\delta}$ a.e. Since the smoothness of $W$ and \eqref{W=V} imply $\sup_{F\in U_\delta}\|D^3V(\omega(x_d),F)\|\lesssim1$, we obtain by a Taylor expansion  
\begin{equation}\label{eq:691}
\norm{g}_{L^{\infty}(\R^d)}\lesssim \|G+ \nabla \varphi_{F+G}^T - \nabla \varft\|_{L^\infty(\R^d)}^2\stackrel{\eqref{eq:691new}}{\lesssim} |G|^2.
\end{equation}
Hence, Lemma~\ref{lemma:222} and Lemma~\ref{lemma:279} applied to \eqref{eq:538} yield
\begin{equation}\label{eq:705}
\|\nabla \Psi\|_{L^\infty(\R^d)}^2+\dashint_{B_{\sqrt{T}}(x_0)}\frac{1}{T} |\Psi|^2 + |\nabla \Psi|^2 dx \lesssim |G|^4\qquad\forall x_0\in\R^d
\end{equation}
and thus, by Poincar\'e inequality,
$
\lim_{|G|\to0}|G|^{-1}\sup_{x_0\in\R^d}\|\varphi_{F+G}^T-\varft - \varfgt\|_{W^{1,\infty}(B_1(x_0))}=0.
$
Hence, $F \mapsto \varft$ is differentiable with derivative $\varfgt=D_F\varphi_F^TG$. 

\substep{2.3} Conclusion. Let $F\in U_{\overline \delta}$ and $G\in\R^{d\times d}$ be given. Combining Corollary~\ref{cor:613} and Step~1, we observe that equation \eqref{eq:516:new} admits a unique solution in $H_{\rm uloc}^1(\R^d;\R^d)$, which coincides with the solution of \eqref{eq:1068:2}. Hence, claim (ii) (except $C^2$-regularity) is proven.

\step 3 Proof of $C^2$-regularity of $F\mapsto \varphi_F^T$ and part (iii).

Fix $F\in U_{\overline \delta}$. For every $G,H\in\R^{d\times d}$, Lemma~\ref{lemma:279} implies that 
\begin{eqnarray}\label{eq:1127:2}
& & \frac{1}{T}\varfght - \dive\brac{D^2V(\omega(x_d),F+ \nabla \varft)\nabla \varfght} \nonumber \\ & = & \dive \brac{D^3V(\omega(x_d),F + \nabla \varft)\brac{H+ \nabla \varphi_{F,H}^T }\brac{G+ \nabla \varfgt}} \quad \text{in }\R^d 
\end{eqnarray}
admits a unique solution $\varfght$ which depends only on $x_d$ and satisfies \eqref{eq:615:new}. Set $\overline{\Psi}:= \varphi_{F+H,G}^T-\varfgt -\varfght$. In view of \eqref{eq:1127:2} and \eqref{eq:1068:2}, we have
\begin{equation*}
\frac{1}{T}\overline{\Psi} - \dive \brac{D^{2}V(\omega(x_d),F+\nabla\varft)\nabla \overline{\Psi}} = \dive \brac{g_A + g_B + g_C} \quad \text{in }\R^d,
\end{equation*}
where
\begin{align*}
g_A = & \brac{D^2V(\omega(x_d),F+H+ \nabla \varphi_{F+H}^T)-D^2V(\omega(x_d),F+ \nabla\varft)}(G+ \nabla \varphi_{F+H,G}^T)\\ & -D^3V(\omega(x_d),F+\nabla \varft)(H+\nabla \varphi_{F+H}^T-\nabla \varft)(G+ \nabla \varphi_{F+H,G}^T),\\
g_B = &  - D^3V(\omega(x_d),F+\nabla \varft)(H+ \nabla\varphi_{F,H}^T)(\nabla \varfgt - \nabla \varphi_{F+H,G}^T), \\
g_C = & - D^3V(\omega(x_d),F+ \nabla \varft)(\nabla\varphi_{F,H}^T-\nabla \varphi_{F+H}^T+ \nabla \varft)(G+ \nabla \varphi_{F+H,G}^T).
\end{align*}
Obviously, the functions $g_A,\; g_B,\; g_C$ depend only on $x_d$ and we claim that
\begin{equation}\label{est:gagbgc}
\norm{g_A + g_B + g_C}_{L^{\infty}(\R^d)} \lesssim |G||H|^2.
\end{equation}
Before proving \eqref{est:gagbgc}, we note that \eqref{est:gagbgc} implies the claim. Indeed, Lemma \ref{lemma:222}, Lemma~\ref{lemma:279}  and \eqref{est:gagbgc} yield 
\begin{equation}\label{eq:123456}
\|\overline\Psi\|_{L^\infty(\R^d)}^2+\dashint_{B_{\sqrt{T}}(x_0)}\frac{1}{T} |\overline{\Psi}|^2 + |\nabla \overline{\Psi}|^2 dx\lesssim |G|^2 |H|^4.
\end{equation}
Taking the supremum over $|G|\leq 1$ and letting $H\to 0$, the  differentiability of $F\mapsto D_F\varphi_{F}^T(\cdot)$ follows. Finally, in view of Corollary~\ref{cor:613} the function $\varfght$ is the unique solution of \eqref{eq:611:new}.

\medskip

Hence it is left to prove \eqref{est:gagbgc}. For $|H|$ sufficiently small, the smoothness of $W$ and a Taylor expansion imply
\begin{equation*}
\norm{g_A}_{L^{\infty}(\R^d)} \lesssim |G|\norm{H+\nabla \varphi_{F+H}^T- \nabla \varphi_{F}^T}^2_{L^{\infty}(\R^d)} \stackrel{\eqref{eq:691new}}{\lesssim}   |G||H|^2.
\end{equation*}
Further, it holds 
$
\norm{g_B}_{L^{\infty}(\R^d)} \lesssim  |H| \norm{\nabla \varfgt- \nabla \varphi_{F+H,G}^T}_{L^{\infty}(\R^d)}.
$
We estimate the last expression by noting that $\overline{\Phi}:= \varphi_{F+H,G}^T- \varfgt$ solves
\begin{equation*}
\frac{1}{T}\overline{\Phi} - \dive \brac{D^2V(\omega(x_d),F+\nabla \varft)\nabla \overline{\Phi}} =  \dive \overline{g} \quad \text{in }\R^d,
\end{equation*}
where
$$
\overline{g}:= \brac{D^2 V(\omega(x_d),F+H+ \nabla \varphi_{F+H}^T)- D^2V(\omega(x_d),F+\nabla \varft)}(G+\nabla \varphi_{F+H,G}^T)$$
satisfies $\norm{\overline{g}}_{L^{\infty}(\R^d)}\lesssim |G||H|$. Hence, we obtain with help of Lemma \ref{lemma:279} 
\begin{equation*}
\norm{g_B}_{L^{\infty}(\R^d)} \lesssim |H| \norm{\nabla \Phi}_{L^{\infty}(\R^d)}\lesssim  |G||H|^2.
\end{equation*}
Finally, we estimate 
\begin{equation*}
\norm{g_C}_{L^{\infty}(\R^d)} \lesssim |G| \norm{\nabla \varphi_{F,H}^T - \nabla \varphi_{F+H}^T + \nabla \varft}_{L^{\infty}(\R^d)}\lesssim  |G||H|^2,
\end{equation*}
where the last inequality follows by applying Lemma \ref{lemma:279} to equation \eqref{eq:538}, cf.\nbs \eqref{eq:691}. Summarizing these bounds, we obtain the desired estimate \eqref{est:gagbgc}.
\end{proof}
\begin{proof}[Proof of Lemma~\ref{L:esttruncatedcorrector}]

Throughout the proof we suppose $F\in U_{\bar\delta}$ and we write $\lesssim$ if $\leq$ holds up to a multiplicative constant depending only on $\alpha,d$ and $p$. 

\step 1 Proof of \eqref{eq:1122}. We first note that by \eqref{eq:795:2}, $v_F:=\hatvarft- \varft$ solves
\begin{equation}\label{eq:824:2}
\frac{1}{T} v_F - \dive \brac{A(x_d,\nabla v_F)} =  \dive \brac{g_{F}(x_d)} \quad \text{in }\R^d,
\end{equation}
where 
$$
A(\cdot,G) := DV(\hatomega,F + \nabla \varft + G)-DV(\hatomega,F + \nabla \varft )
$$ 
and, for an arbitrary $R\in \SO d$,
$$
g_F:= DV(\hatomega,F+ \nabla \varft)-DV(\widetilde{\omega},R)- (DV(\omega,F+ \nabla \varft)-DV({\omega},R))).
$$
Lemma\nobreakspace\ref{lemma:279} applies to \eqref{eq:824:2} and thus Lemma~\ref{lemma:222} yields
\begin{equation}\label{eq:842:2}
\int_{\R}\eta \frac{1}{T}|v_F|^2 + \eta |\nabla v_F|^2 dx_d \lesssim  \int_{\R}\eta |g_{F}|^2 dx_d
\end{equation}
with $\eta(x_d) = \exp(-\gamma \frac{|x_d|}{\sqrt{T}})$ for some $\gamma=\gamma(\alpha,p,d)\in (0,1]$. Since $\omega=\hatomega$ in $B_{\frac{L}{4}}(0)$, we have $g_F=0$ in $B_{\frac{L}{4}}(0)$ and \eqref{eq:393:new} implies $|g_F|\lesssim |F-R| +\norm{\nabla \varft}_{L^{\infty}}$. In combination with \eqref{eq:842:2}, we obtain
\begin{equation}\label{eq:1122:stronger}
 \int_{\R}\eta \frac{1}{T}|v_F|^2 + \eta |\nabla v_F|^2 dx_d \lesssim\int_{\R}\eta |g_F|^2 dx_d \lesssim  \dist^2(F,\SO d) \sqrt{T} \exp\brac{-\frac{\gamma}{4} \frac{L}{\sqrt{T}}}.
\end{equation}
and thus \eqref{eq:1122}.

\step 2 Proof of \eqref{eq:1123}. We first note that by \eqref{eq:1068:2}, $v_{F,G}:= \hatvarfgt- \varfgt$ solves
\begin{equation*}
\frac{1}{T}v_{F,G} - \dive \brac{D^2V(\hatomega(x_d),F+ \nabla \hatvarft) \nabla v_{F,G}} = \dive g_{F,G} \quad \text{in }\R^d,
\end{equation*}
where
$$
g_{F,G}:=\brac{D^2V(\hatomega,F+ \nabla \hatvarft) -D^2V(\omega,F+ \nabla \varft)}(G+ \nabla \varfgt).
$$
For $\eta$ as in Step~1, we have
\begin{equation}\label{est:wwtildevfg}
\int_{\R}\eta \frac{1}{T}|v_{F,G}|^2 + \eta |\nabla v_{F,G}|^2 dx_d \lesssim \int_{\R}\eta |g_{F,G}|^2 dx_d.
\end{equation}
The Lipschitz estimate of Lemma \ref{lemma:388:new} (ii) yields $|G+ \nabla \varfgt| \lesssim |G|$ and we have
\begin{eqnarray*}
|g_{F,G}|&\lesssim & |D^2V(\hatomega(x_d),F+ \nabla \hatvarft)- D^2V(\omega(x_d),F+ \nabla \varft)||G| \\ 
& \leq &
|D^2V(\hatomega(x_d),F+ \nabla \hatvarft)-D^2V(\hatomega(x_d),F+ \nabla \varft)||G| \\
& & + |D^2V(\hatomega(x_d),F+ \nabla \varft)-D^2V(\omega(x_d),F+ \nabla \varft) ||G| \\
& \lesssim  &  |\nabla (\hatvarft-\varft)||G| + \mathbf{1}_{\R \setminus B_{\frac{L}{4}}}|G|.
\end{eqnarray*}
In combination with \eqref{est:wwtildevfg} and \eqref{eq:1122:stronger}, we obtain
\begin{align}\label{eq:1123:stronger}
\int_{\R}\eta |\nabla v_{F,G}|^2 dx_d  & \lesssim  |G|^2 \brac{\int_{\R}\eta |\nabla  (\hatvarft-\varft)|^2 dx_d + \int_{\R \setminus B_{\frac{L}{4}}}\eta dx_d}\notag \\
& \lesssim  |G|^2(\dist^2(F,\SO d)+1) \sqrt{T}\exp\brac{-\frac{\gamma}{4}\frac{L}{\sqrt{T}}}
\end{align}
and \eqref{eq:1123} follows.
 
\step 3 Proof of \eqref{eq:1124}. We first note that by \eqref{eq:1127:2}, $v_{F,G,H}:= \hatvarfght - \varfght$ solves
\begin{equation*}
\frac{1}{T}v_{F,G,H} - \dive\brac{D^2V(\hatomega(x_d), F + \nabla \hatvarft) \nabla v_{F,G,H}} = \dive\brac{g_{F,G,H}+ f_{F,G,H}} \quad \text{in }\R^d, 
\end{equation*}
where 
\begin{align*}
g_{F,G,H}  = & D^3V(\hatomega(x_d), F + \nabla \hatvarft)\brac{H+ \nabla \hatvarfht}\brac{G+ \hatvarfgt} \\ & -D^3V(\omega(x_d), F + \nabla \varft)\brac{H+ \nabla \varfht}\brac{G+ \varfgt}, \\ 
f_{F,G,H} = & \brac{D^2 V(\hatomega(x_d), F+ \nabla \hatvarft)-D^2 V(\omega(x_d), F+ \nabla \varft)}\nabla \varfght. 
\end{align*}
As before, we obtain
\begin{equation}\label{est:wwtildevfgh}
\int_{\R}\eta \frac{1}{T}|v_{F,G,H}|^2 + \eta |\nabla v_{F,G,H}|^2 dx_d \lesssim \int_{\R}\eta \brac{|g_{F,G,H}|^2+ |f_{F,G,H}|^2} dx_d. 
\end{equation}
Furthermore, we compute with help of the estimates in Lemma\nbs\ref{lemma:388:new} and a Taylor expansion 
\begin{align*}
|g_{F,G,H}| 
\lesssim   |H||\nabla(\widehat\varphi_{F,G}^T -\varphi_{F,G}^T)|+|G||\nabla (\widehat\varphi_{F,G}^T -\varphi_{F,G}^T)|+ (\mathbf{1}_{\R\setminus B_{\frac{L}{4}}}+ |\nabla (\hatvarft- \varft)|)|H||G|
\end{align*}
and
$
|f_{F,G,H}| 
\lesssim   \brac{\mathbf{1}_{\R\setminus B_{\frac{L}{4}}} + |\nabla(\hatvarft- \varft)|}|H||G|.
$
As a result of the previous two estimates and relying on \eqref{eq:1122:stronger} and \eqref{eq:1123:stronger}, we obtain
\begin{eqnarray*}
& & \int_{\R}\eta \brac{|g_{F,G,H}|^2 +|f_{F,G,H}|^2}dx_d
\\
& \lesssim &  \int_{\R}\eta \brac{|H|^2|\nabla v_{F,G}|^2 +|G|^2|\nabla v_{F,H}|^2+ (\mathbf{1}_{\R\setminus B_{\frac{L}{4}}}+|\nabla v_F|^2)|H|^2|G|^2}dx_d 
\\
& \lesssim & \brac{\dist^2(F,\SO d)+1}|G|^2 |H|^2 \sqrt{T}\exp\brac{-\frac{\gamma}{4}\frac{L}{\sqrt{T}}}
\end{eqnarray*}
and in combination with \eqref{est:wwtildevfgh} we have \eqref{eq:1124}.
\end{proof}

\section{Proofs of qualitative results: Theorem \ref{T:1:0} and Corollary \ref{cor:T:1}}\label{sec:pr:qual}
In this section, we denote by $V$ the matching convex lower bound for $W$ from Lemma\nbs\ref{C:wv}. We also consider the corresponding RVE approximation
\begin{equation}\label{def:VhomL}
 V_{{\rm hom},L}(\omega,F)=\inf_{\varphi\in H_{\rm per,0}^1(\square_L)}\dashint_{\square_L}V(\omega(x_d),F+\nabla \varphi)\,dx.
\end{equation}
In particular, the infimum on the right-hand side is attained by $\varphi \in H_{\rm per,0}^1(\square_L; \R^d)$ characterized by the equation
\begin{equation}\label{eq:572:3}
-\dive DV(\omega(x_d),F+ \nabla \varphi_{F}^{L}) = 0 \quad \text{in }\Box_{L}.
\end{equation}
Under the assumptions of Theorem\nbs\ref{T:1:0}, standard homogenization results imply
\begin{equation*}
V\h(F) :=  \erwartung{\dashint_{\Box}V(\omega(x_d),F+ \nabla \varphi_{F})dx}=\lim_{L \to \infty} V_{{\rm hom},L}(\omega,F) \qquad\mbox{for $\mathbb P$-a.a.\ $\omega$},
\end{equation*}
where $\varphi_{F}:\Omega\times\R^d\to\R^d$ is the unique solution to
\begin{equation}\label{eq:580:3}
-\dive DV(\omega(x_d),F + \nabla \varphi_{F}) = 0 \quad \text{in }\R^d, \quad\mbox{for $\mathbb P$-a.a.\ $\omega$}
\end{equation}
with the conditions
\begin{align}\label{eq:580:4}
& \dashint_{\Box} \varphi_{F} dx = 0, \quad \nabla \varphi_F \text{ is a stationary random field,}\quad \erwartung{\nabla \varphi_{F}}=0, \quad \erwartung{|\nabla \varphi_{F}|^2}< \infty.
\end{align}
The existence of $\varphi_F$ can be obtained by considering the solution $\varphi_F^T\in H_{\rm uloc}^1(\R^d;\R^d)$ of
\begin{equation*}
\frac{1}{T}\varft - \dive DV(\omega(x_d),F+ \nabla \varft) = 0  \quad \text{in }\R^d,
\end{equation*}
and passing to the limit $T\to \infty$. In particular, it holds 
\begin{align}
& \lim_{T\to \infty}\erwartung{\dashint_{\Box}|\nabla \varft - \nabla \varphi_{F}|^2 dx} = 0, \label{eq:strong} \\
& \lim_{T\to 0}\erwartung{\dashint_{\Box} V(\omega(x_d),F + \nabla \varft)dx} = V\h(F). \label{eq:sth12345}
\end{align}

\begin{proof}[Proof of Theorem~\ref{T:1:0}]
Throughout the proof we write $\lesssim$ if $\leq$ holds up to a multiplicative constant depending only on $\alpha,d$ and $p$.

\step 1 Lipschitz estimates for $\varphi_{F}^{L}$ and $\varphi_F$. 

Let $\varphi_{F}^{L}$ be given by \eqref{eq:572:3}. Lemma\nbs\ref{lem:425} and arguing as in Lemma\nbs\ref{lemma:388:new}\nbs(i), we obtain that $\varphi_F^L$ is one-dimensional in the sense of $\varphi_F^L(x)=\varphi_F^L(x_d)$ and satisfies
\begin{equation}\label{est:LipvarphiL1}
\norm{\nabla \varphi_{F}^L}_{L^{\infty}(\R^d)} \lesssim \dist(F,\SO d)\quad\mbox{for all $L\geq1$}.
\end{equation} 
In Step~1 of the proof of Lemma\nbs\ref{lemma:388:new}, we showed that $\norm{\nabla \varft}_{L^{\infty}(\R^d)} \lesssim \dist(F,\SO d)$ and that $\varphi_F^T$ is one-dimensional. Hence, by \eqref{eq:strong} and weak$^*$ lower semicontinuity of norms, we have
\begin{equation}\label{est:nablavarphilinfty}
\norm{\nabla \varphi_{F}}_{L^{\infty}(\R^d)} \leq \liminf_{T\to\infty}\norm{\nabla \varphi_{F}^T}_{L^{\infty}(\R^d)}\lesssim \dist(F,\SO d)\qquad\mbox{$\mathbb P$-a.s.}
\end{equation}
This means that we may choose $\overline{\delta}= \overline{\delta}(\alpha,p,d)>0$ such that for $F \in U_{\overline{\delta}}$, we have 
\begin{equation}\label{eq:611:3}
\max_{\varphi\in\{\varphi_F^L,\varphi_F^T\}}\|\dist(F+\nabla \varphi,\SO d)\|_{L^\infty(\R^d)}<\delta\qquad\forall L,T\geq1,
\end{equation}
where $\delta$ is given by Lemma\nbs\ref{C:wv}. Moreover, for $\varphi=\varphi_F$ the estimate in \eqref{eq:611:3} holds $\mathbb P$-a.s.\ and \eqref{est:nablavarphilinfty} improves $\erwartung{|\nabla \varphi_F|^2}<\infty$ to
\begin{equation}\label{est:varphigradientP}
\mathbb P[|\nabla \varphi_F|\leq c\dist(F,\SO d)]=1,
\end{equation} 
where $c=c(\alpha,p,d)<\infty$ and $F\in U_{\overline \delta}$.

\step 2 We show that for $F \in U_{\overline{\delta}}$, we have
\begin{equation}\label{eq:618:3}
W_{\mathrm{hom},L}(\omega,F)  = V_{\mathrm{hom},L}(\omega,F) - \mu \det F. 
\end{equation} 
This implies (i), in particular, 
\begin{equation}\label{eq:lim:whomL}
W_{\mathrm{hom}}(F)  = \lim_{L\to \infty} W_{\mathrm{hom}, L}(\omega,F) = V_{\mathrm{hom}}(F) - \mu \det F \qquad\mbox{for $\mathbb P$-a.a. }\omega\in\Omega.
\end{equation}
We first prove
\begin{equation}\label{ineq:whlvhl}
W_{\hom,L}(\omega,F)\geq V_{\hom,L}(\omega,F)-\mu\det(F)\qquad\mbox{for all $F\in \R^{d\times d}$.}
\end{equation}
For every $\varphi \in W^{1,p}_{\mathrm{per}}(\Box\lol;\R^d)$, inequality \eqref{WgeqV} implies 
\begin{equation*}
\dashint_{\Box\lol} W(\omega(x_d),F+ \nabla \varphi)dx \geq \dashint_{\Box\lol} V(\omega(x_d),F+ \nabla \varphi) - \mu \det(F+ \nabla \varphi)dx \geq V_{\mathrm{hom},L}(\omega,F) - \mu \det(F),
\end{equation*}
where we use that $\det$ is a Null-Lagrangian in the form $\dashint_{\square_L}\det(F+\nabla \varphi)\,dx=\det(F)$ for all $\varphi\in W_{\per}^{1,p}(\square_L;\R^d)$ with $p\geq d$. Taking the infimum over $\varphi$, we obtain \eqref{ineq:whlvhl}.\\

Finally, we prove  \eqref{eq:618:3}. Fix $F\in U_{\bar \delta}$ and let $\varphi_F^L\in  W^{1,p}_{\mathrm{per}}(\Box\lol;\R^d)$ be as in Step~1. Then, it holds 
\begin{eqnarray*}
W_{\mathrm{hom},L}(\omega,F) & \leq & \dashint_{\Box\lol}W(\omega(x_d),F+ \nabla \varphi_{F}^L)dx \\
 & \stackrel{\eqref{eq:611:3},\eqref{W=V}}= &\dashint_{\Box\lol}V(\omega(x_d),F+ \nabla \varphi_{F}^L) - \mu \det(F+ \nabla \varphi_{F}^L)dx
  \\
 & =& V_{\mathrm{hom},L}(\omega,F)- \mu \det{F}, 
\end{eqnarray*}
and in combination with \eqref{ineq:whlvhl} we have \eqref{eq:618:3}.

\step 3 Proof of (ii). 

For $F\in U_{\overline \delta}$, let $\varphi_{F}:\Omega\times \R^d\to\R^d$ be characterized by \eqref{eq:580:3} and \eqref{eq:580:4}. A combination of \eqref{eq:611:3}, \eqref{W=V} and Corollary\nbs\ref{cor:613}\nbs(i) implies that $\varphi_{F}(\omega,\cdot)$ is a solution to \eqref{eq:3912} for $\mathbb P$-a.a.\ $\omega\in \Omega$. Moreover, \eqref{eq:580:4} and \eqref{est:varphigradientP} imply sublinearity of $\varphi_F$ in the sense of \eqref{eq:correctorsublinear}. Hence, we have existence of a random field satisfying properties (a)--(c) of Theorem~\ref{T:1:0} part (ii). Uniqueness follows as in the proof of Lemma\nbs\ref{lemma:388:new}\nbs(i) (based on uniqueness for solutions to \eqref{eq:580:3} and \eqref{eq:580:4}).

Finally, combining $\erwartung{\nabla \varphi_F}=0$ and the one dimensionality of $\varphi_F$ in the form $\nabla \varphi_F=(\partial_{x_d} \varphi_F)\otimes e_d$ together with the identity $\det(F+u\otimes v)=\det(F)(1+v^TF^{-1}u)$, we obtain $\erwartung{\det(F+\nabla \varphi_F)}=\det(F)$ and thus
\begin{eqnarray*}
 W_{\hom}(F)&\stackrel{\eqref{eq:lim:whomL}}{=}&V_{\hom}(F)-\mu \det(F)=\erwartung{\dashint_{\Box}V(\omega(x_d),F + \nabla \varphi_{F}) - \mu \det(F+ \nabla \varphi_{F})} \\
 &\stackrel{\eqref{eq:611:3}}{=}& \erwartung{\dashint_{\Box}W(\omega(x_d),F+ \nabla \varphi_{F})dx}.
\end{eqnarray*}
This completes the proof of (ii).

\step 4 $W_{\mathrm{hom},L}(\omega,\cdot)\in C^3(U_{\overline{\delta}})$.

The mapping $F\mapsto \varphi_{F}^{L}$ belongs to $C^2(U_{\overline \delta};W_{\rm per}^{1,\infty}(\Box,\R^d))$ and the derivatives $\varphi_{F,G}^{L}:= D\varphi_{F}^{L}G$ and $\varphi_{F,G,H}^{L}:= D^2\varphi_{F}^{L}HG$ are characterized as the unique (up to a constant) solutions to 
\begin{equation}\label{eq:dphi:per}
-\dive \brac{D^2W(\omega(x_d), F+ \nabla \varphi_{F}^{L})(G+ \nabla \varphi_{F,G}^{L})} = 0 \quad \text{in }\Box_{L}
\end{equation}
and
\begin{eqnarray}\label{eq:ddphi:per}
& & -\dive \brac{D^2W(\omega(x_d), F+ \nabla \varphi_{F}^{L})\nabla \varphi_{F,G,H}^{L}}\notag \\ & = & \dive \brac{D^3W(\omega(x_d),F+ \nabla\varphi_{F}^{L})\brac{H+ \nabla \varphi_{F,H}^L}\brac{G+ \nabla \varphi_{F,G}^L}}  \quad \text{in }\Box_{L}.
\end{eqnarray}
Moreover, they satisfy the Lipschitz bounds
\begin{equation}\label{est:lipschitz:linearperidiccor}
\norm{\nabla \varphi_{F,G}^L}_{L^{\infty}(\R^d)}\lesssim |G|, \qquad \norm{\nabla \varphi_{F,G,H}^L}_{L^{\infty}(\R^d)}\lesssim |G||H|.
\end{equation}
All of these statements follow analogously to the proof of Lemma\nbs\ref{lemma:388:new} (with help of Lemma\nbs\ref{lem:425}) and we omit their proofs. As a result of these facts, the claim follows with
\begin{align*}
DW_{\mathrm{hom},L}(F)G  = & \dashint_{\Box\lol} DW(\omega(x_d),F + \nabla \varphi_{F}^{L})\cdot G dx, \\
D^2W_{\mathrm{hom},L}(F)HG  = & \dashint_{\Box\lol} D^2W(\omega(x_d), F+ \nabla\varphi_{F}^{L})\brac{H+\nabla \varphi_{F,H}^L} \cdot G \\
D^3W_{\mathrm{hom},L}(F)IHG  = &  \dashint_{\Box\lol} D^3W(\omega(x_d),F+ \nabla \varphi_{F}^L)\brac{I+ \nabla \varphi_{F,I}^L}\brac{H+\nabla \varphi_{F,H}^L} \cdot (G+\nabla \varphi_{F,G}^L) dx,
\end{align*}
where we used in the last identity \eqref{eq:dphi:per} and \eqref{eq:ddphi:per} in the form
\begin{align*}
&\dashint_{\Box\lol} D^2W(\omega(x_d),F+ \nabla \varphi_{F}^L)\nabla \varphi_{F,H,I}^L\cdot G dx\\
=&\dashint_{\Box\lol} D^3W(\omega(x_d),F+ \nabla \varphi_{F}^L)\brac{I+ \nabla \varphi_{F,I}^L}\brac{H+\nabla \varphi_{F,H}^L} \cdot \nabla \varphi_{F,G}^L dx.
\end{align*}

\step 5 We show that $W\h\in C^3(U_{\overline{\delta}})$.

Consider $F\in U_{\bar \delta}$, where $\bar\delta$ is chosen as in Step~1. We note that in view of \eqref{eq:611:3} and \eqref{W=V} the unique function $\varphi_F:\Omega\times\R^d\to\R^d$ satisfying \eqref{eq:580:3} and \eqref{eq:580:4} also satisfies \eqref{eq:3912}.

\substep{5.1} Differentiability of $U_{\bar \delta}\ni F\mapsto \varphi_F$. We claim that the mapping $U_{\overline{\delta}}\in F \mapsto \varphi_{F} \in L^{2}(\Omega,W^{1,\infty}(\Box; \R^d))$ is differentiable with derivative $D\varphi_{F}G :=\varphi_{F,G}$ for every $G\in\R^{d\times d}$, where $\varphi_{F,G}:\Omega\times \R^d\to\R^d$ is uniquely characterized via \eqref{eq:linearizedcorrector:1} and \eqref{eq:linearizedcorrector:2} (with $H$ replaced by $G$).

\smallskip

For $G\in\R^{d\times d}$ and $\omega\in \Omega$, we consider $\varfgt(\omega)\in H^1_{\rm uloc}(\R^{d}; \R^d)$ as in Lemma~\ref{lemma:388:new}~(ii). Appealing to Corollary\nbs\ref{cor:613} and Proposition\nbs\ref{prop:appendix1}, we have
$
\lim_{T\to \infty} \erwartung{\dashint_{\Box}|\varfgt - \varphi_{F,G}|^2dx} = 0.
$
Since $\varfgt$ satisfies $\varfgt(\omega,x)=\varfgt(\omega,x_d)$ and $\|\nabla \varfgt\||_{L^\infty(\R^d)}\lesssim |G|$ (see Lemma\nbs\ref{lemma:388:new}\nbs(ii)), we obtain $\varphi_{F,G}(\omega,x)=\varphi_{F,G}(\omega,x_d)$ and $\|\nabla \varphi_{F,G}\|_{L^\infty(\R^d)}\lesssim |G|$ for $\mathbb P$-a.a.\ $\omega\in \Omega$. Furthermore, the proof of Lemma\nbs\ref{lemma:388:new}\nbs(ii) implies for $|G|$ small enough 
\begin{align}\label{eq:dstationarycorrector}
\erwartung{\dashint_{\Box}|\nabla( \varphi_{F+G} -\varphi_{F} -\varphi_{F,G})|^2}=&\lim_{T\to\infty}\erwartung{\dashint_{\Box}|\nabla (\varphi_{F+G}^T - \varphi_{F}^T - \varphi_{F,G}^T)|^2}\notag\\
\leq&\limsup_{T\to\infty} \erwartung{\|\nabla (\varphi_{F+G}^T - \varphi_{F}^T - \varphi_{F,G}^T)\|_{L^\infty(\Box)}^2} \stackrel{ \eqref{eq:705}}{\lesssim} |G|^4.
\end{align}
This means that the mapping $U_{\overline{\delta}}\in F \mapsto \varphi_{F} \in L^{2}(\Omega,H^1(\Box; \R^d))$ is differentiable with derivative $D\varphi_{F}G :=\varphi_{F,G}$ (recall $\dashint_\Box \varphi_F\,dx=0$). From \eqref{eq:dstationarycorrector} and the weak$^*$ lower semicontinuity, we deduce
$
\erwartung{\|\nabla (\varphi_{F+G} - \varphi_{F} - \varphi_{F,G})\|_{L^\infty(\Box)}^2}^\frac12\lesssim |G|^2,
$
and thus the claimed differentiability in $ L^{2}(\Omega,W^{1,\infty}(\Box; \R^d))$.

\substep{5.2} Differentiability of  $U_{\overline \delta}\ni F\mapsto \varphi_{F,G}$. We claim that for every $G\in\R^{d\times d}$ the mapping $U_{\overline{\delta}}\in F \mapsto \varphi_{F,G} \in L^{2}(\Omega,W^{1,\infty}(\Box; \R^d))$ is differentiable.

\smallskip

For $G,H\in\R^{d\times d}$ and $\omega\in \Omega$, we consider $\varfght(\omega)\in H^1_{\rm uloc}(\R^{d}; \R^d)$ as in Lemma~\ref{lemma:388:new}~(iii). Appealing to Corollary\nbs\ref{cor:613} and Proposition\nbs\ref{prop:appendix1}, we have
$
\lim_{T\to \infty} \erwartung{\dashint_{\Box}|\varfght - \varphi_{F,G,H}|^2dx} = 0,
$
where $\varphi_{F,G,H}:\Omega\times \R^d\to\R^d$ is uniquely characterized by
\begin{eqnarray}\label{eq:linearizedcorrector2:1}
& & - \dive\brac{D^2W(\omega(x_d),F+ \nabla \varphi_{F})\nabla \varphi_{F,G,H}} \nonumber \\ & = & \dive \brac{D^3W(\omega(x_d),F + \nabla \varphi_{F})\brac{H+ \nabla \varphi_{F,H} }\brac{G+ \nabla \varphi_{F,G}}} \quad \text{in }\R^d\quad\mbox{for $\mathbb P$-a.a.\ $\omega\in \Omega$}
\end{eqnarray}
and
\begin{align}\label{eq:linearizedcorrector2:2}
& \dashint_{\Box} \varphi_{F,G,H} dx = 0, \quad \nabla \varphi_{F,G,H} \text{ is stationary,} \quad \erwartung{\nabla \varphi_{F,G,H}} = 0, \quad \erwartung{|\nabla \varphi_{F,G,H}|^2}<\infty.
\end{align}
As in the previous substep, we deduce from Lemma\nbs\ref{lemma:388:new}\nbs(iii) that $\varphi_{F,G,H}$ is one-dimensional and satisfies $\norm{\nabla \varphi_{F,G,H}}_{L^{\infty}(\R^d)}\lesssim  |G||H|$ for $\mathbb P$-a.a.\ $\omega\in \Omega$. Furthermore, we obtain for $|H|>0$ small enough
\begin{align*}
\erwartung{\dashint_{\Box}|\nabla (\varphi_{F+H,G} -\varphi_{F,G} -\varphi_{F,G,H})|^2}\leq&\limsup_{T\to\infty}\erwartung{\|\nabla (\varphi_{F+H,G}^T -\varphi_{F,G}^T -\varphi_{F,G,H}^T)\|_{L^\infty(\Box)}^2}\\
  \stackrel{\eqref{eq:123456}}{\lesssim}&  |G|^2 |H|^4.
\end{align*}
From the above estimate, we conclude, as in the previous substep, that $U_{\overline{\delta}}\in F \mapsto \varphi_{F,G} \in L^{2}(\Omega,W^{1,\infty}(\Box; \R^d))$ is differentiable with derivative $D\varphi_{F,G}H :=\varphi_{F,G,H}$. 

\substep{5.3} Differentiability of $W_{\hom}$.

Since $W_{\hom,L}(\omega;\cdot)$ is bounded in $C^3(U_{\overline \delta})$ uniformly in $L$ and $\omega$ (see Step~4), we directly conclude from \eqref{eq:lim:whomL} that $W_{\hom}\in C^{2,\gamma}(U_{\overline \delta})$ for every $\gamma\in(0,1)$.

The $L^2(\Omega;W^{1,\infty}(\Box; \R^d))$ differentiability of $U_{\overline \delta}\ni F\mapsto \varphi_F$ combined with \eqref{eq:611:3} implies
\begin{align*}
DW_{\mathrm{hom}}(F)G  = &\erwartung{\dashint_{\Box} DW(\omega(x_d),F + \nabla \varphi_{F})(G+\nabla \varphi_{F,G}) dx}
\end{align*}
and with help of 
\begin{equation}\label{eq:DWhomvarfg}
\erwartung{\dashint_{\Box} DW(\omega(x_d),F + \nabla \varphi_{F})\nabla \varphi_{F,G} dx}=0
\end{equation}
we obtain the claimed formula \eqref{eq:def:dwhom} for $DW_{\hom}$. The idendity \eqref{eq:DWhomvarfg} is standard and relies on the corrector equation \eqref{eq:3912} together with the stationarity of $\nabla \varphi_F$ and $\nabla \varphi_{F,G}$:  Recall that for any stationary field $\F$ and $\eta\in C_c^\infty(\R)$ it holds
\begin{equation}\label{eq:stationaritycutoff}
\erwartung{\int_\R \eta F}=\erwartung{\F}\int_{\R}\eta dx_d\quad\mbox{and}\quad \erwartung{\int_{\R}\partial_{x_d}\eta \F}=\erwartung{\F}\int_{\R}\partial_{x_d}\eta dx_d = 0.
\end{equation}
Hence, by stationarity and one-dimensionality of $\nabla \varphi_{F}$ and $\nabla \varphi_{F,G}$, we have for any $\eta\in C_c^\infty(\R)$
\begin{align}\label{eq:usecorrectorequation}
&\erwartung{\dashint_{\Box} DW(\omega(x_d),F+\nabla \varphi_F)\nabla \varphi_{F,G}\,dx}\notag\\
\stackrel{\eqref{eq:stationaritycutoff}}{=}&\erwartung{\int_{\R} \eta DW(\omega(x_d),F+\nabla \varphi_F)\nabla\varphi_{F,G}\,dx_d}\notag\\
\stackrel{\eqref{eq:stationaritycutoff}}{=}&\erwartung{\int_{\R}  DW(\omega(x_d),F+\nabla \varphi_F)\nabla(\eta\varphi_{F,G})\,dx_d}\stackrel{\eqref{eq:3912}}{=}0.
\end{align}
Next, we come to the second and third derivative of $W_{\hom}$. The $L^2(\Omega;W^{1,\infty}(\Box; \R^d))$ differentiability of $U_{\overline \delta}\ni F\mapsto \varphi_F$, $U_{\overline \delta}\ni F\mapsto \varphi_{F,G}$ together with \eqref{eq:def:dwhom} and \eqref{eq:611:3} imply $W_{\hom}\in C^3(U_{\overline \delta})$ and
\begin{align*}
D^2W_{\mathrm{hom}}(F)HG  = & \erwartung{\dashint_{\Box} D^2W(\omega(x_d), F+ \nabla\varphi_{F})\brac{H+\nabla \varphi_{F,H}} \cdot G dx}\\
D^3W_{\hom}(F)GHI  = &\erwartung{  \dashint_{\Box} D^3W(\omega(x_d),F+ \nabla \varphi_{F})\brac{I+ \nabla \varphi_{F,I}}\brac{H+ \nabla \varphi_{F,H}} \cdot G dx} \\ & +\erwartung{\dashint_{\Box} D^2W(\omega(x_d),F+ \nabla \varphi_{F})\nabla \varphi_{F,H,I}\cdot G dx}.
\end{align*}
Hence, we obtain \eqref{eq:def:ddwhom} and with help of
\begin{align*}
&\erwartung{\dashint_{\Box} D^2W(\omega(x_d),F+ \nabla \varphi_{F})\nabla \varphi_{F,H,I}\cdot G dx}\notag\\
=&\erwartung{\dashint_{\Box} D^3W(\omega(x_d),F+ \nabla \varphi_{F})(I+\nabla \varphi_{F,I})(H+\nabla \varphi_{F,H})\nabla \varphi_{F,G} dx}
\end{align*}
(wich follows from \eqref{eq:linearizedcorrector:1}, \eqref{eq:linearizedcorrector:2}, \eqref{eq:linearizedcorrector2:1}, \eqref{eq:linearizedcorrector2:2} and a similar calculation as in \eqref{eq:usecorrectorequation}) we arrive at \eqref{eq:def:d3whom}. 

\step 6 Strong rank-one convexity.

This follows from the strong convexity of $V$ and thus $V_{\hom,L}$ and $V_{\hom}$ combined with the fact that $F\mapsto\det(F)$ is rank-one affine (see \cite[Proof of Theorem~2, Step~6]{neukamm2018quantitative} for details).

\end{proof}

\begin{proof}[Proof of Corollary~\ref{cor:T:1}]

Assuming $p$-growth of the form \eqref{eq:standard:pgrowth} the result follows by standard $\Gamma$-convergence arguments, see e.g.\ \cite[Lemma 3.2]{NSS17} in a general ergodic (and discrete) setting. Below, we provide a direct argument which makes use of existence and regularity of the corrector $\varphi_F$ for $F\in U_{\overline \delta}$.

We only show $W\h(F)\geq \overline{W}\h(F)$ since the reverse inequality is trivial. For given $L\in\N$ and $\varepsilon\in(0,1)$, we consider a cut-off function $\eta\in C^{\infty}_0(\Box_{L})$ satisfying $\eta = 1$ in $\Box_{(1-\varepsilon)L}$ and $|\nabla \eta| \lesssim \frac{1}{\varepsilon L}$, where here and for the rest of the proof $\lesssim$ means $\leq$ up to a multiplicative constant that depends on $\alpha,d$ and $p$. For $\varphi_F$ as in Theorem~\ref{T:1:0}, we have
\begin{eqnarray*}
& & \inf_{\varphi \in W^{1,p}_0(\Box_{L}; \R^d)}\dashint_{\Box_{L}} W(\omega(x_d),F+ \nabla \varphi(x))dx \nonumber \\ & \leq & \dashint_{\Box_{L}} W(\omega(x_d),F+ \nabla (\eta \varphi_F))dx \\
& \leq &  \dashint_{\Box\lol}W(\omega(x_d),F+ \nabla \varphi_F)dx + \frac{1}{|\Box_{L}|}\int_{\Box_{L}\setminus\Box_{(1-\varepsilon)L}}W(\omega(x_d),F+ \nabla (\eta \varphi_F))dx, \nonumber 
\end{eqnarray*}
where we used $W\geq0$ in the last inequality.  By the subadditive ergodic theorem (cf. \cite{AkcogluKrengel81}) the left-hand side converges to $\overline{W}\h(F)$ as $L\to\infty$ and thus by \eqref{eq:whomviastationarycorrector} we obtain 
\begin{align}\label{eq:111111}
\overline W\h(F)\leq W\h(F)+\limsup_{L\to\infty}\frac{1}{|\Box_{L}|}\int_{\Box_{L}\setminus\Box_{(1-\varepsilon)L}}W(\omega(x_d),F+ \nabla (\eta \varphi_F))dx\qquad\mbox{for $\mathbb P$-a.a.\ $\omega\in \Omega$}.
\end{align}
In order to bound the second term on the right-hand side in \eqref{eq:111111}, we observe
\begin{eqnarray*}
 \|\dist(F+ \nabla (\eta \varphi_F),\SO d)\|_{L^\infty(\Box_L)}&\leq& \dist(F;\SO d)+\|\nabla \varphi_F\|_{L^\infty(\R^d)}+\|\varphi_F\otimes \nabla \eta\|_{L^\infty(\Box_L)}\\
 &\stackrel{\eqref{est:stationarygradient}}{\lesssim}&\dist(F,\SO d)+\frac1{\varepsilon L}\|\varphi_F\|_{L^\infty(\Box_L)},
\end{eqnarray*}
where we use $|\nabla \eta|\leq (\varepsilon L)^{-1}$ in the second estimate. In view of the sublinearity of the corrector \eqref{eq:correctorsublinear}, we have for $\mathbb P$-a.a. $\omega\in\Omega$ 
$$
\|\dist(F+ \nabla (\eta \varphi_F),\SO d)\|_{L^\infty(\Box_L)}\lesssim \dist(F,\SO d)
$$
provided $L$ is sufficiently large (depending on $\alpha,d,\varepsilon$, $p$ and $\omega$). Hence, for ${\overline{\overline \delta}}={\overline{\overline \delta}}(\alpha,d,p)>0$ sufficiently small, we have $\|\dist(F+ \nabla (\eta \varphi_F),\SO d)\|_{L^\infty(\Box_L)}\leq \alpha$ for all $F\in U_{\overline{\overline \delta}}$ and thus 
$$
\limsup_{L\to\infty}\frac{1}{|\Box_{L}|}\int_{\Box_{L}\setminus\Box_{(1-\varepsilon)L}}W(\omega(x_d),F+ \nabla (\eta \varphi_F))dx\stackrel{\eqref{ass:wd2lip}}\lesssim \limsup_{L\to \infty} \frac{|\Box_{L}\setminus\Box_{(1-\varepsilon)L}|}{|\Box_{L}|}\lesssim \varepsilon
$$
and the desired inequality $\overline{W}\h(F)\leq W\h(F)$ follows from the arbitrariness of $\varepsilon>0$.
\end{proof}

\section{Stochastic intermediate results}\label{sec:stochastic}

In this section, we provide optimal estimates on the growth of the correctors $\varft,\varfgt$ and $\varfght$ that are constructed in Lemma~\ref{lemma:388:new}. For this, we recall in Section~\ref{sec:spectral:gap} some results from the literature. In Section~\ref{sec:correctors:statements}, we state the estimates and the corresponding proofs are in Section~\ref{proofs:interm:stoch}.

\subsection{Multiscale decomposition and spectral gap}\label{sec:spectral:gap}

The spectral gap inequalities of Definition~\ref{sgs}, imply $L^q$-versions of the spectral gap inequality:
{
\begin{lemma}[ {\cite[Proposition 1.10]{DuerinckxGloria2020b}}]\label{L:SQq}
Suppose $\mathbb P$ satisfies \ref{stationarity} and a spectral gap estimate of Definition \ref{sgs} (i). Then there exists $C=C(\rho)<\infty$ such that for any $q\geq1$
\begin{equation}\label{SQ:qth1}
\erwartung{| \F(\omega)-\erwartung{\F(\omega)}|^{2q}}^\frac1{2q}\leq Cq\erwartung{\int_{\R}\bigg|\brac{\int_{B_1(s)} \left| \frac{\partial\F}{\partial \omega} \right|}^2 ds\bigg|^q}^\frac1{2q}
\end{equation}
Moreover, the analogous statement holds with $\mathbb E$ replaced by $\mathbb E_L$.
\end{lemma}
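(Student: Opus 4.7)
The plan is an induction on $q\geq 1$, with the base case $q=1$ being precisely the hypothesis of Definition~\ref{sgs}(i). Set $Z:=\F-\erwartung{\F}$ and $\mathcal D:=\int_\R\brac{\int_{B_1(s)}\bigl|\tfrac{\partial\F}{\partial\omega}\bigr|}^2\,ds$, and abbreviate $A_q:=\erwartung{|Z|^{2q}}^{1/(2q)}$, $B_q:=\erwartung{\mathcal D^q}^{1/(2q)}$; the claim amounts to $A_q\leq Cq\,B_q$ for a constant $C=C(\rho)$ that must be independent of $q$.

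For the inductive step I would apply the variance spectral gap to the test random variable $\Psi_q:=\mathrm{sgn}(Z)|Z|^q$. The sup--limsup definition of $\int_{B_1(s)}|\partial(\cdot)/\partial\omega|$ behaves as an upper Lipschitz constant under postcomposition with locally Lipschitz maps, so on the set $\{Z\neq 0\}$ a chain-rule bound (together with a short smoothing argument at the origin) yields
\begin{equation*}
\int_{B_1(s)}\Bigl|\tfrac{\partial\Psi_q}{\partial\omega}\Bigr|\;\leq\;q\,|Z|^{q-1}\int_{B_1(s)}\Bigl|\tfrac{\partial\F}{\partial\omega}\Bigr|,
\end{equation*}
using that $\erwartung{\F}$ is a deterministic constant and therefore has no $\omega$-dependence. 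Pulling the $s$-independent factor $|Z|^{q-1}$ out of the spatial integral and invoking the spectral gap produces
\begin{equation*}
A_q^{2q}-(\erwartung{\Psi_q})^2\;\leq\;\tfrac{q^2}{\rho^2}\,\erwartung{|Z|^{2(q-1)}\mathcal D}.
\end{equation*}
Cauchy--Schwarz bounds $(\erwartung{\Psi_q})^2\leq A_{q-1}^{2(q-1)}A_1^2$, and H\"older with exponents $q/(q-1)$ and $q$ gives $\erwartung{|Z|^{2(q-1)}\mathcal D}\leq A_q^{2(q-1)}B_q^2$. Feeding in the inductive hypothesis together with $A_1\leq B_1/\rho\leq B_q/\rho$ (using the base case and monotonicity of $L^q$-norms on a probability space), dividing through by $B_q^{2q}$, and setting $u_q:=A_q/B_q$, I arrive at
\begin{equation*}
u_q^{2q}\;\leq\;\tfrac{(C(q-1))^{2(q-1)}}{\rho^2}+\tfrac{q^2}{\rho^2}\,u_q^{2(q-1)}.
\end{equation*}

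To close the induction, a Young-type inequality $u_q^{2(q-1)}\leq \tfrac12\tfrac{\rho^2}{q^2}u_q^{2q}+C_{q,\rho}$ absorbs the second term into the left-hand side, leaving $u_q^{2q}$ dominated by an explicit expression of order $q^{2q}$ after taking $2q$-th roots; provided $C$ was chosen sufficiently large at the outset (essentially $C\gtrsim 1/\rho$), this yields $u_q\leq Cq$ and closes the induction. The main obstacle is the constant bookkeeping: a cruder passage in either H\"older or Young easily produces an additional $\sqrt q$ (or worse) factor, so the Young parameter and the inductive constant must be tuned in tandem. The $L$-periodic counterpart, with $\mathbb E_L$ and spatial integration over $(0,L)$ in the hypothesis, is proven by the identical argument using Definition~\ref{sgs}(ii) in place of (i).
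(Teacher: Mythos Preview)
The paper does not give its own proof of this lemma; it simply quotes the result from \cite[Proposition~1.10]{DuerinckxGloria2020b}. Your iterative argument---apply the spectral gap to $\Psi_q=\mathrm{sgn}(Z)|Z|^q$, use the chain-rule bound on the oscillation derivative, then H\"older and a Young inequality to close the recursion $u_q^{2q}\leq \rho^{-2}(C(q-1))^{2(q-1)}+\rho^{-2}q^2 u_q^{2(q-1)}$---is the standard proof and is essentially what appears in that reference; carrying the constants through explicitly shows the induction closes with $C$ of order $1/\rho$, confirming your intuition.

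One small point to tighten: as written, the induction steps from $q-1$ to $q$ with base $q=1$, so it literally covers only integer $q$, whereas the statement is for all real $q\geq 1$. This is easily fixed in either of two ways: (a) note that integer $q$ suffices for every application in the paper (exponential moments via Remark~\ref{rem:exponential} use only $k\in\N$); or (b) observe that your same computation, with $(\erwartung{\Psi_q})^2\leq \erwartung{|Z|^q}^2\leq A_1^{2q}$ by Jensen for $q\in[1,2]$, establishes the estimate for every $q\in[1,2]$ directly, after which the step $q-1\mapsto q$ (valid for any real $q\geq 2$) covers all of $[1,\infty)$.
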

}

\begin{remark}\label{rem:exponential}
 We will frequently deduce from Lemma~\ref{L:SQq} estimates of the form $\erwartung{\F(\omega)^{k}}^\frac1{k}\leq ck$ for all $k\in\mathbb N$ and a random variable $\F\geq0$. By the elementary inequality $\frac{(2ce)^{-k}\erwartung{\F(\omega)^{k}}}{k!}\leq \frac1{2^k}\frac{k^k}{e^k k!}\leq \frac1{2^k}$ this implies exponential moments of $\F$ in the form $\erwartung{\exp\brac{\frac1C \mathcal F}}\leq 2$ with $C=2 ce$. 
\end{remark}

The following lemma is a special case of \cite[Lemma 4.8]{BFFO17} and can be interpreted as an improved Poincar\'e inequality in wich spatial oscillations of the gradient are taken into account (see also \cite[Proposition 6.1]{ArmstrongKuusiMourrat16} for a related deterministic estimate)
\begin{lemma}[{\cite[Lemma 4.8]{BFFO17}}]\label{lemma:405} Let $m\geq2$ and $K\geq 0$. Let $u: \Omega\times \R \to \R$ be a measurable function such that 
\begin{equation*}
\erwartung{\biggl(\dashint_{B_1(x_0)}|\nabla u|^2 dx\biggr)^{\frac{m}2}}^{\frac{1}{m}}\leq K\quad\mbox{and}\quad
\erwartung{\brac{\int_{\R}\nabla u \cdot g}^{m}}^{\frac{1}{m}}\leq K r^{-\frac12}
\end{equation*}
for all $r\geq 2$, all $x_0 \in \R$ and all measurable $g: \R \to \R$ supported in $B_r(x_0)$ satisfying
\begin{equation*}
\brac{\dashint_{B_{r}(x_0)}|g|^{3}}^{\frac{1}{3}}\leq r^{-1}.
\end{equation*}
Then it holds
\begin{equation*}
\erwartung{\biggl(\dashint_{B_r(x_0)}\left| u- (u)_{B_r(x_0)} \right|^2 dx\biggr)^{\frac{m}2}}^{\frac{1}{m}}\leq cKr^\frac12.
\end{equation*} 
\end{lemma}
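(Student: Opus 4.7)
The plan is a dyadic decomposition of the oscillation combined with a duality computation that converts each scale's contribution into the type of linear functional controlled by hypothesis~(ii). Let $K\in\N$ be maximal with $2^K\leq r$. For $x\in B_r(x_0)$ decompose telescopically
\begin{equation*}
u(x)-(u)_{B_r(x_0)}=\bigl(u(x)-(u)_{B_1(x)}\bigr)+\sum_{k=0}^{K-1}\bigl((u)_{B_{2^k}(x)}-(u)_{B_{2^{k+1}}(x)}\bigr)+\bigl((u)_{B_{2^K}(x)}-(u)_{B_r(x_0)}\bigr),
\end{equation*}
and bound the $L^m(\mathbb{P};L^2(B_r(x_0),dx))$ norm of each piece separately, combining at the end via Minkowski's inequality (which applies since $m\geq 2$).

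The telescoping core is handled by duality. Each increment $(u)_{B_\rho(x)}-(u)_{B_{2\rho}(x)}$ equals $\int_{\R} u\,\eta_x$ with $\eta_x:=|B_\rho(x)|^{-1}\mathbf{1}_{B_\rho(x)}-|B_{2\rho}(x)|^{-1}\mathbf{1}_{B_{2\rho}(x)}$ of mean zero and supported in $B_{2\rho}(x)$. Since $\int\eta_x=0$ in one dimension, the primitive $g_x(t):=\int_{-\infty}^t\eta_x$ is compactly supported in $B_{2\rho}(x)$ and bounded by a universal constant, and integration by parts gives $(u)_{B_\rho(x)}-(u)_{B_{2\rho}(x)}=-\int_{\R}\nabla u\cdot g_x$. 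A direct computation yields $(\dashint_{B_{2\rho}(x)}|g_x|^3)^{1/3}\leq C_0$; rescaling by $(2C_0\rho)^{-1}$ makes the $L^3$-normalization in hypothesis~(ii) hold at scale $r'=2\rho\geq 2$, and that hypothesis delivers
\begin{equation*}
\erwartung{\bigl|(u)_{B_\rho(x)}-(u)_{B_{2\rho}(x)}\bigr|^m}^{1/m}\leq c\,K\,\rho^{1/2}\qquad\text{uniformly in $x$ and in $\rho=2^k\geq 1$.}
\end{equation*}
The terminal increment at scale $\rho\sim r$ is handled identically. For the initial piece, the one-dimensional pointwise bound $|u(x)-(u)_{B_1(x)}|^2\leq \int_{B_1(x)}|\nabla u|^2$ (Cauchy--Schwarz applied to $u(x)-u(y)=\int_y^x \nabla u$) combined with Fubini on $B_r(x_0)\times B_1(x)$ and a covering of $B_{r+1}(x_0)$ by unit intervals gives $\erwartung{(\dashint_{B_r(x_0)}|u(\cdot)-(u)_{B_1(\cdot)}|^2)^{m/2}}^{1/m}\leq c\,K$ via hypothesis~(i).

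Taking the $L^2(B_r(x_0),dx)$-norm of the pointwise decomposition, swapping with $L^m(\mathbb{P})$ by Minkowski, and summing the geometric series $\sum_{k=0}^K 2^{k/2}\lesssim 2^{K/2}\lesssim r^{1/2}$ yields the claim, since the first piece contributes $O(K)$ and the dyadic bulk plus terminal increment contribute $O(Kr^{1/2})$. The only substantive obstacle is bookkeeping: choosing the right dual test $g_x$ for each pair of nested balls, verifying its $L^3$-normalization with the constant dictated by hypothesis~(ii), and ensuring that the resulting bounds are uniform in the base point $x\in B_r(x_0)$ so that the $x$-integration produces no hidden $r$-loss. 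Conceptually, the scaling $r^{1/2}$ (rather than the trivial Poincar\'e scaling $r$) emerges because hypothesis~(ii) supplies a $\rho^{-1/2}$-gain per scale, exactly matching the $\rho$-scaling of each dyadic increment under duality.
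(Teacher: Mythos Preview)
The paper does not supply its own proof of this lemma; it is quoted verbatim as a special case of \cite[Lemma~4.8]{BFFO17} and no argument is given here. So there is nothing in the paper to compare your proposal against at the level of technique.

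That said, your plan is the standard one for multiscale Poincar\'e inequalities and is correct: dyadic telescoping of averages, conversion of each increment $(u)_{B_\rho(x)}-(u)_{B_{2\rho}(x)}$ into a gradient functional by taking the primitive of the mean-zero density in one dimension, application of hypothesis~(ii) at scale $2\rho$, and geometric summation $\sum_k 2^{k/2}\lesssim r^{1/2}$. Your handling of the endpoint pieces is also right: the terminal increment has both balls contained in $B_{2r}(x_0)$, so the same duality applies at scale $\sim r$; the unit-scale piece is reduced to hypothesis~(i) via Minkowski in $L^{m/2}(\mathbb P)$ over the base point $x\in B_r(x_0)$, which is legitimate since $m\geq 2$. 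One cosmetic point: you use the symbol $K$ both for the constant in the hypotheses and for the dyadic depth $\lfloor\log_2 r\rfloor$, which makes the concluding sentence ambiguous; rename the dyadic depth.
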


\subsection{Decay estimates for localized correctors}\label{sec:correctors:statements}

\begin{lemma}\label{lemma:388} Let the assumptions of Theorem\nbs\ref{Theorem:2} hold and let $\overline{\delta}=\overline{\delta}(\alpha,p,d)>0$ be as in Lemma~\ref{lemma:388:new}. Then there exist $c=c(\alpha,p,d, \rho)>0$ and a random variable $\mathcal C: \Omega \to \R$ satisfying $\erwartung{\exp(\frac1c \mathcal C)}\leq 2$ such that for any $F \in U_{\overline{\delta}}$, we have the following estimates:

\begin{enumerate}[label=(\roman*)]
\item For all $r\geq 2$, $x_0\in \R^d$, $T\geq 2$ and $R\geq \sqrt{T}$, it holds 
\begin{align}
\biggl(\dashint_{B_r(x_0)}|\varft -(\varft)_{B_{r}(x_0)}|^2 dx\biggr)^\frac12 & \leq  \mathcal C r^{\frac12}\dist(F,\SO d) \label{eq:387}, \\
\frac1{\sqrt{R}}\biggl(\dashint_{B_R(x_0)} |\varft|^2 dx\biggr)^\frac12+\sqrt{\frac{R}T}\left|\dashint_{B_R(x_0)}\varft \; dx \right| & \leq \mathcal C  \dist(F,\SO d).\label{eq:391}
\end{align}

\item  For all $x_0\in \R^d$, $T\geq 2$ and $R\geq \sqrt{T}$, it holds
\begin{equation}\label{eq:470}
\biggl(\dashint_{B_{R}(x_0)} \frac{1}{T}|\varphi_{F}^{2T}-\varft|^2 + |\nabla \varphi_{F}^{2T}-\nabla \varft|^2 dx\biggr)^\frac12 \leq \mathcal C\biggl(\frac{R}{T}\biggr)^\frac12 \dist(F,\SO d).
\end{equation}
\end{enumerate}

\end{lemma}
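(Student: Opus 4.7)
The argument follows a Gloria--Otto scheme, combining the $L^q$-spectral gap of Lemma~\ref{L:SQq} with a sensitivity analysis that exploits the laminate structure. The deterministic inputs are: by Lemma~\ref{lemma:388:new}, $\varft$ depends only on $x_d$ and $\norm{\nabla\varft}_{L^\infty(\R^d)}\leq c\,\dist(F,\SO d)$; by Corollary~\ref{cor:613}, the field $\mathbb L(x):=D^2W(\omega(x_d),F+\nabla\varft)$ is uniformly elliptic and bounded; taking expectations in \eqref{eq:372:new} and invoking uniqueness for $\tfrac1T u-\dive(\mathbb L\nabla u)=0$ yields $\erwartung{\varft}=0$; and since $\partial_\omega DW(\cdot,R)=0$ for $R\in\SO d$ (by frame indifference and $W\in C^3$), a Taylor expansion combined with the Lipschitz bound on $\nabla\varft$ gives $|\partial_\omega DW(\omega(x_d),F+\nabla\varft)|\leq c\,\dist(F,\SO d)$ a.e.

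The sensitivity engine is as follows. For $g\in L^2(\R)$, $i\in\{1,\dots,d\}$ and the scalar functional $\F(\omega):=\int_\R g(x_d)(\partial_d\varft)_i(x_d)\,dx_d$, I introduce the 1D dual $\psi:\R\to\R^d$ solving a massive equation of the form $\tfrac1T\psi-(A^T\psi')'=g'e_i$ in $\R$, where $A$ is the reduced 1D coefficient matrix obtained from $\mathbb L$ along $e_d$ (as in Lemma~\ref{lemma:279}). Integration by parts combined with the linearized equation for the sensitivity of $\varft$ with respect to $\omega$ yields
\[
\int_{B_1(s)}\left|\frac{\partial\F}{\partial\omega}\right|\leq c\,\dist(F,\SO d)\int_{B_1(s)}|\psi'|\,dx_d,
\]
while the 1D energy estimate for $\psi$ gives $\norm{\psi'}_{L^2(\R)}^2\leq cT\norm{g}_{L^2(\R)}^2$. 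Plugging this into Lemma~\ref{L:SQq} and using $\erwartung{\F}=0$ produces the key moment estimate $\erwartung{|\F|^{2q}}^{1/(2q)}\leq cq\,\dist(F,\SO d)\norm{g}_{L^2(\R)}$ for all $q\geq 1$.

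Estimate \eqref{eq:387} is obtained by applying Lemma~\ref{lemma:405} componentwise (in 1D) to $u=(\varft)_i/\dist(F,\SO d)$: hypothesis (i) follows from the deterministic Lipschitz bound, and (ii) from the moment estimate above together with $\norm{g}_{L^2(\R)}\leq cr^{-1/2}$ (which follows from $(\dashint_{B_r}|g|^3)^{1/3}\leq r^{-1}$ by H\"older); the one-dimensionality of $\varft$ then allows one to pass from the resulting 1D Poincar\'e estimate on $B_r(x_0^d)\subset\R$ to the $d$-dimensional statement, and Remark~\ref{rem:exponential} upgrades the polynomial-in-$q$ bounds to an exponential moment on $\mathcal C$. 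For the mean value in \eqref{eq:391}, the same sensitivity scheme is applied to the 1D marginal density $\rho(x_d):=|B_R(x_0)|^{-1}|B_R(x_0)\cap(\R^{d-1}\times\{x_d\})|$ of $\chi_{B_R(x_0)}/|B_R(x_0)|$, which satisfies $\norm{\rho}_{L^2(\R)}^2\leq c/R$; this yields $\erwartung{|(\varft)_{B_R(x_0)}|^{2q}}^{1/(2q)}\leq cq\,\dist(F,\SO d)\sqrt{T/R}$, and combining with \eqref{eq:387} at $r=R$ proves \eqref{eq:391}.

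For \eqref{eq:470}, set $w:=\varphi_F^{2T}-\varft$. Subtracting the two corrector equations and invoking Corollary~\ref{cor:613} yields the uniformly elliptic equation $\tfrac{1}{2T}w-\dive(\mathbb A\nabla w)=\tfrac{1}{2T}\varft$ in $\R^d$ with $\mathbb A:=\int_0^1 D^2W(\omega(x_d),F+\nabla\varft+s\nabla w)\,ds$. Lemma~\ref{lemma:222}(ii), applied with mass $1/(2T)$, $g=0$ and $f=(2T)^{-1}\varft$, gives for $R\geq\sqrt{2T}$
\[
\dashint_{B_R(x_0)}\tfrac{1}{2T}|w|^2+|\nabla w|^2\,dx\leq \frac{c}{R^d}\int_{\R^d}e^{-\gamma|x-x_0|/R}\,\tfrac{1}{2T}|\varft|^2\,dx,
\]
and decomposing $\R^d$ into the annuli $B_{(k+1)R}(x_0)\setminus B_{kR}(x_0)$ together with \eqref{eq:391} bounds the right-hand side by $c(R/T)\mathcal C^2\dist^2(F,\SO d)$ (since $\sum_k(k+1)^{d+1}e^{-\gamma k}<\infty$); the residual case $\sqrt{T}\leq R<\sqrt{2T}$ is absorbed into the constant by monotonicity in $R$. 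Taking $\mathcal C$ to be the maximum of the three random constants completes the proof. The main obstacle is the sensitivity step, where the 1D structure of $\omega$ must be tracked carefully through the reduction of the $d$-dimensional dual estimate to a 1D massive energy bound, and the $\dist(F,\SO d)$ factor must be extracted from $|\partial_\omega DW|$ via frame indifference; the remaining energy-type arguments and dyadic decompositions are then standard.
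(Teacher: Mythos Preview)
Your approach is essentially identical to the paper's: a sensitivity analysis via a one-dimensional dual problem, the $L^q$-spectral gap of Lemma~\ref{L:SQq}, Lemma~\ref{lemma:405} for \eqref{eq:387}, a zeroth-order functional for the mean value in \eqref{eq:391}, and a weighted energy estimate plus dyadic decomposition for \eqref{eq:470}. Two cosmetic slips to fix: the dual energy estimate for $\psi$ solving $\tfrac1T\psi-(A\psi')'=g'e_i$ reads $\norm{\psi'}_{L^2}^2\leq c\norm{g}_{L^2}^2$ \emph{without} the factor $T$ (your subsequent moment bound is nonetheless the correct one), and $\erwartung{\varft}=0$ follows directly from stationarity (the expectation of the divergence term vanishes, leaving $\tfrac1T\erwartung{\varft}=0$) rather than from a uniqueness argument for a linearized equation with random coefficient $\mathbb L$.
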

The proof is presented in Section\nbs\ref{proofs:interm:stoch}.
\begin{lemma}\label{lemma:525} 
Consider the situation of Lemma~\ref{lemma:388}. Then there exist $c=c(\alpha,p,d, \rho)>0$ and a random variable $\mathcal C: \Omega \to \R$ satisfying $\erwartung{\exp(\frac1c \mathcal C)}\leq 2$ such that for any $F \in U_{\overline{\delta}}$, we have the following estimates:

\begin{enumerate}[label=(\roman*)]
\item For all $r\geq 2$, $x_0\in \R^d$, $T\geq 2$, $R\geq \sqrt{T}$ and $G\in \R^{d \times d}$, we have
\begin{align}
\biggl(\dashint_{B_r(x_0)}|\varfgt -(\varfgt)_{B_{r}(x_0)}|^2 dx\biggr)^\frac12 \leq&  \mathcal C r^\frac12|G| \label{eq:525} \\
\frac1{\sqrt{R}}\biggl(\dashint_{B_R(x_0)} |\varfgt|^2 dx\biggr)^\frac12+\sqrt{\frac{R}{T}}\left|\dashint_{B_R(x_0)}\varfgt dx \right| \leq& \mathcal C |G|.\label{eq:531}
\end{align}

\item For all $x_0 \in \R^d$, $T\geq 2$, $R\geq \sqrt{T}$ and $G\in \R^{d\times d}$, it holds
\begin{equation}\label{eq:556}
\biggl(\dashint_{B_{R}(x_0)} \frac{1}{T}|\varphi_{F,G}^{2T}-\varfgt|^2 + |\nabla \varphi_{F,G}^{2T}-\nabla \varfgt|^2 dx\biggr)^\frac12 \leq \mathcal C \biggl(\frac{R}{T}\biggr)^\frac12|G| .
\end{equation}
\end{enumerate}

\end{lemma}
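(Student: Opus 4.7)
The plan is to follow the scheme used for Lemma~\ref{lemma:388}, now applied to the linearized localized corrector $\varfgt$ characterized by \eqref{eq:516:new}. The three main ingredients are: (a) a Malliavin-type sensitivity calculation, (b) the $L^q$ spectral gap estimate of Lemma~\ref{L:SQq} to control fluctuations of linear functionals of $\nabla\varfgt$, and (c) the improved Poincar\'e inequality of Lemma~\ref{lemma:405} to pass from fluctuations to oscillation estimates. Exponential moments will then be extracted as in Remark~\ref{rem:exponential}. The only genuinely new feature compared with Lemma~\ref{lemma:388} is that the coefficient $\mathbb L(x):=D^2W(\omega(x_d),F+\nabla\varft)$ in \eqref{eq:516:new} itself depends on $\omega$ through the nonlinear corrector $\varft$.

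\textbf{Sensitivity.} The first step is to compute $\partial\varfgt/\partial\omega$ in a direction $\delta\omega$ supported in $B_1(s)\subset\R$. Formal differentiation of \eqref{eq:516:new} shows that the derivative $\delta\varfgt$ solves
$$\tfrac{1}{T}\delta\varfgt-\dive\bigl(\mathbb L(x)\nabla\delta\varfgt\bigr)=\dive h_{\mathrm{dir}}+\dive h_{\mathrm{ind}},$$
where $h_{\mathrm{dir}}=(\partial_\omega D^2W)(\omega(x_d),F+\nabla\varft)\,\delta\omega(x_d)\,(G+\nabla\varfgt)$ is localized in $B_1(s)\cdot\R^{d-1}$, and $h_{\mathrm{ind}}=D^3W(\omega(x_d),F+\nabla\varft)\,\nabla\delta\varft\,(G+\nabla\varfgt)$ carries the indirect dependence through $\delta\varft$ (whose size was already bounded during the proof of Lemma~\ref{lemma:388}). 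The deterministic Lipschitz bounds \eqref{eq:393:new} and \eqref{eq:520:new} provide a uniform $L^\infty$ pre-factor $|G|$ on both inhomogeneities, and the bound $\|\partial_\omega W\|_{C^3(\overline{U_\alpha})}\le1/\alpha$ from the hypotheses of Theorem~\ref{Theorem:2} controls the direct term. Testing the sensitivity equation against the solution to a dual localized problem (as in Lemma~\ref{lemma:388}) yields an estimate on $\int_{B_1(s)}|\partial\varfgt/\partial\omega|$ with the desired scaling and with linear dependence on $|G|$.

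\textbf{Fluctuations and Poincar\'e.} Combining the sensitivity bound with Lemma~\ref{L:SQq}, the fluctuations of $\int_\R\nabla\varfgt\cdot g\,dx_d$ (which arises from testing \eqref{eq:516:new} against the solution of a dual problem with right-hand side $g$) are controlled by
$$\mbbE\Bigl[\bigl|\int_\R\nabla\varfgt\cdot g-\mbbE[\cdot]\bigr|^{m}\Bigr]^{1/m}\lesssim m\,|G|\,r^{-1/2}$$
for every $g$ supported in $B_r(x_0)$ with $r\ge2$ and normalized as in Lemma~\ref{lemma:405}. Since $\mbbE[\nabla\varfgt]=0$ follows from stationarity and the corrector equation, and since the deterministic bound \eqref{eq:520:new} supplies the ingredient $\dashint_{B_1(x_0)}|\nabla\varfgt|^2\le c|G|^2$, the improved Poincar\'e inequality of Lemma~\ref{lemma:405} applied to $x_d\mapsto\varfgt$ delivers \eqref{eq:525}. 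Estimate \eqref{eq:531} then follows by combining \eqref{eq:525} with a bound on the average $\dashint_{B_R(x_0)}\varfgt\,dx$: the latter is obtained by testing \eqref{eq:516:new} against a cutoff supported in $B_{2R}(x_0)$, exploiting the massive term $\tfrac{1}{T}\varfgt$ and the Lipschitz bound on $\nabla\varfgt$, and yields the factor $\sqrt{R/T}$. Exponential moments come from Remark~\ref{rem:exponential}.

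\textbf{Comparison (ii).} Setting $\psi:=\varphi_{F,G}^{2T}-\varfgt$ and subtracting \eqref{eq:516:new} for $T$ from the analogous equation for $2T$, a direct computation gives
$$\tfrac{1}{T}\psi-\dive\bigl(D^2W(\omega(x_d),F+\nabla\varft)\nabla\psi\bigr)=\tfrac{1}{2T}\varphi_{F,G}^{2T}+\dive h,$$
where the inhomogeneity $h$ collects the difference $\bigl(D^2W(\omega,F+\nabla\varphi_F^{2T})-D^2W(\omega,F+\nabla\varft)\bigr)(G+\nabla\varphi_{F,G}^{2T})$ and is pointwise controlled by $c|G|\,|\nabla\varphi_F^{2T}-\nabla\varft|$ via the $C^3$-regularity of $W$ and \eqref{eq:520:new}. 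Testing against $\eta\psi$ with the exponential cutoff $\eta(x)=\exp(-\gamma|x-x_0|/\sqrt{T})$ (as in the proof of Lemma~\ref{L:esttruncatedcorrector}), and then invoking \eqref{eq:470} to absorb $\nabla\varphi_F^{2T}-\nabla\varft$, together with \eqref{eq:531} applied at $2T$ to handle the massive contribution $\tfrac{1}{2T}\varphi_{F,G}^{2T}$, yields \eqref{eq:556}.

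\textbf{Main obstacle.} The chief difficulty is the sensitivity step: the indirect term $h_{\mathrm{ind}}$ couples the Malliavin derivative of $\varfgt$ to that of $\varft$, so the estimate must be bootstrapped from the analysis of the nonlinear corrector and carefully tracked to preserve linear dependence on $|G|$ and the correct scaling in $r$ and $T$. Keeping the pre-factors and integrability exponents consistent through the sensitivity, spectral gap, and improved Poincar\'e steps is the essential bookkeeping.
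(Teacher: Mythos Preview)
Your overall strategy is correct and matches the paper's: the sensitivity computation, the $L^q$ spectral gap combined with Lemma~\ref{lemma:405}, and the comparison argument for part~(ii) are all the right ingredients. The identification of the indirect term $h_{\mathrm{ind}}$ as the main new difficulty is exactly on point; in the paper this is handled by introducing a \emph{second} dual function $h_2$ solving $\tfrac{1}{T}h_2-\partial_{x_d}(a\partial_{x_d}h_2)=\partial_{x_d}\bigl(D^3V(\cdot,F+\nabla\varft)(G+\nabla\varfgt)(\partial_{x_d}h_1\otimes e_d)\bigr)$, so that the contribution of $\nabla\delta\varft$ can be rewritten via the corrector equation~\eqref{eq:491} for $\delta\varft$ and localized back to the support of $\delta\omega$. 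Your phrase ``bootstrapped from the analysis of the nonlinear corrector'' is consistent with this, though the mechanism deserves to be made explicit.

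There is, however, a genuine gap in your derivation of \eqref{eq:531}. You propose to bound the average $\bigl|\dashint_{B_R(x_0)}\varfgt\,dx\bigr|$ by testing \eqref{eq:516:new} against a cutoff and using the massive term together with the deterministic Lipschitz bound on $\nabla\varfgt$. Such a deterministic argument yields at best $\bigl|\dashint_{B_R}\varfgt\bigr|\lesssim |G|\,T/R$ (from testing against a cutoff) or $\bigl|\dashint_{B_R}\varfgt\bigr|\lesssim |G|\sqrt{T}$ (from the energy estimate in Lemma~\ref{lemma:222}(ii)); neither gives the required $|G|\sqrt{T/R}$ for $R$ in the range $[\sqrt{T},T)$. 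The correct argument is stochastic: one applies the sensitivity and spectral gap machinery directly to the random variable $\mathcal F_i:=\dashint_{B_R}\varfgt\cdot e_i$, using a dual function $h_i$ that solves $\tfrac{1}{T}h_i-\partial_{x_d}(a\partial_{x_d}h_i)=f_i$ with $f_i=\tfrac{1}{|B_R|}\mathbf 1_{B_R}e_i$ (note: right-hand side \emph{without} divergence). The energy estimate \eqref{est:monotonesystemfull} then gives $\int_\R|\partial_{x_d}h_i|^2\lesssim T\int_\R|f_i|^2\lesssim T/R$, which produces the factor $\sqrt{T/R}$ after the spectral gap. This is exactly how the paper proceeds (Step~2, referring back to Step~2 of the proof of Lemma~\ref{lemma:388}).

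One minor point in part~(ii): the exponential cutoff should be at scale $R$, i.e.\ $\eta(x)=\exp(-\gamma|x-x_0|/R)$, not $\sqrt{T}$, so that Lemma~\ref{lemma:222}(ii) delivers control on the average over $B_R(x_0)$ for all $R\ge\sqrt T$; the right-hand side is then handled by a dyadic decomposition combined with \eqref{eq:531} and \eqref{eq:470}, exactly as you indicate.
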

The proof is presented in Section\nbs\ref{proofs:interm:stoch}.
\begin{lemma}\label{lemma:13:58}
Consider the situation of Lemma~\ref{lemma:388}. Then there exist $c=c(\alpha,p,d, \rho)>0$ and a random variable $\mathcal C: \Omega \to \R$ satisfying $\erwartung{\exp(\frac1c \mathcal C)}\leq 2$ such that for any $F \in U_{\overline{\delta}}$, we have the following estimates:

\begin{enumerate}[label=(\roman*)]
\item For all $r\geq 2$, $x_0\in \R^d$, $T\geq 2$, $R\geq \sqrt{T}$ and $G,H\in \R^{d \times d}$, we have
\begin{align}
\biggl(\dashint_{B_r(x_0)}|\varfght -(\varfght)_{B_{r}(x_0)}|^2 dx\biggr)^\frac12 & \leq  \mathcal C |G||H| r^\frac12 \label{eq:620} \\
\frac1{\sqrt{R}}\brac{\dashint_{B_R(x_0)} |\varfght|^2 dx}^{\frac{1}{2}}+\sqrt{\frac{R}T}\left|\dashint_{B_R(x_0)}\varfght dx \right| & \leq \mathcal C |H| |G| .\label{eq:621}
\end{align}
\item For all $x_0\in \R^d$, $T\geq 2$, $R\geq \sqrt{T}$ and $G,H\in \R^{d\times d}$, it holds
\begin{equation}\label{eq:653}
\brac{\dashint_{B_{R}(x_0)} \frac{1}{T}|\varphi_{F,G,H}^{2T}-\varfght|^2 + |\nabla \varphi_{F,G,H}^{2T}-\nabla \varfght|^2}^{\frac{1}{2}} \leq \mathcal{C}|G| |H| \brac{\frac{R}{T}}^{\frac{1}{2}}.
\end{equation}
\end{enumerate}
\end{lemma}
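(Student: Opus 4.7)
The plan is to follow the proof strategies of Lemmas~\ref{lemma:388} and \ref{lemma:525} essentially verbatim. The key structural observation is that $\varfght$ solves a linear elliptic equation with the same principal part $-\dive(D^2W(\omega(x_d),F+\nabla\varft)\nabla\cdot)$ as $\varfgt$, and its right-hand side $h := D^3W(\omega(x_d),F+\nabla\varft)(H+\nabla\varphi_{F,H}^T)(G+\nabla\varfgt)$ is uniformly bounded by $c|G||H|$ thanks to the deterministic Lipschitz bounds \eqref{eq:393:new} and \eqref{eq:520:new}. The coefficient is uniformly elliptic by Corollary~\ref{cor:613}(ii), and $\varfght$ reduces to a one-dimensional function by Lemma~\ref{lemma:388:new}(iii), so the deterministic tools (Lemmas~\ref{lemma:222}, \ref{lemma:279}) apply just as in the earlier cases.

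For part (i), I would apply the improved Poincar\'e estimate of Lemma~\ref{lemma:405} component-wise. Its first hypothesis is immediate from $\|\nabla\varfght\|_{L^\infty(\R^d)}\leq c|G||H|$ in \eqref{eq:615:new}. For the sensitivity hypothesis I consider the random variable $\mathcal F := \int_\R \nabla\varfght\cdot g\,dx_d$ for $g$ supported in $B_r(x_0)$ with $(\dashint_{B_r}|g|^3)^{1/3}\leq r^{-1}$, and apply the spectral gap \eqref{SQ:qth1}. To compute the Malliavin derivative, I differentiate \eqref{eq:611:new} along a perturbation $\delta\omega$ supported in $B_1(s)$; the resulting linear equation for $\partial\varfght$ has the same principal part and a right-hand side collecting three groups of contributions: (a) direct $\delta\omega$-derivatives of $D^2W$ and $D^3W$ evaluated at $F+\nabla\varft$; (b) $D^3W\cdot \nabla\partial\varft\cdot\nabla\varfght$ and $D^3W\cdot\nabla\partial\varft\cdot(H+\nabla\varphi_{F,H}^T)(G+\nabla\varfgt)$; (c) $D^3W\cdot\nabla\partial\varphi_{F,H}^T\cdot(G+\nabla\varfgt)$ and the symmetric term in $G$. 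Dualizing against the solution of $\frac{1}{T}w-\dive(D^2W\nabla w) = -\dive g$ (whose decay is controlled by \eqref{eq:378}) and inserting the sensitivity bounds on $\partial\varft$, $\partial\varphi_{F,H}^T$, $\partial\varfgt$ already obtained in the proofs of Lemmas~\ref{lemma:388} and \ref{lemma:525}, each group yields a contribution of order $|G||H|r^{-1/2}$. Lemma~\ref{lemma:405} then delivers \eqref{eq:620}. The $L^2$ bound \eqref{eq:621} follows by splitting $\varfght = (\varfght-(\varfght)_{B_R}) + (\varfght)_{B_R}$; the oscillation part is handled by \eqref{eq:620}, while the mean is controlled by the weighted energy estimate \eqref{eq:378} applied directly to \eqref{eq:611:new} (with $L^\infty$-bounded source $h$). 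Exponential moments of $\mathcal C$ are extracted from the $k$-th moment bounds as in Remark~\ref{rem:exponential}.

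For part (ii), the difference $\psi:=\varphi_{F,G,H}^{2T}-\varfght$ satisfies an equation of the same form with mass parameter $2T$, and whose right-hand side decomposes into the extra mass contribution $\frac{1}{2T}\varfght$ together with differences of $D^2W$, $D^3W$, $\nabla\varphi_F$, $\nabla\varphi_{F,H}$, $\nabla\varphi_{F,G}$ between scales $T$ and $2T$. Applying the weighted energy estimate \eqref{eq:378} with mass $2T$ and invoking \eqref{eq:621} to control the $L^2$-norm of the dominant source $\frac{1}{2T}\varfght$, while bounding the remaining differences via \eqref{eq:470} and \eqref{eq:556} (with the bounds on $\partial\varphi_{F,H}^{2T}-\partial\varphi_{F,H}^T$ obtained analogously during the proof of Lemma~\ref{lemma:525}), produces exactly the scaling $(R/T)^{1/2}|G||H|$ of \eqref{eq:653}. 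The main obstacle is purely combinatorial bookkeeping: the Malliavin-differentiated equation for $\varfght$ inherits sensitivities of all three lower-order correctors through $D^2W$ and $D^3W$, generating roughly a dozen cross-terms that must be traced through the dual representation---however, each term mirrors one already handled in Lemmas~\ref{lemma:388} and \ref{lemma:525}, so no new ingredient is required, only patience.
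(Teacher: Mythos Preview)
Your overall strategy is the same as the paper's, and your treatment of \eqref{eq:620} and \eqref{eq:653} matches it essentially verbatim. There is, however, a genuine gap in your argument for the mean term in \eqref{eq:621}.

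You claim that $\bigl|\dashint_{B_R}\varfght\bigr|$ is controlled by the deterministic weighted energy estimate \eqref{eq:378} applied to \eqref{eq:611:new}. But \eqref{eq:378} only gives
\[
\dashint_{B_R}\frac{1}{T}|\varfght|^2\,dx \lesssim \|h\|_{L^\infty}^2 \lesssim |G|^2|H|^2,
\]
hence $\bigl|\dashint_{B_R}\varfght\bigr|\lesssim \sqrt{T}\,|G||H|$. The second term in \eqref{eq:621} requires the much stronger bound $\bigl|\dashint_{B_R}\varfght\bigr|\leq \mathcal C\sqrt{T/R}\,|G||H|$, and even the first term needs $\frac{1}{\sqrt R}\bigl(\dashint_{B_R}|\varfght|^2\bigr)^{1/2}\leq \mathcal C|G||H|$; the deterministic bound yields only $\sqrt{T/R}\,|G||H|$, which for $R=\sqrt T$ equals $T^{1/4}|G||H|$ and is unbounded in $T$. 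No purely deterministic estimate can do better, since for a non-ergodic ensemble the corrector can genuinely grow like $\sqrt T$.

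The fix is exactly what the paper does (and what you already did for \eqref{eq:620}): run another spectral gap argument on the random variable $\mathcal F_i:=\dashint_{B_R}\varfght\cdot e_i$, using $\erwartung{\mathcal F_i}=0$ by stationarity. The dual problem now has a right-hand side $f_i=\frac{1}{|B_R|}\mathbf 1_{B_R}e_i$ (no divergence), so the energy estimate \eqref{est:monotonesystemfull} for the auxiliary function picks up an extra factor $T$ and one obtains $\erwartung{|\mathcal F_i|^{2q}}^{1/(2q)}\lesssim q|G||H|\sqrt{T/R}$, precisely as in Step~2 of the proof of Lemma~\ref{lemma:388}. The combinatorics of the sensitivity computation is the same as the one you already carried out for \eqref{eq:620}. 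With this correction, your argument is complete and coincides with the paper's.
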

The proof is presented in Section\nbs\ref{proofs:interm:stoch}.
\subsection{Proofs of stochastic intermediate results}\label{proofs:interm:stoch}
\begin{proof}[Proof of Lemma~\ref{lemma:388}]
Throughout the proof we write $\lesssim$ if $\leq$ holds up to a multiplicative constant depending only on $\alpha,d,p$ and $\rho$. Suppose $F\in U_{\bar \delta}$ and recall that, according to the proof of Lemma\nbs\ref{lemma:388:new}, $\varft$ is the unique solution to 
\begin{equation}\label{eq:1601:2}
\frac{1}{T}\varphi^{T}_{F} - \dive DV(\omega(x_d),F + \nabla \varphi^{T}_{F}) = 0 \quad \text{in }\R^d,
\end{equation}
where $V$ corresponds to the matching convex lower bound for $W$ from Lemma\nbs\ref{C:wv}. 

\step 1 Estimate \eqref{eq:387}. We consider the random variable
$
\F(\omega)= \int_{\R} \partial_{x_d} \varft(\omega) \cdot g \; dx_d,
$
where $g: \R\to \R^{d}$ is supported in $B_r(x_0)$ and it satisfies
\begin{equation}\label{ineq:gmssq}
\brac{\dashint_{B_r(x_0)}|g|^3 dx_d}^{\frac{1}{3}} \leq r^{-1}.
\end{equation}
We compute the derivative of equation \eqref{eq:1601:2} with respect to a bounded perturbation $\delta \omega$ supported in $B_1(s)$: The change $\delta \varft$ of the corrector under such perturbation satisfies the linear equation
\begin{equation*}
\frac{1}{T}\delta \varft - \dive \brac{D^2V(\omega(x_d),F+ \nabla \varft)\nabla \delta \varft}  = \dive \brac{\partial_{\omega}DV(\omega(x_d),F+\nabla \varft)\delta\omega} \quad \text{in }\R^d.
\end{equation*} 
According to Lemma \ref{lemma:279}, $\delta\varft$ depends only on $x_d$ and since
\begin{equation}\label{def:vdphif}
x_d \mapsto v_d(x_d):= \partial_{\omega}D_dV(\omega(x_d),F+ \nabla \varft(x_d))\delta \omega(x_d)
\end{equation}
is bounded and compactly supported, $\brac{x_d \mapsto \delta \varft(x_d)} \in H^1(\R;\R^d)$ is the unique weak solution to 
\begin{equation}\label{eq:491}
\frac{1}{T}\delta\varft - \partial_{x_d}\brac{ a(x_d)\partial_{x_d}\delta\varft} = \partial_{x_d}v_d \quad \text{in }\R,
\end{equation}
where $a(x_d): \R^{d}\to \R^{d}$ is given by
\begin{equation}\label{def:af}
 a(x_d)f :=\brac{D^2V(\omega(x_d),F+ \nabla \varft)(f\otimes e_d)}e_d.
\end{equation}
We denote by $h \in H^1(\R;\R^d)$ the unique weak solution to
\begin{equation}\label{eq:498}
\frac{1}{T}h - \partial_{x_d}\brac{a(x_d) \partial_{x_d}h}=\partial_{x_d} g.
\end{equation}
We compute $\delta \F := \lim_{t \to 0}\frac{\F(\omega + t\delta \omega)-\F(\omega)}{t}$ as follows
\begin{align*}
\delta \mathcal{F}  = \int_{\R}\partial_{x_d} \delta \varft \cdot g \; dx_d \stackrel{\eqref{eq:498}}{=} \int_{\R}\frac{1}{T}h\cdot \delta\varft + a\partial_{x_d}h \cdot \partial_{x_d}\varft \; dx_d  \stackrel{\eqref{eq:491}}{=}  \int_{\R} v_d \cdot \partial_{x_d}h \; dx_d. 
\end{align*}
Note that $v_d$ is supported in $B_1(s)$ (for $s\in \R$). Also, we have the following estimate
\begin{equation*}
|\partial_{\omega}D_d V(\omega,F+ \nabla \varft)|= |\partial_{\omega}D_d V(\omega,F+ \nabla \varft)-\partial_{\omega}D_dV(\omega,R)| \lesssim |F-R + \nabla \varft|
\end{equation*}
for an arbitrary $R \in \SO d$, where we use $DV(\omega,R)=D\det(R)$ and thus $\partial_\omega DV(\omega,R)=0$. In particular, with help of \eqref{eq:393:new}, this implies $|v_d|\lesssim \dist(F,\SO d)|\delta \omega|$. Thus we have
\begin{equation*}
\int_{B_1(s)}\left| \frac{\partial \F}{\partial \omega} \right| \lesssim \dist(F,\SO d) \int_{B_1(s)}|\partial_{x_d} h|dx_d. 
\end{equation*}
The $q$th version of the spectral gap inequality (see \eqref{SQ:qth1}) implies
\begin{eqnarray}\label{est:525:almostfinal}
\erwartung{|\F- \erwartung{\F}|^{2q}}^\frac1{2q} & \lesssim& q\dist(F,\SO d)\erwartung{\biggl|\int_{\R}\brac{\int_{B_1(s)}|\partial_{x_d} h|dx_d}^2 ds\biggl|^q}^\frac1{2q}\notag \\
&\lesssim& q\dist(F,\SO d) \erwartung{\biggl|\int_{\R}|\partial_{x_d} h|^2 dx_d\biggr|^q}^\frac1{2q}\notag\\
&\stackrel{\eqref{est:monotonesystemfull},\eqref{eq:498}}\lesssim&  q \dist(F,\SO d) \biggl(\int_{B_{r}(x_0)}|g|^2\,dx_d \biggr)^\frac12\notag \\ 
&\stackrel{\eqref{ineq:gmssq}}\lesssim&  q \dist(F,\SO d) r^{-\frac12}. 
\end{eqnarray} 
Since $\varft$ is a stationary field, we have that $\erwartung{\varft}$ is constant and thus $\erwartung{\F}=\int_\R g\cdot \partial_{x_d}\erwartung{\varft}\,dx_d=0$ . Hence, \eqref{eq:393:new}, \eqref{est:525:almostfinal} and Lemma \ref{lemma:405} imply
\begin{equation*}
\erwartung{\biggl(\dashint_{B_r(x_0)}|\varft-(\varft)_{B_r(x_0)}|^2\,dx\biggr)^{\frac{q}2}}^\frac1q\lesssim q\dist(F,\SO d)r^\frac12.
\end{equation*}
for every $q\geq2$ and we obtain \eqref{eq:387} (see Remark~\ref{rem:exponential}).

\step 2 Estimates \eqref{eq:391}. In view of \eqref{eq:387} and triangle inequality it suffices to show
\begin{equation}\label{eq:388}
\left|\dashint_{B_R(x_0)}\varft \; dx \right|  \leq \mathcal C \sqrt{\frac{T}R}\dist(F,\SO d).
\end{equation}
 We define random variables, for $i\in \cb{1,...,d}$, 
$
\F_i(\omega):= \dashint_{B_R}\varphi_{F}^T(\omega)\cdot e_i dx_d=\int_{\R}\varphi_{F}^T(\omega)\cdot f_i dx_d$, where $ f_i:=\frac{1}{|B_R|}\mathbf{1}_{B_R} e_i.
$ Analogously as in Step~1, we obtain
$
\delta \F_i = \int_{\R}\delta \varft \cdot f_i dx_d= \int_{\R} v_d \cdot \partial_{x_d}h_i dx_d,
$
where $v_d$ is given in \eqref{def:vdphif} and $h_i$ solves
\begin{equation}\label{eq:hi:391}
\frac{1}{T}h_i - \partial_{x_d}(a(x_d)\partial_{x_d} h_i)= f_i
\end{equation}
(see \eqref{def:af} for the definition of $a$). Using $\erwartung{\F_i}=0$ and \eqref{SQ:qth1},  we obtain  
\begin{eqnarray}\label{eq:644}
\erwartung{|\F_i|^{2q}}^\frac1{2q}  &
\lesssim & q\dist(F,\SO d) \erwartung{\biggl|\int_{\R}|\partial_{x_d} h_i|^2 ds\biggr|^q}^\frac1{2q}\notag\\
& \stackrel{\eqref{est:monotonesystemfull},\eqref{eq:hi:391}}{\lesssim}& q\dist(F,\SO d) \sqrt{T}\biggl(\int_{\R}|f_i|^2 ds\biggr)^\frac12 \notag\\ 
& \lesssim & q\dist(F,\SO d)\sqrt{\frac{T}{R}}. 
\end{eqnarray} 
From \eqref{eq:644} we deduce \eqref{eq:388}, which together with \eqref{eq:387} yields \eqref{eq:391}.

\step 3 Estimate \eqref{eq:470}. Using \eqref{eq:1601:2}, we obtain that $\Psi:= \varphi_{F}^{2T}-\varphi_{F}^T$ solves
\begin{equation*}
\frac{1}{2T}\Psi - \dive B(x_d,\nabla \Psi) = \frac{1}{2T}\varft \quad \text{in }\R^d,
\end{equation*}
where $B(x_d,G)= DV(\omega(x_d),F+ \nabla \varft + G )-DV(\omega(x_d),F+ \nabla \varft)$. Lemma~\ref{lemma:222} (ii) in combination with the fact that $\varft$ depends only on $x_d$ yields
\begin{equation*}
\dashint_{B_{R}}\frac{1}{T}|\Psi|^2 + |\nabla \Psi|^2 dx_d\lesssim  \frac{1}{RT} \int_{\R}\eta(x_d) |\varft|^2 dx_d,
\end{equation*}
where $\eta(z):=\exp(-\gamma \frac{|z|}R)$ for some $\gamma=\gamma(\alpha, p, d)\in(0,1]$. Finally, a dyadic decomposition of the integral on the right-hand side together with \eqref{eq:391} and the exponential decay of $\eta$ yield
\begin{eqnarray*}
\frac{1}{\sqrt{RT}} \biggl(\int_{\R}\eta(x_d) |\varft|^2 dx_d\biggr)^\frac12&\leq& \frac1{\sqrt{T}}\sum_{i=1}^\infty \exp(-\gamma 2^{i-1})2^{\frac{i}2}\biggl(\dashint_{B_{2^iR}} |\varft|^2 dx_d\biggr)^\frac12\\
&\stackrel{\eqref{eq:391}}{\leq}&\sqrt{\frac{R}{T}}\mathcal C\dist(F,\SO d)\sum_{i=1}^\infty \exp(-\gamma 2^{i-1})2^{i}
\end{eqnarray*}
and thus \eqref{eq:470} follows (since $\sum_{i=1}^\infty \exp(-\gamma 2^{i-1})2^{i}\lesssim1$).
\end{proof}
\begin{proof}[Proof of Lemma~\ref{lemma:525}]

Throughout the proof we write $\lesssim$ if $\leq$ holds up to a multiplicative constant depending only on $\alpha,d,p$ and $\rho$. As in the proof of Lemma~\ref{lemma:388}, we recall that for $F\in U_{\bar \delta}$ the function $\varfgt$ is the unique one-dimensional solution to 
\begin{equation}\label{eq:1805:2}
\frac{1}{T}\varphi^{T}_{F,G} - \dive \brac{D^2V(\omega(x_d),F + \nabla \varphi^{T}_{F})\brac{G+ \nabla \varfgt} }= 0 \quad \text{in }\R^d,
\end{equation}
where $V$ corresponds to the matching convex lower bound for $W$ from Lemma\nbs\ref{C:wv}.

\step 1 Estimate \eqref{eq:525}. We consider the random variable
$
\F(\omega)= \int_{\R}\partial_{x_d}\varfgt(\omega)\cdot g \; dx_d,
$
where $g: \R\to \R^{d}$ is supported in $B_r(x_0)$ and satisfies \eqref{ineq:gmssq}. We compute the derivative of equation\nbs\eqref{eq:1805:2} with respect to a bounded perturbation $\delta \omega$ supported in $B_1(s)$: First, we note that $\delta \varfgt \in H^1_{\mathrm{loc}}(\R^d;\R^d)$ is one-dimensional in the sense that $\delta\varfgt=\delta\varfgt(x_d)$ and it is the unique weak solution to the ODE (cf. Step~1 of proof of Lemma\nbs\ref{lemma:388})
\begin{align}\label{eq:525:pdeltaphigf}
&\frac{1}{T}\delta \varfgt - \partial_{x_d}\brac{a(x_d)\partial_{x_d}\delta\varfgt}\notag\\
=&\partial_{x_d}\partial_\omega D^2V(\omega(x_d),F+\nabla \varft)\brac{G+\nabla\varfgt\otimes e_d} e_d\delta \omega\notag\\
&+\partial_{x_d} D^3V(\omega(x_d),F+ \nabla \varft)(\nabla \delta \varft)(G+\nabla \varfgt)e_d \quad \text{in }\R,
\end{align}   
where $a(x_d)$ is given in \eqref{def:af}. We denote by $h_1$ the unique weak solution in $H^{1}(\R;\R^d)$ to
\begin{equation}\label{eq:525:h1}
\frac{1}{T}h_1 - \partial_{x_d} \brac{a(x_d)\partial_{x_d}h_1}= \partial_{x_d}g \quad \text{in }\R
\end{equation}
and by $h_2\in H^1(\R;\R^d)$ the unique weak solution to
\begin{equation}\label{eq:525:h2}
\frac{1}{T}h_1 - \partial_{x_d} \brac{a(x_d)\partial_{x_d}h_2}= \partial_{x_d}(D^3V(x_d,F+\nabla \varphi_F^T)(G+\nabla \varphi_{F,G}^T)(\partial_{x_d}h_1\otimes e_d))\quad \text{in }\R.
\end{equation}
Similarly as in Step~1 of the proof of Lemma~\ref{lemma:388}, by testing \eqref{eq:525:h1} with $\delta\varfgt$, \eqref{eq:525:pdeltaphigf} with $h_1$, \eqref{eq:525:h2} with $\delta\varft$ and finally \eqref{eq:491} with $h_2$, we obtain
\begin{align*}
\delta F=&\int_{\R}\partial_{x_d}\delta \varfgt \cdot g \; dx_d \\
 =& \int_{\R}\partial_\omega a(x_d)((G+\nabla\varphi_{F,G}^T)e_d)\delta\omega\cdot \partial_{x_d}h_1\,dx_d\\
 &+\int_\R D^3V(\omega(x_d),F+ \nabla \varft)(\nabla \delta \varft)(G+\nabla \varfgt)e_d \cdot \partial_{x_d}h_1 dx_d\\
 =&\int_{\R}\partial_\omega a(x_d)((G+\nabla\varphi_{F,G}^T)e_d)\delta\omega\cdot \partial_{x_d}h_1\,dx_d\\
 &+\int_\R \partial_\omega DV(\omega(x_d),F+ \nabla \varft)\delta\omega e_d \cdot \partial_{x_d}h_2 \; dx_d
\end{align*}
and thus for all $s\in\R$ (using \eqref{eq:393:new} and \eqref{eq:520:new})
\begin{equation*}
\dashint_{B_1(s)} \left| \frac{\partial \F}{\partial \omega} \right| \lesssim \dashint_{B_1(s)} |G||\partial_{x_d}h_1|+ \dist(F,\SO d)|\partial_{x_d}h_2| dx_d.
\end{equation*}
Using $\int_{\R}|\partial_{x_d} h_2|^2\lesssim |G|^2 \int_{\R}|\partial_{x_d} h_1|^2\lesssim |G|^2\int_{B_r(x_0)}|g|^2$, $|F|\lesssim1$ and $\erwartung{\F}=0$, we obtain
\begin{align*}
\erwartung{|\F|^{2q}}|^\frac1{2q} & \lesssim  q|G|\erwartung{\biggl|\int_{\R}|\partial_{x_d} h_1|^2 ds\biggr|^q}^\frac1{2q}\\
& \lesssim  q|G| \biggl(\int_{B_{r}(x_0)}|g|^2 ds\biggr)^\frac12 \\ 
& \lesssim  q|G| r^{-\frac12}
\end{align*} 
and thus applying Lemma~\ref{lemma:405} we deduce \eqref{eq:525}.

\step 2 Estimate \eqref{eq:531}. This follows analogously to Step 2 of the proof of Lemma \ref{lemma:388} and using similar arguments as in Step 1 of this proof.

\smallskip

\step 3 Estimate \eqref{eq:556}. Using equation \eqref{eq:1805:2}, we obtain that $\Psi:= \varphi_{F,G}^{2T}-\varphi_{F,G}^T$ solves
\begin{equation*}
\frac{1}{2T}\Psi - \dive \brac{D^2V(\omega(x_d),F+\nabla \varft)\nabla \Psi} = \frac{1}{2T}\varfgt + \dive \widetilde{g} \quad \text{in }\R^d,
\end{equation*}
where $\widetilde{g}= \brac{D^2V(\omega(x_d),F+ \nabla \varphi^{2T}_F)-D^2V(\omega(x_d), F+ \nabla \varphi^{T}_F)}(G+ \nabla \varphi_{F,G}^{2T})$. Note that $\Psi$ is one-dimensional and solves an ODE corresponding to the above equation. Thus Lemma\nbs\ref{lemma:222} and \eqref{eq:520:new} yield
\begin{align*}
\dashint_{B_{R}}\frac{1}{T}|\Psi|^2+ |\nabla \Psi|^2\,dx_d & \lesssim \frac{1}{R}\int_{\R}\eta |\widetilde{g}|^2+ \frac{1}{T}\eta |\varfgt|^2 dx_d \\ 
& \lesssim  \frac{1}{R}\int_{\R}|G|^2 \eta |\nabla \varphi_{F}^{2T}- \nabla \varphi_{F}^{T}|^2 + \frac{1}{T}\eta |\varfgt|^2 dx_d,
\end{align*}
where $\eta(z)=\exp(-\gamma\frac{|z|}R)$, $\gamma=\gamma(\alpha, p, d)>0$. Combining estimates \eqref{eq:531} and \eqref{eq:470} with a dyadic decomposition of $\R$ and the exponential decay of $\eta$, we obtain (as in Step~3 of Lemma \ref{lemma:388}) 
\begin{equation*}
\biggl(\frac1{R}\int_\R \eta(\frac1T|\varfgt|^2+|G|^2|\nabla(\varphi_F^{2T}-\varft)|^2)\,dx_d\biggr)^\frac12\leq \mathcal C|G|\biggl(\frac{R}T\biggr)^\frac12.
\end{equation*}
This completes the proof.
\end{proof}

\begin{proof}[Proof of Lemma~\ref{lemma:13:58}]
Throughout the proof we write $\lesssim$ if $\leq$ holds up to a multiplicative constant depending only on $\alpha,d,p$ and $\rho$. As before, we note that $\varfght$ is the unique one-dimensional solution to
\begin{eqnarray}\label{eq:1901:2}
& & \frac{1}{T}\varfght - \dive\brac{D^2V(\omega(x_d),F+ \nabla \varft)\nabla \varfght} \nonumber \\ & = & \dive \brac{D^3V(\omega(x_d),F + \nabla \varft)\brac{H+ \nabla \varphi_{F,H}^T }\brac{G+ \nabla \varfgt}} \quad \text{in }\R^d.
\end{eqnarray}

\step 1 Estimate \eqref{eq:620}. We consider the random variable
$
\F(\omega)= \int_{\R}\partial_{x_d}\varfght(\omega)\cdot g \; dx_d,
$
where $g: \R\to \R^{d}$ is supported in $B_r(x_0)$ and satisfies \eqref{ineq:gmssq}. We compute the derivative of equation\nbs\eqref{eq:1901:2} with respect to a bounded perturbation $\delta \omega$ supported in $B_1(s)$: First, we note that $\delta \varfght \in H^1_{\mathrm{loc}}(\R^d;\R^d)$ is one-dimensional in the sense that $\delta\varfght=\delta\varfght(x_d)$ and it is the unique weak solution to the ODE (cf. Step~1 of proof of Lemma\nbs\ref{lemma:388})
\begin{align*}
\frac{1}{T}\delta \varfght - \partial_{x_d} a(x_d)\partial_{x_d} \delta \varfght=\partial_{x_d}v(x_d)e_d \quad \text{in }\R,
\end{align*}
where $a(x_d)$ is given as in \eqref{def:af} and
\begin{align*}
\begin{split}
v = & \partial_{\omega} D^2V(\omega(x_d), F+\nabla \varft)\delta \omega \nabla \varfght \\ & + D^3V(\omega(x_d), F+ \nabla \varft)(\nabla \delta \varft) (\nabla \varfght) \\ & + \partial_{\omega}D^3V(\omega(x_d),F+ \nabla \varft)(H+ \nabla \varphi_{F,H}^T)(G+\nabla \varfgt)\delta \omega \\& +D^4V(\omega(x_d), F+ \nabla\varft)(\nabla \delta \varft)(H+\nabla \varphi_{F,H}^T)(G+\nabla \varfgt)\\& + D^3V(\omega(x_d), F+\nabla \varft)(\nabla \delta \varphi_{F,H}^T)(G+\nabla \varfgt) \\ & + D^3V(\omega(x_d), F+\nabla \varft)(H+\nabla  \varphi_{F,H}^T)(\nabla\delta \varfgt)
\end{split}
\end{align*}
As in Step~1 of the proof of Lemma \ref{lemma:525}, we compute $\delta \F := \lim_{t \to 0}\frac{\F(\omega + t\delta \omega)-\F(\omega)}{t}$ as follows
\begin{align*}
\delta \F = \int_{\R} v_d \cdot \partial_{x_d}h_1,
\end{align*}
where $h_1\in H^1(\R;\R^d)$ denotes the unique weak solution of \eqref{eq:498}. In a very similar manner to Step~1 of the proof of Lemma~\ref{lemma:525}, thus omitting the details here, we obtain the following estimate, for $q\geq1$,
$
\erwartung{|\F|^{2q}}|^\frac1{2q}  \lesssim  q|G||H|r^{-\frac12}.
$
This yields the claim.

\step 2 Estimate \eqref{eq:621}. These estimates follow analogously to Step~1 of this proof and Step~2 of the proof of Lemma \ref{lemma:388}.

\step 3 Estimate \eqref{eq:653}. We note that using equation \eqref{eq:1901:2}, $\Psi:= \varphi_{F,G,H}^{2T} - \varphi_{F,G,H}^{T}$ solves
\begin{equation}\label{eq:1974:2}
\frac{1}{2T}\Psi - \dive \brac{D^2V(\omega(x_d),F+ \nabla \varft)\nabla \Psi}=\frac{1}{2T}\varphi_{F,G,H}^T + \dive \widetilde{g} \quad \text{in }\R^d,
\end{equation}
where
\begin{align*}
\widetilde{g}= & \brac{D^2V(\omega(x_d),F+\nabla \varphi_F^{2T})-D^2V(\omega(x_d),F+\nabla \varphi_F^{T})}\nabla \varphi_{F,G,H}^{2T}\\ & + D^3V(\omega(x_d),F+\nabla \varphi_{F}^{2T})(H+ \nabla \varphi_{F,H}^{2T})(G+ \nabla \varphi_{F,G}^{2T})\\ & -D^3V(\omega(x_d),F+\nabla \varphi_{F}^{T})(H+ \nabla \varphi_{F,H}^{T})(G+ \nabla \varphi_{F,G}^{T})\\
= & \brac{D^2V(\omega(x_d),F+\nabla \varphi_F^{2T})-D^2V(\omega(x_d),F+\nabla \varphi_F^{T})}\nabla \varphi_{F,G,H}^{2T}\\ & + D^3V(\omega(x_d),F+\nabla \varphi_{F}^{2T})(H+ \nabla \varphi_{F,H}^{2T})(\nabla \varphi_{F,G}^{2T}-\nabla \varphi_{F,G}^{T})\\ & + D^3V(\omega(x_d),F+\nabla \varphi_{F}^{2T})(\nabla \varphi_{F,H}^{2T}-\nabla \varphi_{F,H}^{T})(G+\nabla \varphi_{F,G}^{T})\\ &+ \brac{D^3V(\omega(x_d),F+\nabla \varphi_{F}^{2T})-D^3V(\omega(x_d),F+\nabla \varphi_{F}^{T})}(H+ \nabla \varphi_{F,H}^{T})(G+ \nabla \varphi_{F,G}^{T}).
\end{align*}
The Lipschitz continuity of $D^2V(\omega,\cdot)$ and $D^3V(\omega,\cdot)$, combined with estimates \eqref{eq:520:new} and \eqref{eq:615:new} yield
\begin{equation*}
|\widetilde{g}|\lesssim  |H||G||\nabla \varphi_{F}^{2T}-\nabla \varphi_{F}^{T}| + |H||\nabla \varphi_{F,G}^{2T}-\nabla \varfgt|+|G| |\nabla \varphi_{F,H}^{2T}-\nabla \varphi_{F,H}^{T}|.
\end{equation*}
We remark that $\Psi$ is one-dimensional and thus \eqref{eq:1974:2} boils down to an ODE, hence Lemma~\ref{lemma:222} yields
\begin{eqnarray*}
\dashint_{B_{R}(x_0)}\frac{1}{T} |\Psi|^2 + |\nabla \Psi|^2 dx_d  & \lesssim & \frac{1}{R} \brac{\int_{\R} \eta |\widetilde{g}|^2 dx_d + \frac{1}{T} \int_{\R} \eta |\varphi_{F,G,H}^T|^2 dx_d}\\
& \lesssim &
\frac{1}{R} \int_{\R} \eta \brac{|H|^2|G|^2|\nabla \varphi_{F}^{2T}- \nabla \varft|^2} dx_d \\ & & +\frac{1}{R}\int_{\R}\eta\brac{|H|^2 |\nabla \varphi_{F,G}^{2T}-\nabla \varfgt|^2+ |G|^2 |\nabla \varphi_{F,H}^{2T}-\nabla \varphi_{F,H}^{T}|^2}dx_d\\
& & + \frac{1}{RT} \int_{\R} \eta |\varfght|^2 dx_d,
\end{eqnarray*}
where $\eta(z)=\exp(-\gamma\frac{|z|}{R})$ with $\gamma=\gamma(\alpha, p, d)>0$. Using a dyadic decomposition of $\R$ combined with \eqref{eq:470} and \eqref{eq:556}, we conclude \eqref{eq:653}.  
\end{proof}

\section{Proofs of quantitative results: Theorem \ref{Theorem:2} and Corollary\nobreakspace\ref{cor:T:2}}\label{sec:rve}

\subsection{Random fluctuations, Proof of Theorem~\ref{Theorem:2}, part (i)}

\begin{proof}[Proof of Theorem~\ref{Theorem:2}, part (i)]

Throughout the proof we write $\lesssim$ if $\leq$ holds up to a multiplicative constant depending only on $\alpha,d,p$ and $\rho$.

\step 1 Preparation.

For given $\omega_L\in \Omega_L$ and $F\in \R^{d\times d}$, we denote by $\varphi_F^L$ the unique corrector for the matching convex lower bound, i.e.\ the unique minimizer of \eqref{def:VhomL} (and solution to \eqref{eq:572:3}). We recall that $\varphi_F^L\in W^{1,\infty}(\square_L; \R^d)$ is one-dimensional in the sense that $\varphi_F^L(x)=\varphi_F^L(x_d)$ and satisfies estimate \eqref{est:LipvarphiL1}. Moreover, for all $F\in U_{\overline\delta}$ with $\overline\delta(\alpha,d,p)>0$ as in Theorem~\ref{T:1:0}, we have $\|\dist(F+\nabla \varphi_F^L,\SO d)\|_{L^\infty(\square_L)}<\delta$ with $\delta>0$ as in Lemma~\ref{C:wv} and thus
\begin{align}\label{eq:whvh:variance}
W_{\hom,L}(\omega_L,F)=&\dashint_{\square_L}W(\omega_L(x_d),F+\nabla \varphi_F^L)\,dx\notag\\
=&\dashint_0^LV(\omega_L(x_d),F+\nabla \varphi_F^L(x_d))\,dx_d-\mu\det(F).
\end{align}

\step 2  Fluctuations of $W_{{\rm hom},L}$, proof of \eqref{est:fluctwhl}.

We define a random variable $\mathcal{F}: \Omega_L \to [0,\infty)$ by
\begin{equation*}
\mathcal{F}(\omega_L):=W_{\hom,L}(\omega_L,F)\stackrel{\eqref{eq:whvh:variance}}{=}\dashint_{(0,L)}V(\omega_L(x_d), F+ \nabla \varphi_F^L) dx_d-\mu \det(F).
\end{equation*}
We compute its derivative with respect to a bounded periodic perturbation $\delta \omega\lol$ supported in $B_1(s)+L \Z$: 
\begin{align}\label{eq:320}
\begin{split}
\delta\mathcal{F}& := \lim_{t \to 0}\frac{\F(\omega\lol + t\delta \omega\lol)-\F(\omega\lol)}{t} \\
& =  \dashint_{(0,L)}\partial_\omega V(\omega_L(x_d),F+ \nabla \varphi^L_F) \delta \omega_L + D V(\omega\lol(x_d),F+ \nabla \varphi^L_F) \cdot \nabla \delta\varphi_F^L \; dx_d,
\end{split}
\end{align}
where $\delta\varphi_F^L$ is an $L$-periodic function with zero mean (solution to \eqref{eq:922}). Hence, the second term on the right-hand side in \eqref{eq:320} vanishes. In order to estimate the first term in the right-hand above we make use of $W(\cdot,R)=0$, $DW(\cdot,R)=0$ for all $R\in \SO d$ (hence $\partial_\omega V(\cdot,R)=0$, $\partial_\omega DV(\cdot,R)=0$) and thus 
\begin{align*}
&\partial_\omega V(\omega_L(x_d),F+\nabla \varphi_F^L)\\=&\int_0^1(1-t) D^2\partial_\omega V(\omega_L(x_d),R+t(F-R+\nabla \varphi_F^L))[F-R+\nabla \varphi_F^L,F-R+\nabla \varphi_F^L(x_d)]\,dt\\
\leq&\frac1{2}\|\partial_\omega D^2V\|_{C^0(\overline{U_\alpha})}|F-R+\nabla \varphi_F^L (x_d)|^2\qquad\mbox{for all $R\in\SO d$}.
\end{align*}
Therefore, \eqref{est:LipvarphiL1} and optimization in $R\in\SO d$ yield
\begin{equation*}
\dashint_{B_1(s)} \left| \frac{\partial\mathcal{F}}{\partial\omega\lol} \right| = \sup_{\delta \omega\lol} |\delta\mathcal{F}(\omega\lol,\delta\omega\lol)| \lesssim \frac{1}{L}\dist^2(F,\SO d).
\end{equation*}
Finally, the $q$-th version of the spectral gap inequality in the form of \eqref{SQ:qth1} yields
\begin{equation*}
\erwartunglol{|\mathcal{F}-\erwartunglol{\mathcal{F}}|^{2q}}^\frac1{2q}\lesssim qL^{-1}\dist^2(F,\SO d).
\end{equation*}
This implies the claim \eqref{est:fluctwhl} (see Remark~\ref{rem:exponential}).

\step 3 Fluctuations of $DW_{{\rm hom},L}$, proof of \eqref{est:fluctdwhl}.

As in the previous substep it suffices to show the corresponding claim for $DV$ instead of $DW$. For $F\in U_{\overline \delta}$ and $G \in \R^{d\times d}$, we consider the random variable 
\begin{equation*}
\mathcal{F}(\omega\lol)=\dashint_{(0,L)}DV(\omega\lol(x_d), F+\nabla \varphi^L_F)\cdot G dx_d=DV_{\hom,L}(F)[G].
\end{equation*} 
We compute its derivative with respect to a bounded periodic perturbation $\delta \omega\lol$ supported in $B_1(s)+L \Z$: 
\begin{equation*}
\delta F(\omega_L) = \dashint_{(0,L)}\partial_{\omega}DV(\omega\lol(x_d),F+ \nabla\varphi^L_{F})\delta \omega\lol \cdot G + \hat{\mathbb L}_L(\omega_L(x_d))\nabla \delta\varphi^L_{F} \cdot G \; dx_d,
\end{equation*}
where 
$$
\hat{\mathbb L}_L(\omega_L(x_d)):=D^2V(\omega_L(x_d),F+\nabla \varphi_F^L(x_d)),
$$
and $\delta\varphi^L_{F}\in W_{{\rm per},0}^{1,2}(\square_L)$ solves
\begin{equation}\label{eq:922}
-\dive \hat{\mathbb L}_F^L\nabla \delta \varphi^L_{F}= \dive \brac{\partial_{\omega}DV(\omega\lol(x_d), F+ \nabla \varphi^L_{F}) \delta \omega\lol}\quad \text{in }\square_L.
\end{equation}
As in \cite[Proof of Theorem~9a]{fischer2019optimal} we introduce the auxiliary function $h\in W_{{\rm per},0}^{1,2}(\square_L)$ satisfying
\begin{equation}\label{eq:defhvariancedv}
-\dive \hat{\mathbb L}_L\nabla h= \dive \hat{\mathbb L}_LG\quad \text{in }\square_L
\end{equation}
and obtain (by testing \eqref{eq:defhvariancedv} with $\delta\varphi_F^L$ and \eqref{eq:922} with $h$)
\begin{align*}
 \delta F(\omega_L)=\dashint_{\square_L} \partial_{\omega}DV(\omega\lol(x_d),F+ \nabla\varphi^L_{F})\delta \omega\lol \cdot (G+\nabla h)\,dx.
\end{align*}
Using 
\begin{align*}
 \|\partial_\omega DV(\omega\lol,F+ \nabla\varphi^L_{F})\|_{L^\infty(\square_L)}=\|\partial_\omega DV(\omega\lol,F+ \nabla\varphi^L_{F})\|_{L^\infty(0,L))}\lesssim \dist(F,\SO d),
\end{align*}
and by Lemma~\ref{lem:425} $h(x)=h(x_d)$ with
$
\|\nabla h\|_{L^\infty(\square_L)}\lesssim \|\nabla h\|_{L^\infty(0,L)}\lesssim |G|.
$
Combining the previous three formulas, we obtain
$
\sup_s\dashint_{B_1(s)}\left|\frac{\partial \F}{\partial \omega\lol} \right|\lesssim \dist(F,\SO d)|G|L^{-1}
$
and thus, appealing to the $q$th version of the spectral gap inequality in the form of \eqref{SQ:qth1}
\begin{equation*}
\erwartunglol{|\mathcal{F}-\erwartunglol{\mathcal{F}}|^{2q}}^\frac1{2q}\lesssim qL^{-1}\dist(F,\SO d)|G|.
\end{equation*}
Hence, \eqref{est:fluctdwhl} follows.

\step 4 Fluctuations of $D^2W_{{\rm hom},L}$, proof of \eqref{est:fluctddwhl}.

For $G,H\in \R^{d\times d}$, we define the random variable
\begin{equation*}
\mathcal{F}(\omega\lol):= D^2V_{\hom,L}(\omega_L,F)H\cdot G=\dashint_{[0,L]} D^{2}V(\omega\lol(x_d),F+ \nabla \varphi^{L}_F)\brac{H + \nabla \varphi^L_{F,H}}\cdot G \; dx_d.
\end{equation*}
We compute its derivative with respect to a bounded periodic perturbation $\delta \omega\lol$ supported in $B_1(s)+ L \Z$: 
\begin{align}\label{eq:381}
\delta \mathcal{F} = & \dashint_{(0,L)}  \partial_{\omega}\hat{\mathbb L}_F^L\delta \omega\lol \brac{H+ \nabla\varphi^L_{F,H}}\cdot G \; dx_d \nonumber\\
& + \dashint_{(0,L)} D^3V(\omega\lol(x_d),F+ \nabla \varphi^L_F)\nabla \delta\varphi^L_F \brac{H+ \nabla \varphi^L_{F,H}}\cdot G \; dx_d
\nonumber \\
& + \dashint_{(0,L)}  \hat{\mathbb L}_F^L\nabla \delta \varphi^L_{F,H} \cdot G \; dx_d \nonumber \\ =: &  I_1+ I_2 + I_3, 
\end{align}
where $\delta\varphi_F^L$ solves \eqref{eq:922} and $\delta\varphi^L_{F,H}\in W_{\per,0}^{1,2}(\square_L; \R^d)$ solves
\begin{align}\label{eq:958}
-\dive \brac{\hat{\mathbb L}_F^L \nabla \delta\varphi^L_{F,H}}=&\dive f\quad \text{in }\square_L,  
\end{align}
where
\begin{equation*}
f:=\partial_\omega\hat{\mathbb L}_F^L \brac{H+ \nabla\varphi^L_{F,H}}\delta \omega\lol+D^3V(\omega\lol(x_d),F+ \nabla \varphi^L_F)\nabla \delta\varphi^{L}_F \brac{H+ \nabla \varphi^L_{F,H}}.
\end{equation*}
We estimate the three terms on the right-hand side of \eqref{eq:381} seperately: For the first term, we use
\begin{equation*}
 |I_1|\lesssim |G|\|H+\nabla \varphi_{F,H}^L\|_{L^\infty(\square_L)}\dashint_0^L|\delta\omega_L|\,dx_1\stackrel{\eqref{est:lipschitz:linearperidiccor}}\lesssim |G||H|\dashint_0^L|\delta\omega_L|\,dx_1.
\end{equation*}
To estimate $I_2$, we introduce $h_1\in W_{\per,0}^{1,2}(\square_L; \R^d)$ satisfying
\begin{equation}\label{eq:defh1varianceddv}
-\dive \hat{\mathbb L}_L\nabla h_1= \dive D^3V(\omega\lol(x_d),F+ \nabla \varphi^L_F)[\brac{H+ \nabla \varphi^L_{F,H}},G]\quad \text{in }\square_L.
\end{equation}
and obtain (by testing \eqref{eq:defh1varianceddv} with $\delta\varphi_F^L$ and \eqref{eq:922} with $h_1$)
\begin{align*}
 I_2=\dashint_{\square_L} \partial_{\omega}DV(\omega\lol(x_d),F+ \nabla\varphi^L_{F})\delta \omega\lol \cdot \nabla h_1\,dx\lesssim \dist(F,\SO d)|H||G|\dashint_0^L|\delta \omega_L|.
\end{align*}
It remains to estimate $|I_3|$. Considering $h$ the solution to \eqref{eq:defhvariancedv} we obtain (testing \eqref{eq:defhvariancedv} with $\delta\varphi_{F,H}^L$ and \eqref{eq:958} with $h$)
\begin{align*}
I_3 
 = & \dashint_{(0,L)} \partial_{\omega}\hat{\mathbb L}_F^L \delta \omega\lol \brac{H+ \nabla\varphi^L_{F,H}}
 \cdot \nabla h \; dx_d \\
& + \dashint_{(0,L)} D^3V(\omega\lol(x_d),F+ \nabla \varphi^L_F)\nabla \delta\varphi^{L}_F \brac{H+ \nabla \varphi^L_{F,H}}\cdot \nabla h \; dx_d \\
 =: & I_4 + I_5.
\end{align*}
As before, we have $|I_4|\lesssim  |H||G| \dashint_{(0,L)}|\delta \omega\lol| dx_d$. The same argument as for the estimate of $I_2$ but with with $G$ replaced by $\nabla h$ yields
$
|I_5|\lesssim \dist(F,\SO d)|G||H| \dashint_{(0,L)} |\delta \omega\lol| dx_d.
$
Collecting all these estimates, we obtain
$
\sup_s\dashint_{B_1(s)}\left|\frac{\partial \F}{\partial \omega\lol} \right|\lesssim (1+\dist(F,\SO d))|G||H|L^{-1}
$
and the claim follows via a further application of the spectral gap inequality.

\end{proof}

\subsection{Systematic error, Proof of Theorem~\ref{Theorem:2}, part (ii)}

In order to treat the systematic error, we introduce a localized version of the RVE approximation: For any configuration $\omega: \R\to \R$, and for a compactly supported cut-off function $\eta$ satisfying
\begin{equation}\label{cutoff:eta}
 \eta\in C_c^\infty(\R),\qquad\eta\geq0,\qquad \int_{\R} \eta(x_d)\,dx_d= 1
\end{equation}
we set
\begin{equation}\label{eq:961}
W_{\eta,T}(F)= \int_{\R}\eta(x_d) W(\omega(x_d),F+ \nabla \varft(x_d))dx_d, \quad \text{for }F\in U_{\overline \delta},
\end{equation}
where the localized corrector $\varft$ and $\overline\delta$ are as in Lemma~\ref{lemma:388:new}. Furthermore, we define a cut-off for a realization $\omega\in \Omega$
\begin{equation}\label{def:piL}
\pi\lol \omega(x_d) = \twopartdef{\omega(x_d)}{\text{if }x_d\in [-\frac{L}{4},\frac{L}{4}],}{0}{\text{otherwise.}}
\end{equation}
We define $W_{\eta,T}(\pi\lol \omega,F)$ by replacing $\omega$ by $\pi\lol \omega$ in \eqref{eq:961}. Note that $\omega$ and $\omega\lol$ admit the same distribution on $B_{\frac{L}{4}}(0)$ according to Definition \ref{def:pl}. Thus we have $\erwartunglol{\F(\pi\lol \omega\lol)}=\erwartung{\mathcal{F}(\pi\lol \omega)}$ for any random variable $\F$. This motivates the following decomposition of the systematic error  of $W_{\hom}$:
\begin{eqnarray}\label{eq:1107}
\erwartunglol{W_{\mathrm{hom},L}(F)}- W\h(F)&=&\erwartunglol{W_{\mathrm{hom},L}(F)}-\erwartunglol{W_{\eta,T}(F)}\nonumber\\
& &+ \erwartunglol{W_{\eta,T}(F))}-\erwartunglol{W_{\eta,T}(\pi\lol \omega,F)} \nonumber \\
& &+ \erwartung{W_{\eta,T}(\pi\lol \omega,F)}-\erwartung{W_{\eta,T}(F)}\nonumber\\
& & + \erwartung{W_{\eta,T}(F)}-W\h(F).
\end{eqnarray}
The first and last term on the right-hand side in \eqref{eq:1107} measure the localization error and the middle terms
correspond to the error made by the ensemble periodization. In the following we treat separately these two sources of error, see Lemma~\ref{lem:1131} and \ref{lem:1279} below. In order to estimate the systematic error related to $DW_{\hom}$ and $D^2W_{\hom}$ we use the analogous decomposition as \eqref{eq:1107}, where $DW_{\eta,T}$ and $D^2W_{\eta,T}$ are given via
\begin{align}
DW_{\eta,T}(F)\cdot G =  \int_{\R} & \eta(x_d) DW(\omega(x_d),F+ \nabla \varft(x_d))\cdot \brac{G+ \nabla\varfgt(x_d)} dx_d,\label{def:DWetaT} \\
D^2W_{\eta,T}(F)H \cdot G = \int_{\R} & \eta(x_d) D^2W(\omega(x_d),F+ \nabla \varft(x_d))(H+\nabla \varphi_{F,H}^T(x_d))\cdot (G+\nabla \varfgt(x_d))\notag \\ 
& + \eta(x_d) DW(\omega(x_d),F+ \nabla \varft(x_d))\cdot \nabla \varfght(x_d) dx_d.\label{def:DDWetaT}
\end{align}

\begin{remark}\label{rem:1046}
Assumption \eqref{cutoff:eta} and the stationarity of the random field $(x_d,\omega)\mapsto \F(x_d,\omega):=W(\omega(x_d),F+ \nabla \varft(x_d))$ imply
$$
\erwartung{W_{\eta,T}(F)} = \erwartung{\int_{\R}\eta(x_d)\F(x_d,\omega)dx_d} = \erwartung{\F}\int_{\R}\eta dx_d = \erwartung{\F}
$$
Hence, $\erwartung{W_{\eta,T}}$ coincides for any choice of $\eta$ satisfying \eqref{cutoff:eta}. Clearly, the analogous statements hold for $DW_{\eta,T}$ and $D^2W_{\eta,T}$. 
\end{remark}
\begin{lemma}[Cost of localization]\label{lem:1131}  Suppose the assumptions of Theorem~\ref{Theorem:2} are satisfied. Let $L\geq3$ and let $\mathbb P_L$ be an $L$-periodic approximation of $\mathbb P$ in the sense of Definition~\ref{def:pl}, and denote by $W_{\hom,L}$ the corresponding representative volume approximation.  There exists $c=c(\alpha,p,d,\rho)\in[1,\infty)$ such that for all $F\in U_{\overline \delta}$ with $\overline\delta>0$ as in Lemma~\ref{lemma:388:new} the following estimates are valid

\begin{enumerate}[label=(\roman*)]
\item For all $T\geq \sqrt{2}$, we have for $\ell\in\{0,1,2\}$
\begin{align}
|\erwartung{D^\ell W_{\eta,T}(F)} - D^\ell W\h(F)| & \leq c\dist^{2-\ell}(F,\SO d) \frac{1}{\sqrt{T}}, \label{eq:986}
\end{align}
\item For all $T\in [\sqrt{2},L^2]$, we have for $\ell\in\{0,1,2\}$
\begin{align*}
|\erwartunglol{D^\ell W_{\eta,T}(F)} - \erwartunglol{D^\ell W_{\mathrm{hom},L}(F)}| & \leq c \dist^{2-\ell}(F,\SO d) \frac{1}{\sqrt{T}}.
\end{align*}
\end{enumerate}
\end{lemma}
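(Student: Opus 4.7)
The strategy reduces the systematic error in both parts to an $L^2$-closeness estimate between the localized correctors $\varft, \varfgt, \varfght$ and their stationary (resp.\ periodic) counterparts, and then applies Lemmas~\ref{lemma:388}, \ref{lemma:525} and \ref{lemma:13:58}. The first step, common to both parts, is to replace $W$ by the matching convex lower bound $V$ of Lemma~\ref{C:wv}. Since $F+\nabla\varft$, $F+\nabla\varphi_F$ and $F+\nabla\varphi_F^L$ all take values in $U_\delta$ (by Lemma~\ref{lemma:388:new} and \eqref{eq:611:3}), we have $W=V-\mu\det$ along these gradients. Because each corrector is one-dimensional with stationary, mean-zero derivative and $\det(F+u\otimes e_d)$ is affine in $u$, $\mathbb E[\det(F+\nabla\varphi)]=\det F$; an analogous identity for the linearized correctors $\varphi_{F,G},\varphi_{F,G,H}$ follows from $D\det$ being a null-Lagrangian. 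Consequently the $\det$ contributions cancel in expectation, and both parts reduce to bounds on $|\mathbb E[D^\ell V_{\eta,T}(F)] - D^\ell V_{\hom}(F)|$ in part~(i) and its $\mathbb P_L$-periodic analogue in part~(ii).

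For part~(i) and $\ell=0$: by Remark~\ref{rem:1046} and stationarity, the difference equals $\mathbb E[V(\omega(0), F+\nabla\varft(0)) - V(\omega(0), F+\nabla\varphi_F(0))]$. Taylor-expanding $V$ at $F+\nabla\varphi_F$ and invoking $\mathbb E[DV(\omega,F+\nabla\varphi_F)\cdot \nabla\psi]=0$ for any stationary $\psi$ with $\mathbb E[\nabla\psi]=0$ (as in \eqref{eq:usecorrectorequation}), applied to $\psi=\varft-\varphi_F$, cancels the first-order term, leaving a remainder bounded by $\mathbb E[|\nabla\varft-\nabla\varphi_F|^2]$. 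A dyadic telescope of estimate~\eqref{eq:470} at scale $2^kT$ with $R=\sqrt{2^kT}$ yields $\mathbb E[|\nabla\varft - \nabla\varphi_F|^2]\lesssim \dist^2(F,\SO d)/\sqrt T$, whence the claim. For $\ell=1$: after subtracting $\mathbb E[DV(\omega(0), F+\nabla\varphi_F(0))\cdot \nabla\varfgt(0)]=0$, the error reads $\mathbb E[(DV(\omega,F+\nabla\varft)-DV(\omega,F+\nabla\varphi_F))\cdot (G+\nabla\varfgt)]$. Taylor-expanding $DV$ at $F+\nabla\varphi_F$ and adding the vanishing quantity $\mathbb E[D^2V(\omega,F+\nabla\varphi_F)(\nabla\varft-\nabla\varphi_F)\cdot (G+\nabla\varphi_{F,G})]=0$ (the linearized corrector equation tested against $\varft-\varphi_F$ via the symmetry of $D^2V$) replaces $G+\nabla\varfgt$ by $\nabla\varfgt-\nabla\varphi_{F,G}$ in the leading bilinear term. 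Cauchy--Schwarz combined with $\mathbb E[|\nabla\varft-\nabla\varphi_F|^2]^{1/2}\lesssim \dist/T^{1/4}$ and $\mathbb E[|\nabla\varfgt-\nabla\varphi_{F,G}|^2]^{1/2}\lesssim |G|/T^{1/4}$ (telescoping \eqref{eq:556}) produces $\dist(F,\SO d)|G|/\sqrt T$; the Taylor remainders are absorbed by the exponential moments supplied by Lemmas~\ref{lemma:388} and \ref{lemma:525}. The case $\ell=2$ is analogous but longer: all four corrector equations for $\varphi_F,\varphi_{F,G},\varphi_{F,H},\varphi_{F,G,H}$ and estimate~\eqref{eq:653} enter, each Cauchy--Schwarz pair again producing $1/\sqrt T$.

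Part~(ii) runs through the same script with $(\mathbb P,\varphi_F,\varphi_{F,G},\varphi_{F,G,H})$ replaced by $(\mathbb P_L,\varphi_F^L,\varphi_{F,G}^L,\varphi_{F,G,H}^L)$. The periodic correctors satisfy the equations \eqref{eq:572:3}, \eqref{eq:dphi:per}, \eqref{eq:ddphi:per} on $\Box_L$ and, tested against any $L$-periodic mean-zero random field, they supply the zero identities needed in the Taylor-linearization step. The required closeness $\mathbb E_L[|\nabla\varft-\nabla\varphi_F^L|^2]^{1/2}\lesssim \dist/T^{1/4}$ and its analogues follow from telescoping the $\mathbb P_L$-versions of \eqref{eq:470}, \eqref{eq:556}, \eqref{eq:653}, whose proofs carry over verbatim under the periodic spectral gap estimate of Definition~\ref{sgs}(ii). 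The restriction $T\leq L^2$ is exactly what ensures that the localization scale $\sqrt T$ does not exceed the period $L$, so the doubling estimates behind these lemmas transfer to $\Box_L$ with an $L$-independent constant. The main obstacle is the algebraic bookkeeping for $\ell=2$: three Taylor expansions of $V,\,DV,\,D^2V$ and the four corrector equations must be combined, exploiting the symmetries of $D^2V$ and $D^3V$, so that every resulting term is a product of two factors of order $T^{-1/4}$; pairing a single $T^{-1/4}$-factor with a pointwise-bounded one would produce only the suboptimal $T^{-1/4}$ rate.
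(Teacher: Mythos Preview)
Your approach is correct but takes a genuinely different route from the paper. You pass to the matching convex lower bound $V$ and compare $V_{\eta,T}$ directly with $V_{\hom}$ through the \emph{limit} correctors $\varphi_F,\varphi_{F,G},\varphi_{F,G,H}$: Taylor-expand around $F+\nabla\varphi_F$, cancel the first-order terms via the limiting corrector equations (tested against the stationary mean-zero field $\nabla(\varft-\varphi_F)$, respectively its linearized analogues), and control the remaining quadratic residuals by telescoping \eqref{eq:470}, \eqref{eq:556}, \eqref{eq:653} to obtain $\mathbb E[|\nabla\varft-\nabla\varphi_F|^2]\lesssim T^{-1/2}\dist^2(F,\SO d)$ and the corresponding bounds for the linearized correctors. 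The paper instead works with $W$ directly and never introduces the limit correctors: it telescopes $\mathbb E[W_{\eta,2^{i+1}T}-W_{\eta,2^iT}]$, and for each increment uses the \emph{localized} equations \eqref{eq:372:new}, \eqref{eq:516:new}, \eqref{eq:611:new} to integrate by parts (producing the massive term $\tfrac1T\varft$ and a boundary term killed by stationarity), so that every contribution is controlled by the growth estimates \eqref{eq:391}, \eqref{eq:531}, \eqref{eq:621} paired with \eqref{eq:470}, \eqref{eq:556}, \eqref{eq:653}. The paper's route is fully self-contained within the localized objects, which makes part~(ii) an immediate transcription under $\mathbb P_L$ with no appeal to periodic limit correctors; your route is conceptually more transparent (error $\approx$ $L^2$-distance of corrector gradients) but relies on Theorem~\ref{T:1:0} for the existence and cancellation properties of the stationary correctors and on the Null-Lagrangian identities for $\det$. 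One minor point: the ``exponential moments'' you invoke for the Taylor remainders are not needed, since the deterministic Lipschitz bounds \eqref{eq:393:new}, \eqref{eq:520:new}, \eqref{eq:615:new} and \eqref{est:stationarygradient} already make all remainders pointwise of the correct order.
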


\begin{lemma}[Cost of ensemble periodization]\label{lem:1279}  Suppose the assumptions of Theorem~\ref{Theorem:2} are satisfied. Let $L\geq3$ and let $\mathbb P_L$ be an $L$-periodic approximation of $\mathbb P$ in the sense of Definition~\ref{def:pl}, and denote by $W_{\hom,L}$ the corresponding representative volume approximation.  There exists $c=c(\alpha,p,d,\rho)\in[1,\infty)$ such that for all $F\in U_{\overline \delta}$ with $\overline\delta>0$ as in Lemma~\ref{lemma:388:new} the following estimates are valid
\begin{align}
|\erwartung{D^\ell W_{\eta\lol,T}(\pi\lol(\cdot),F) - D^\ell W_{\eta\lol,T}(\cdot,F)}| & \leq c\dist^{2-\ell}(F,\SO d) \frac{1}{L}, \label{eq:1109}
\end{align}
where $T := \brac{\frac{1}{C}\frac{L}{\ln(L)}}^2$ and $\eta_L$ denotes a nonnegative weight supported in $B_{\sqrt{T}}(0)$ with $|\eta\lol|\leq \frac{c}{\sqrt{T}}$ and $|\nabla \eta\lol|\leq \frac{c}{T}$ for some $c>0$. Moreover, the same estimates are valid with $\mathbb E$ replaced by $\mathbb E_L$.
\end{lemma}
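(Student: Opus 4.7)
The plan is to exploit two facts: $\omega$ and $\pi_L\omega$ agree on $[-L/4,L/4]$, and for the specific choice $\sqrt T=L/(C\ln L)$ with $C$ large, $\mathrm{supp}(\eta_L)\subset B_{\sqrt T}(0)\subset[-L/4,L/4]$ for all $L\geq 3$. Combined with Lemma~\ref{L:esttruncatedcorrector}, which delivers exponential decay in $L/\sqrt T=C\ln L$ for the $L^2(B_{\sqrt T})$-norm of corrector differences between $\omega$ and $\hat\omega:=\pi_L\omega$, this converts the ensemble-periodization error into a factor $L^{-C/(2c)}$ that is bounded by $L^{-1}$ once $C$ is chosen large enough (depending only on $\alpha,p,d$). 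Since the key input is the deterministic Lemma~\ref{L:esttruncatedcorrector}, valid for \emph{any} pair $(\omega,\hat\omega)$ matching on $[-L/4,L/4]$, the argument works verbatim under $\mathbb P$ or $\mathbb P_L$.

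For $\ell=0$, since $\eta_L$ is supported where $\omega=\hat\omega$, I reduce to
\begin{equation*}
 W_{\eta_L,T}(\omega,F)-W_{\eta_L,T}(\hat\omega,F)=\int_{\R}\eta_L(x_d)\bigl[W(\omega(x_d),F+\nabla\varft)-W(\omega(x_d),F+\nabla\hatvarft)\bigr]\,dx_d.
\end{equation*}
By \eqref{eq:393:new} both arguments lie in $U_\delta$ from Lemma~\ref{C:wv}. Since $DW(\omega,R)=0$ for $R\in\SO d$, a first-order Taylor expansion combined with the bound \eqref{ass:wd2lip} yields $|DW(\omega,F+\nabla\varft)|+|DW(\omega,F+\nabla\hatvarft)|\lesssim\dist(F,\SO d)$. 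A pointwise Lipschitz estimate, H\"older's inequality with $\|\eta_L\|_\infty\leq c/\sqrt T$ and $|B_{\sqrt T}|=2\sqrt T$, followed by \eqref{eq:1122}, then give
\begin{equation*}
 |W_{\eta_L,T}(\omega,F)-W_{\eta_L,T}(\hat\omega,F)|\,\lesssim\,\dist(F,\SO d)\Bigl(\dashint_{B_{\sqrt T}}|\nabla\varft-\nabla\hatvarft|^2\,dx_d\Bigr)^{1/2}\,\lesssim\,\dist^2(F,\SO d)\,L^{-C/(2c)}.
\end{equation*}
Choosing $C\geq 2c$ and taking expectations yields \eqref{eq:1109} for $\ell=0$.

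For $\ell=1,2$ I follow the same strategy after a telescoping decomposition of \eqref{def:DWetaT} respectively \eqref{def:DDWetaT}: each difference of a product is written as a sum of terms in which exactly one factor is one of the three corrector differences $\nabla(\varft-\hatvarft)$, $\nabla(\varfgt-\hatvarfgt)$, or $\nabla(\varfght-\hatvarfght)$, while the remaining factors are bounded in $L^\infty$ uniformly in $\omega$ by suitable products of $|G|$, $|H|$, and $\dist(F,\SO d)$ (the latter coming from the Taylor estimate above applied to $DW$, and the former two from Lemma~\ref{lemma:388:new}). Applying the corresponding decay estimate \eqref{eq:1122}, \eqref{eq:1123}, or \eqref{eq:1124} to each piece, together with $\|\eta_L\|_\infty\leq c/\sqrt T$ as above, produces a uniform deterministic bound matching the claimed right-hand side of \eqref{eq:1109} times $L^{-C/(2c)}$. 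Choosing $C$ large enough and taking expectations completes the proof for $\mathbb E$; the $\mathbb E_L$-version follows by the same deterministic computation.

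The main obstacle is the bookkeeping in the case $\ell=2$: telescoping both terms in \eqref{def:DDWetaT} produces several pieces, and those involving $DW$ rather than $D^2W$ as the undifferentiated factor require the Taylor argument to supply the right scaling in $\dist(F,\SO d)$. Once this matching of factors is confirmed in each piece, Lemma~\ref{L:esttruncatedcorrector} closes the estimate automatically.
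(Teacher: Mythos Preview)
Your proposal is correct and follows essentially the same approach as the paper's proof: both reduce the difference $W_{\eta_L,T}(\omega,F)-W_{\eta_L,T}(\hat\omega,F)$ (and its derivatives) via $\omega=\hat\omega$ on $\mathrm{supp}\,\eta_L$, telescope the integrand to isolate corrector differences, bound the remaining factors in $L^\infty$ using Lemma~\ref{lemma:388:new} and the Taylor argument based on $DW(\cdot,R)=0$, and then close with the deterministic exponential decay of Lemma~\ref{L:esttruncatedcorrector} together with the choice $\sqrt T\sim L/\ln L$. Your observation that the argument is pointwise in $\omega$ and hence works identically under $\mathbb E$ or $\mathbb E_L$ is exactly how the paper handles the final clause as well.
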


\begin{proof}[Proof of Theorem~\ref{Theorem:2}, part (ii)]
Follows directly from Lemma~\ref{lem:1131} and \ref{lem:1279} and the decomposition of the systematic error  \eqref{eq:1107}.
\end{proof}

\begin{proof}[Proof of Lemma~\ref{lem:1131}]

We provide the argument only for part (i), the argument for part (ii) follows the same pattern. Throughout the proof we write $\lesssim$ if $\leq$ holds up to a multiplicative constant depending only on $\alpha,d,p$ and $\rho$. Moreover, we suppose that $\overline \delta$ is as in Lemma~\ref{lemma:388:new}.

\step 1 Estimate \eqref{eq:986} with $\ell=0$.

Since
$
 \lim_{T\to\infty}\erwartung{W_{\eta,T}(F)} =W\h(F) 
$, which follows from $\mathbb E[\dashint_\square \det(F+\nabla \varphi_F^T)]=\det(F)$, Remark~\ref{rem:1046}, \eqref{eq:sth12345} and \eqref{W=V}, we have
\begin{equation*}
W\h(F)-\erwartung{W_{\eta,T}(F)}= \sum_{i= 0}^{\infty} (\erwartung{W_{\eta,2^{i+1}T}(F)} - \erwartung{W_{\eta,2^{i}T}(F)}).
\end{equation*}
and thus it is sufficient to prove
\begin{equation*}
|\erwartung{W_{\eta,2T}(F)} - \erwartung{W_{\eta,T}(F)}|\lesssim  \dist^2(F,\SO d) \frac{1}{\sqrt{T}}\qquad\mbox{for all $T\geq1$.}
\end{equation*}
We compute
\begin{eqnarray*}
& & W_{\eta,2T}(F)-W_{\eta,T}(F) \\ & = & \int_{\R} \eta \brac{W(\omega(x_d),F+ \nabla \varphi_{F}^{2T})-W(\omega(x_d),F+ \nabla \varft)- DW(\omega(x_d),F+ \nabla \varft)\cdot \brac{\nabla \varphi_{F}^{2T}-\nabla \varft}} \\ 
& & + \eta DW(\omega(x_d),F+ \nabla \varft)\cdot\brac{\nabla \varphi_{F}^{2T}-\nabla \varft}dx_d\\
& = &  \int_{\R} 
\eta \brac{W(\omega(x_d),F+ \nabla \varphi_{F}^{2T})-W(\omega(x_d),F+ \nabla \varft)- DW(\omega(x_d),F+ \nabla \varft)\cdot \brac{\nabla \varphi_{F}^{2T}-\nabla \varft}}
\\ & & -\frac{1}{T}\eta \varft \cdot(\varphi_{F}^{2T}-\varft)-DW(\omega(x_d),F+ \nabla \varft)e_d\cdot \partial_{x_d} \eta (\varphi_{F}^{2T}-\varft)dx_d.
\end{eqnarray*}
Taking the expectation of the estimate above and using $\erwartung{\int_{\R}\partial_{x_d}\eta \F}=\erwartung{\F}\int_{\R}\partial_{x_d}\eta dx_d = 0$ for any stationary field $\F$, we obtain
\begin{eqnarray*}
& & |\erwartung{W_{\eta,2T}(F)}-\erwartung{W_{\eta,T}(F)}| \\ & \lesssim &  \erwartung{\int_{\R} \eta |\nabla \varphi_{F}^{2T}-\nabla \varft|^2 dx_d} + \erwartung{ \int_{\R} \frac{1}{T}\eta |\varft| |\varphi_{F}^{2T}-\varft|dx_d}\\ & \leq &  \erwartung{\int_{\R} \eta |\nabla \varphi_{F}^{2T}-\nabla \varft|^2 dx_d} + \erwartung{ \int_{\R} \frac{1}{T}\eta |\varphi_{F}^{2T}-\varft|^2dx_d}^{\frac{1}{2}}\erwartung{ \int_{\R} \frac{1}{T}\eta |\varft|^2 dx_d}^{\frac{1}{2}}.
\end{eqnarray*}
According to Remark \ref{rem:1046}, we may choose $\eta$ supported in $B_{\sqrt{T}}(0)$ with $|\eta|\lesssim \frac{1}{\sqrt{T}}$. In this way, \eqref{eq:391} and \eqref{eq:470} yield the claim.

\step 2 Estimate \eqref{eq:986} with $\ell=1$.

As in Step 1, it is sufficient to show that
\begin{equation*}
|\erwartung{DW_{\eta,2T}(F)\cdot G} - \erwartung{DW_{\eta,T}(F)\cdot G}|\lesssim \dist(F,\SO d)||G| \frac{1}{\sqrt{T}}.
\end{equation*}
We compute
\begin{eqnarray*}
& & DW_{\eta,2T}(F)\cdot G- DW_{\eta,T}(F)\cdot G \\ 
& = &
\int_{\R} \eta \brac{DW(\omega(x_d),F+ \nabla \varphi_{F}^{2T}) \cdot (G+ \nabla \varphi_{F,G}^{2T})-DW(\omega(x_d),F+ \nabla \varft) \cdot (G+ \nabla \varfgt)}dx_d \\
& = &  
\int_{\R} \eta \brac{DW(\omega(x_d),F+ \nabla \varphi_{F}^{2T}) -DW(\omega(x_d),F+ \nabla \varft)} \cdot (G+ \nabla  \varfgt)dx_d \\ 
& & 
+\int_{\R} \eta DW(\omega(x_d),F+ \nabla \varphi_{F}^{2T}) \cdot \brac{\nabla \varphi_{F,G}^{2T} -\nabla \varfgt} dx_d\\
& = & 
 \int_{\R} \eta (DW(\omega(x_d),F+ \nabla \varphi_{F}^{2T})-DW(\omega(x_d),F+ \nabla \varft))\cdot (G + 
\nabla \varfgt))dx_d \\ & & -\int_{\R} \eta D^2W(\omega(x_d),F+\nabla \varft)(\nabla \varphi_{F}^{2T}-\nabla \varft) \cdot (G+ \nabla \varfgt) dx_d
\\ 
& &
+\int_{\R} \eta DW(\omega(x_d),F+ \nabla \varphi_{F}^{2T}) \cdot (\nabla \varphi_{F,G}^{2T}-\nabla \varfgt) dx_d\\ & & + \int_{\R}\eta D^2W(\omega(x_d),F+\nabla \varft)(\nabla \varphi_{F}^{2T}-\nabla \varft)\cdot \brac{G+\nabla \varfgt} dx_d \\
& =: & I_0 + I_1 + I_2 + I_3.
\end{eqnarray*}
We treat the four integrals on the right-hand side in the following. In particular, for this purpose we rely on the cancellations coming from \eqref{eq:470}. Specifically, a Taylor expansion, combined with \eqref{eq:520:new}, \eqref{eq:470} and a suitable choice for $\eta$ (that is  ${\rm supp}\,\eta\subset B_{\sqrt{T}}(0)$ with $|\eta|\lesssim \frac{1}{\sqrt{T}}$) yields
\begin{equation*}
\erwartung{I_0 + I_1} \lesssim |G| \erwartung{\dashint_{B_{\sqrt{T}}(0)}|\nabla \varphi_{F}^{2T}-\nabla \varft|^2dx_d} \lesssim |G|\dist^2(F,\SO d) \frac{1}{\sqrt{T}}.
\end{equation*}
Further
\begin{equation*}
I_2 \stackrel{\eqref{eq:372:new}}{=} -\int_{\R} \frac{1}{T}\eta \varphi_{F}^{2T} \brac{\varphi_{F,G}^{2T}-\varfgt} +\partial_{x_d} \eta DW(\omega(x_d),F+ \nabla \varphi_{F}^{2T})e_d \cdot (\varphi_{F,G}^{2T}- \varfgt)dx_d. 
\end{equation*}
The second term on the right-hand side has zero expectation and thus \eqref{eq:391} and \eqref{eq:556} yield
\begin{equation*}
\erwartung{I_2}\lesssim \dist(F,\SO d)|G|\frac{1}{\sqrt{T}}.
\end{equation*}
Finally, we have
\begin{eqnarray*}
I_3 & = &
\int_{\R} D^2W(\omega(x_d),F+\nabla \varft)\brac{G+\nabla \varfgt}\cdot \eta \brac{\nabla \varphi_{F}^{2T}-\nabla \varft}dx_d\\
& \stackrel{\eqref{eq:516:new}}{=}&  
-\int_{\R}\frac{1}{T}\eta \varfgt \brac{\varphi_{F}^{2T}-\varphi_{F}^{T}} + \partial_{x_d}\eta \brac{D^2W(\omega(x_d),F+ \nabla \varft)(G+ \nabla\varfgt)e_d} \cdot \brac{\varphi_{F}^{2T}-\varphi_{F}^{T}}dx_d.
\end{eqnarray*}
Taking the expactation, the second expression on the right-hand side vanishes. Therefore, using \eqref{eq:470} and \eqref{eq:531} we conclude
$
\erwartung{I_3} \lesssim \dist(F,\SO d)|G|\frac{1}{\sqrt{T}}$.
Collecting all the estimates, the claim follows.

\step 3 Estimate \eqref{eq:986} with $\ell=2$.

As in the first step, it suffices
\begin{equation}\label{eq:1059}
|\erwartung{D^2W_{\eta,2T}(F)H\cdot G} - \erwartung{D^2W_{\eta,T}(F)H\cdot G}|\lesssim |H||G| \frac{1}{\sqrt{T}}.
\end{equation}
We compute (dropping $\omega$ from the notation)
\begin{eqnarray*}
& &  D^2W_{\eta,2T}(F)H\cdot G -  D^2W_{\eta,T}(F)H\cdot G\\
& = &
\int_{\R}\eta(D^2W(F+ \nabla \varphi_{F}^{2T})(H+ \nabla \varphi_{F,H}^{2T})\cdot (G+ \nabla \varphi^{2T}_{F,G})\\
& &
- D^2W(F+ \nabla \varft)(H+ \nabla \varphi_{F,H}^{T})\cdot (G + \nabla \varfgt) )dx_d \\
& & 
+ \int_{\R}\eta \brac{DW(F+\nabla \varphi_{F}^{2T})\cdot \nabla \varphi_{F,G,H}^{2T}- DW(F+\nabla \varft)\cdot \nabla \varfght}dx_d\\
& = & 
\int_{\R}\eta \brac{D^2 W(F+ \nabla \varphi_{F}^{2T})-D^2 W(F+ \nabla \varphi_{F}^{T})}\brac{H+\nabla \varphi_{F,H}^{T}}\cdot\brac{G+\nabla \varphi_{F,G}^{T}}dx_d\\
& & 
-\int_{\R}\eta D^3W(F+\nabla \varft)(\nabla \varphi_{F}^{2T}-\nabla \varphi_{F}^{T})\brac{H+\nabla \varphi_{F,H}^{T}}\cdot \brac{G+\nabla \varphi_{F,G}^{T}}dx_d\\
& & 
+\int_{\R}\eta D^3W(F+\nabla \varft)(\nabla \varphi_{F}^{2T}-\nabla \varphi_{F}^{T})\brac{H+\nabla \varphi_{F,H}^{T}}\cdot \brac{G+\nabla \varphi_{F,G}^{T}}dx_d\\
& & 
+\int_{\R}\eta D^2W(F+\nabla \varftt)\brac{\nabla\varfhtt - \nabla \varfht}\cdot \brac{G+ \nabla \varfgt}dx_d\\
& & 
+\int_{\R}\eta D^2W(F+\nabla \varftt)\brac{H + \nabla \varfhtt}\cdot \brac{\nabla \varfgtt- \nabla \varfgt}dx_d\\
& & 
+ \int_{\R}\eta \brac{DW(F+ \nabla \varftt)-DW(F+ \nabla \varft)-D^2W(F+ \nabla \varft)(\nabla \varftt - \nabla \varft))}\cdot \nabla \varfght dx_d \\
& & 
+ \int_{\R}\eta DW(F+\nabla \varftt)\cdot\brac{\nabla \varfghtt - \nabla \varfght}dx_d\\ 
& &
+ \int_{\R}\eta D^2W(F+ \nabla \varft)\brac{\nabla \varftt - \nabla \varft}\cdot \nabla \varfght dx_d\\
& =: & I_1+ I_2+ I_3+ I_4+ I_5 + I_6+ I_7+ I_8.
\end{eqnarray*}
A Taylor expansion combined with \eqref{eq:470} and \eqref{eq:520:new} yields
\begin{equation*}
|\erwartung{I_1+ I_2}| \lesssim |G||H| \erwartung{\dashint_{B_{\sqrt{T}}}|\nabla \varftt - \nabla \varft|^2} \lesssim |G||H||\dist^2(F,\SO d)\frac{1}{\sqrt{T}}.
\end{equation*}
Testing equation \eqref{eq:611:new} with $\eta(\varphi_F^{2T}-\varphi_F^T)$, we obtain 
\begin{align*}
&I_3+I_8= -\int_{\R} \frac{1}{T}\eta \varfght \brac{\varftt - \varft}dx_d-\int_{\R}\nabla\eta\otimes \brac{\varftt - \varft}\cdot S\,dx_3
\end{align*}
where $S:=D^2W(F+\nabla \varft)\nabla \varfght + D^3W(F+\nabla \varft)(H+ \nabla\varfht)(G+ \nabla \varfgt)$ is a stationary random field. Hence, the expectation of the second term above vanishes and thus \eqref{eq:470} and \eqref{eq:621} yield
$
|\erwartung{I_3 + I_8}|\lesssim|G||H|\frac{1}{\sqrt{T}}.
$
Analogously to the treatment of $I_1+ I_2$, we obtain with help of a Taylor expansion, \eqref{eq:615:new} and \eqref{eq:470}
\begin{equation*}
|\erwartung{I_6}|\lesssim\dist^2(F,\SO d)|G||H|\frac{1}{\sqrt{T}}. 
\end{equation*}
Similar computations as in estimate for $I_3+I_8$ yield (using \eqref{eq:516:new} and \eqref{eq:372:new})
\begin{align*}
|\erwartung{I_4+ I_5}|+|\erwartung{I_7}|\lesssim |G| |H|\frac{1}{\sqrt{T}}.
\end{align*}
Collecting all these bounds we conclude \eqref{eq:1059}.

\end{proof}

\begin{proof}[Proof of Lemma~\ref{lem:1279}]

Throughout this proof, we set $\widehat{\omega} := \pi\lol \omega$, see \eqref{def:piL}, and denote by $\hatvarft$, $\hatvarfgt$, $\hatvarfght$ the solutions to \eqref{eq:372:new}, \eqref{eq:516:new} and \eqref{eq:611:new} with $\omega$ replaced by $\hatomega$. The relevant estimates for $\hatvarft$, $\hatvarfgt$, $\hatvarfght$ are contained in Lemma~\ref{L:esttruncatedcorrector}.

\step 1 Proof of \eqref{eq:1109} with $\ell=0$. 

Note that $\sqrt{T}<\frac{L}4$ (recall $\sqrt{T}<\frac{L}{\log(L)}$) and thus $\omega=\hatomega$ on the support of $\eta_L$. Hence, a Taylor expansion implies
\begin{align*}
 &W_{\eta_L,T}(\hatomega,F)-W_{\eta_L,T}(\omega,F)\notag\\
 =&\int_\R \eta\lol \brac{W(\omega(x_d),F+ \nabla \hatvarft) - W( \omega(x_d),F+ \nabla \varft)}dx_d\notag\\
  =&\int_\R \eta\lol \int_0^1DW(\omega(x_d),F+ \nabla \varft+t\nabla(\varft-\hatvarft))(\nabla(\varft-\hatvarft))dx_d.
\end{align*}
Using $DW(\cdot,R)=0$ for all $R\in \SO d$ and the deterministic estimates $\|\nabla \varft\|_{L^\infty}+\|\nabla \hatvarft\|_{L^\infty}\lesssim \dist(F,\SO d)$ (see \eqref{eq:393:new}), we obtain
\begin{align*}
\erwartung{W_{\eta,T}(\pi\lol\cdot,F)-W_{\eta,T}(F)}\lesssim&\dist(F,\SO d)\erwartung{\dashint_{B_{\sqrt{T}}}|\nabla \varft-\nabla \hatvarft|^2\,dx_3}^\frac12
\end{align*}
and in combination with \eqref{eq:1122} and the choice of $T$ we obtain \eqref{eq:1109}.

\step 2 Proof of \eqref{eq:1109} with $\ell=1$. 

As in the previous step, we use $\omega=\hatomega$ on ${\rm supp}\,\eta$, and Taylor expansion to obtain
\begin{eqnarray*}
& & DW_{\eta\lol,T}(F)\cdot G- DW_{\eta\lol,T}(\pi\lol \omega,F)\cdot G \\ 
&\stackrel{\eqref{def:DWetaT}}{=} & \int_{\R}\eta\lol \brac{DW(\omega(x_d),F+\nabla \varft)\cdot\brac{G+\nabla \varfgt} - DW(\omega(x_d),F+\nabla \hatvarft)\cdot\brac{G+\nabla \hatvarfgt}} dx_d\\
& = & \int_{\R}\eta\lol \brac{(DW(\omega(x_d),F+\nabla \varft)-DW(\omega(x_d),F+\nabla \hatvarft))\cdot\brac{G+\nabla \varfgt}}\,dx_d\\
& &+\int _{\R}\eta\lol DW(\omega(x_d),F+\nabla \hatvarft)\cdot\nabla (\varfgt-\hatvarfgt) dx_d.
\end{eqnarray*}
Hence, the deterministic estimates \eqref{eq:393:new} and \eqref{eq:520:new} imply
\begin{align*}
&(DW_{\eta\lol,T}(F)- DW_{\eta\lol,T}(\pi\lol(\cdot),F))\cdot G\\
\lesssim&|G|\biggl(\dashint_{B_{\sqrt{T}}}|\nabla (\varft-\hatvarft)|^2\,dx_d\biggr)^\frac12+\dist(F,\SO d)\biggl(\dashint_{B_{\sqrt{T}}}|\nabla (\varfgt-\hatvarfgt)|^2\,dx_d\biggr)^\frac12.
\end{align*}
Finally, \eqref{eq:1122} and \eqref{eq:1123} yield the claim.

\step 3 Proof of \eqref{eq:1109} with $\ell=2$. 

Using $\hatomega=\omega$ on ${\rm supp}\, \eta_L$, we compute
\begin{eqnarray*}
& & D^2W_{\eta\lol,T}(\pi\lol \omega,F)H\cdot G-D^2W_{\eta\lol,T}(F)H\cdot G \\
& \stackrel{\eqref{def:DDWetaT}}{=} &
 \int_{\R}\eta\lol \left[ D^2W(\hatomega,F+ \nabla \hatvarft)(H+\nabla \hatvarfht)\cdot (G+\nabla \hatvarfgt) + DW(\hatomega,F+\nabla \hatvarft)\cdot \nabla \hatvarfght \right] dx_d\\
& &
- \int_{\R}\eta\lol \left[ D^2W(\omega,F+ \nabla \varft)(H+\nabla \varfht)\cdot (G+\nabla \varfgt) + DW(\omega,F+\nabla \varft)\cdot \nabla \varfght\right] dx_d  \\
& = &   \int_{\R}\eta_L(D^2W(\omega,F+\nabla \hatvarft)-D^2W(\omega,F+\nabla \varft))(H+\nabla \varfht)(G+\nabla \varfgt)dx_d \\
& &
+  \int_{\R} \eta_L D^2W(\omega,F+\nabla \hatvarft)(\nabla \hatvarfht - \nabla \varfht)(G+\nabla \varfgt) dx_d
\\ &&
+ \int_{\R} \eta_LD^2W(\omega,F+\nabla \hatvarft)(H+\nabla \hatvarfht)(\nabla \hatvarfgt-\nabla \varfgt)dx_d
\\ &&
+\int_{\R}\eta_L(DW(\omega, F+ \nabla\hatvarft)-DW(\omega, F+ \nabla\hatvarft))(\nabla\varfght)dx_d
\\ &&
+\int_{\R} \eta_LDW(\omega,F+\nabla\hatvarft)(\nabla \hatvarfght - \nabla\varfght)dx_d \\
& \lesssim & |H||G|\brac{\dashint_{B_{\sqrt{T}}}|\nabla \hatvarft - \nabla \varft|^2 dx_d}^{\frac{1}{2}} + |G|\brac{\dashint_{B_{\sqrt{T}}}|\nabla \hatvarfht - \nabla \varfht|^2 dx_d}^{\frac{1}{2}}\\ 
& & +|H|\brac{\dashint_{B_{\sqrt{T}}}|\nabla \hatvarfgt - \nabla \varfgt|^2 dx_d}^{\frac{1}{2}}+\dist(F,\SO d)|G||H|\\
& &+ \dist(F,\SO d)\brac{\dashint_{B_{\sqrt{T}}}|\nabla \hatvarfght - \nabla \varfght|^2 dx_d}^{\frac{1}{2}},
\end{eqnarray*}
where the last estimate follows from the properties of $W$ and \eqref{eq:393:new}, \eqref{eq:520:new} and \eqref{eq:615:new}. Finally, estimates \eqref{eq:1122}-\eqref{eq:1124} and the choice of $T$ yield the claim.

\end{proof}

\appendix

\section{Linear corrector equation}

We recall the following standard result. We state it in terms of our specific probability space, however, it holds also in a more general stationary and ergodic setting.

\begin{proposition}\label{prop:appendix1}
Let $(\Omega,\mathcal{S},\mathbb{P})$ satisfy \ref{stationarity}-\ref{ass:ergodicity} and for $T>0$ we consider a measurable function $\mathbb{L}_T : \Omega \times \R^d\to L(\R^{d\times d}, \R^{d\times d})$. We assume that there exists $c>0$ such that for all $T>0$ and for $\mathbb{P}$-a.a. $\omega \in \Omega$, it holds
\begin{align}\label{eq:3069:2}
\begin{split}
\frac{1}{c} \int_{\R^d}|\nabla \eta|^2 dx & \leq \int_{\R^d} \mathbb{L}_T(\omega,x) \nabla \eta \cdot \nabla \eta dx  \quad \text{for all }\eta \in C^{\infty}_c(\R^d), \\
|\mathbb{L}_{T}(\omega(x_d))| & \leq c.
\end{split}
\end{align} 

\begin{enumerate}[label = (\roman*)]
\item Then, for any $g_T \in L^{\infty}(\Omega \times \R^d; \R^{d \times d})$, there exists $\varphi^T \in H^1_{\mathrm{uloc}}(\R^d; \R^d)$ a unique  solution to 
\begin{equation*}
\frac{1}{T}\varphi^T - \dive \brac{\mathbb{L}_T(\omega,x)\nabla \varphi^T} = \dive \brac{g_T(\omega, x)} \quad \text{in }\R^d.
\end{equation*}
 
\item We additionally assume:
\begin{enumerate}[label = (\alph*)]
\item There exists $\mathbb{L}: \Omega \times \R^d \to L(\R^{d\times d}, \R^{d \times d})$ satisfying the analogous assumption to \eqref{eq:3069:2}. There exists $g \in L^{\infty}(\Omega \times \R^{d}; \R^{d\times d})$.
\item The mapping $(\omega,x)\mapsto \brac{\mathbb{L}_T(\omega,x), \mathbb{L}(\omega,x),g_T(\omega,x),g(\omega,x)}$ is a stationary random field.
\item It holds
\begin{align*}
\limsup_{T \to \infty} \norm{g_T}_{L^{\infty}(\Omega\times \R^{d})} & <\infty,\\
\lim_{T\to \infty}\brac{\mathbb{L}_T(\omega,x), g_T(\omega,x)} & = \brac{\mathbb{L}(\omega,x), g(\omega,x)} \quad \text{a.e.}
\end{align*}

\end{enumerate}
Then, there exists $\varphi \in H^1_{\rm uloc}(\R^d; \R^d)$ a unique solution to 
\begin{equation*}
-\dive \brac{\mathbb{L}(\omega,x)\nabla \varphi } = \dive\brac{g(\omega,x)} \quad \text{in }\R^d
\end{equation*}
with the following properties
\begin{equation*}
\dashint_{\Box} \varphi dx = 0, \quad \nabla \varphi \text{ is a stationary random field,} \quad \erwartung{\nabla \varphi} =0, \quad \erwartung{|\nabla \varphi|^2}< \infty.
\end{equation*}
Moreover, it holds
\begin{align*}
\limsup_{R\to \infty}\frac{1}{R^2} \dashint_{\Box_{R}}|\varphi|^2 dx = 0,\\
\lim_{T\to \infty}\erwartung{\dashint_{\Box}|\nabla \varphi_{T}- \nabla \varphi|^2 dx} = 0. 
\end{align*}
\end{enumerate}
\end{proposition}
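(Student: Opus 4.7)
Part~(i) follows pathwise from Lemma~\ref{lemma:222}(ii) applied to the linear operator $A(\omega,x,G):=\mathbb{L}_T(\omega,x)G$, which by \eqref{eq:3069:2} belongs to $\mathcal A_{1/c}^{c}$ uniformly in $(\omega,x)$ (pointwise ellipticity follows from the global coercivity by localization/Lebesgue differentiation), with right-hand side $g_T(\omega,\cdot)\in L^\infty(\R^d;\R^{d\times d})$ and $f\equiv 0$. The existence, uniqueness and the weighted Caccioppoli-type estimate are then immediate.

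For part~(ii) I follow the classical Yosida-regularization strategy. First, linearity, the uniqueness from~(i), and the joint stationarity of $(\mathbb{L}_T,g_T)$ imply the shift-covariance $\varphi^T(\tau_z\omega,x-z)=\varphi^T(\omega,x)$; hence $\varphi^T$ and $\nabla\varphi^T$ are stationary, and $\erwartung{\nabla\varphi^T}=0$ (since $x\mapsto\erwartung{\varphi^T(\cdot,x)}$ is constant). Taking expectations in \eqref{eq:378} and using stationarity yields the uniform bound
\[
\erwartung{\tfrac{1}{T}|\varphi^T(\cdot,0)|^2}+\erwartung{|\nabla\varphi^T(\cdot,0)|^2}\leq C,
\]
so along a subsequence $T_k\to\infty$ one has $\nabla\varphi^{T_k}\rightharpoonup\chi$ weakly in $L^2(\Omega\times\Box;\R^{d\times d})$, with $\chi$ stationary, curl-free and of vanishing expectation. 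A standard Birkhoff/Kozlov-type argument then reconstructs a random field $\varphi\in H^1_{\mathrm{loc}}(\R^d;\R^d)$, uniquely fixed by $\dashint_\Box\varphi\,dx=0$, whose distributional gradient equals $\chi$ pathwise and which satisfies the sublinearity $\lim_{R\to\infty}R^{-2}\dashint_{\Box_R}|\varphi|^2\,dx=0$ $\mathbb P$-a.s.

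To identify the limiting equation, I pass to the limit in the weak form against $\psi\in C_c^\infty(\R^d;\R^d)$: pointwise-a.e.\ convergence $\mathbb{L}_{T_k}\to\mathbb{L}$ together with uniform boundedness yields by dominated convergence that $\mathbb{L}_{T_k}^{\top}\nabla\psi\to\mathbb{L}^{\top}\nabla\psi$ strongly in $L^2(\Omega\times\mathrm{supp}\,\psi)$, and combining this with the weak convergence of $\nabla\varphi^{T_k}$ handles the diffusion term; the $g_{T_k}$-term is treated analogously, while the zeroth-order term $\tfrac{1}{T_k}\varphi^{T_k}$ vanishes thanks to the uniform bound above. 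Uniqueness of $\varphi$ follows from ergodicity (\ref{ass:ergodicity}): the difference of two candidates has a stationary, curl-free gradient with zero mean that solves a homogeneous uniformly elliptic equation, so testing against itself forces it to vanish. In particular the whole family, not just a subsequence, converges.

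Finally, strong convergence $\erwartung{\dashint_\Box|\nabla\varphi^T-\nabla\varphi|^2\,dx}\to 0$ is obtained by the standard energy argument: testing the $\varphi^T$-equation against $\varphi^T$ (with a smooth cut-off, using the sublinearity of $\varphi$ to dispose of boundary contributions) gives convergence of quadratic forms $\erwartung{\mathbb{L}_T\nabla\varphi^T\cdot\nabla\varphi^T}\to\erwartung{\mathbb{L}\nabla\varphi\cdot\nabla\varphi}$, which combined with weak convergence and coercivity of $\mathbb{L}$ upgrades to strong convergence of gradients. \textbf{The main technical hurdle} is the reconstruction step: producing from the weak, stationary, curl-free limit $\chi$ an honest sublinear primitive $\varphi$ for which the corrector equation holds $\mathbb P$-a.s.\ pathwise (not merely in expectation) requires careful use of ergodicity and a Birkhoff-type sublinearity lemma, and coordinating the pathwise weak-strong convergence of $\mathbb{L}_{T_k}\nabla\varphi^{T_k}$ with the realization-wise formulation of the limiting PDE.
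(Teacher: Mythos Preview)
The paper does not give a proof of this proposition: it simply states ``The proof of this proposition is a simple variation of \cite{Neukamm_Lecture_notes} and for this reason we omit it.'' Your sketch follows the standard Yosida/massive-approximation scheme (uniform energy bounds for $\varphi^T$, weak compactness in $L^2(\Omega)$ for the stationary gradients, reconstruction of a sublinear primitive via ergodicity, passage to the limit by weak--strong pairing, and upgrade to strong convergence via the energy identity), which is precisely the method those lecture notes develop; so your approach is correct and coincides with the intended one.

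Two small remarks. First, in part~(i) your reduction to Lemma~\ref{lemma:222}(ii) via ``pointwise ellipticity from global coercivity by Lebesgue differentiation'' is more than is needed: the exponential-weight argument behind Lemma~\ref{lemma:222}(ii) only uses the integral G\aa rding inequality \eqref{eq:3069:2} when tested against $\eta\varphi$, so you can invoke it directly without passing through pointwise strong ellipticity (which for systems does not follow from \eqref{eq:3069:2} in general). Second, in the strong-convergence step the clean way to obtain the energy identity is to work in the probability space: since $\varphi^T$ itself is stationary and in $L^2(\Omega)$ (thanks to the zeroth-order term), the equation lifts to $L^2(\Omega)$ and you may test directly with $\varphi^T$ there, avoiding the cut-off gymnastics in physical space; the sublinearity of $\varphi$ is then only needed to identify the pathwise PDE, not for the energy argument. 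These are cosmetic points---your outline is sound.
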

The proof of this proposition is a simple variation of \cite{Neukamm_Lecture_notes} and for this reason we omit it.

\section*{Acknowledgments}
SN and MV acknowledge funding by the Deutsche Forschungsgemeinschaft (DFG, German
Research Foundation) – project number 405009441.

\bibliographystyle{alpha}
\bibliography{refs}
\end{document}